\newtheorem{proposition}{Proposition}
\newtheorem{lemma}{Lemma}
\newtheorem{definition}{Definition}
\newtheorem{theorem}{Theorem}
\newtheorem{corollary}{Corollary}
\newcommand{\T}{\mathcal{T}}
\newcommand{\tT}{\tilde\T}
\newcommand{\mH}{\mathcal{H}}  
\newcommand{\R}{\mathbb{R}}
\newcommand{\Z}{\mathbb{Z}}
\newcommand{\C}{\mathbb{C}}
\newcommand{\N}{\mathbb{N}}
\newcommand{\tx}{\tilde{x}}
\newcommand{\mr}{\mathbf{r}}  
\newcommand{\me}{\mathbf{e}}
\newcommand{\mk}{\mathbf{k}}
\newcommand{\arctanh}{{\rm arctanh}}
\newcommand{\D}{\mathcal{D}}
\newcommand{\Oct}{\mathcal{O}}
\newcommand{\DR}{\mathcal{D}\times\R^+_*}
\newcommand{\bz}{{\bf z}}
\newcommand{\GL}{\rm{GL(2,\R)}}
\newcommand{\SOS}[1]{\ifthenelse{\boolean{DisplaySOS}}{{\textcolor{red}{\bf[#1]}}}{}}
\title{Bifurcation of hyperbolic planforms}
\author[1,2]{Pascal Chossat}
\author[1]{Gr\'egory Faye}
\author[1]{Olivier Faugeras}
\affil[1]{\small NeuroMathComp Laboratory, INRIA, Sophia Antipolis, CNRS, ENS Paris, France}
\affil[2]{\small Dept. of Mathematics, JAD Laboratory, CNRS and University of Nice Sophia-Antipolis, Parc Valrose, 06108 Nice Cedex 02, France}
\begin{document}

\maketitle

\begin{abstract}
Motivated by a model for the perception of textures by the visual cortex in primates, we analyse the bifurcation of periodic patterns for nonlinear equations describing the state of a system defined on the space of structure tensors, when these equations are further invariant with respect to the isometries of this space. We show that the problem reduces to a bifurcation problem in the hyperbolic plane $\D$ (Poincar\'e disc). We make use of the concept of periodic lattice in $\D$ to further reduce the problem to one on a compact Riemann surface $\D/\Gamma$, where $\Gamma$ is a cocompact, torsion-free Fuchsian group. The knowledge of the symmetry group of this surface allows to carry out the machinery of equivariant bifurcation theory. Solutions which generically bifurcate are called "H-planforms", by analogy with the "planforms" introduced for pattern formation in Euclidean space. This concept is applied to the case of an octagonal periodic pattern, where we are able to classify all possible H-planforms satisfying the hypotheses of the Equivariant Branching Lemma. These patterns are however not straightforward to compute, even numerically, and in the last section we describe a method for computation illustrated with a selection of images of octagonal H-planforms.
\end{abstract}

{\noindent \bf Keywords:} 
Equivariant bifurcation analysis; neural fields; Poincar\'e disc; periodic lattices; hyperbolic planforms; irreductible representations; Laplace-Beltrami operator.\\

\section{Introduction}\label{section:introduction}

In a recent paper \cite{chossat-faugeras:09} a model for the visual perception of textures by the cortex was proposed, which assumes that populations of neurons in each so-called "hypercolumn" of the visual cortex layer V1 are sensitive to informations carried by the structure tensor of the image (in fact, of a small part of it since each hypercolumn is dedicated to a specific part of the visual field). The structure tensor is a $2\times 2$ positive definite, symmetric matrix, the eigenvalues of which characterize to some extent the image geometric properties like the presence and position of an edge, the contrast, etc. This led to write the average membrane potential $V$ of the populations of neurons in a hypercolumn as a function of the structure tensor (and time). The equations that govern the evolution of the average membrane potential are
\begin{equation} \label{eq:neuralmass}
\frac{\partial V}{\partial \tau}(\T,\tau)=-\alpha V(\T,\tau)+\int_\mH w(\T,\T')S(V(\T',\tau))\,d\T'+I(\T,\tau)
\end{equation}
where $\T,~\T'\in SDP(2)$ (the space of structure tensors), $S$ is a smooth function $\R\rightarrow\R$ of sigmoid type ($S(x)\rightarrow \pm 1$ as $x\rightarrow \pm\infty$), $I$ corresponds to some input signal coming from different brain areas such as the thalamus and $w(\T,\T')$ is a function which expresses the interaction between the populations of neurons
of types $\T$ and $\T'$ in the hypercolumn. We can introduce a distance on the set of structure tensors by noting that it has the structure of a homogeneous space for the group of invertible matrices $\GL$ acting by coordinate change on quadratic forms associated with the structure tensors \cite{weil:57}: for any $\T\in SDP(2)$ and $G \in \GL$, we define $G\cdot \T =~^tG~\T~G$. This induces a Riemannian metric on the tangent space: $ g_\T( A, B )={\rm tr}(\T^{-1} A \T^{-1} B)$ and the corresponding distance in $SDP(2)$ is given by (see \cite{moakher:05})
\begin{equation}  \label{distanced0}
d_0(\T_1,\T_2) = \sqrt{\log^2 \lambda_1+\log^2 \lambda_2}
\end{equation}
where $\lambda_1$, $\lambda_2$ are the eigenvalues of $\T_1^{-1}\T_2$. Because of its invariance this distance is biologically plausible since the neurons have no obvious way of knowing in which coordinate system are expressed the components of the structure tensors and this leads to the assumption
that the function $w$ is insensitive to coordinate changes in $SDP(2)$. This implies that $w$ is in fact a function of the distance $d_0$: 
\begin{equation} \label{Invariance hypothesis} 
w(\T,\T') = f(d_0(\T,\T'))
\end{equation}
As a consequence, the equation (\ref{eq:neuralmass}) is invariant under the action of the isometry group $\GL$. 

Assume now that there is no input: $I=0$, and that $V=0$ is a solution of (\ref{eq:neuralmass}) (we can assume w.l.o.g. that $S(0)=0$). If the slope $\mu=S'(0)$ is small enough, small perturbations of this basic state are damped to 0 and the basic state is stable. We can however expect that if $\mu$ exceeds a critical value an instability will grow and lead to a new state which need not be invariant under the action of $\GL$. In other word we expect {\em pattern formation} through spontaneous symmetry breaking, a phenomenon which has been widely studied in other contexts in the last 40 years (see \cite{hoyle:06} for a review). From the mathematical point of view the theoretical framework is the well-established bifurcation theory with symmetry \cite{golubitsky-stewart-etal:88}, \cite{chossat-lauterbach:00}. There is however a basic difference between the  problem at hand and previous studies in pattern formation and symmetry-breaking bifurcation: the symmetries (isometries) are not Euclidean. Moreover the group $\GL$ being non compact, the spectrum of the equivariant linear operators is continuous and eigenvalues have infinite multiplicity in general. This is also true when the symmetry group is the group of displacements and reflections in Euclidean space. In this case a rational approach is to resctrict the analysis to the bifurcation of patterns which are periodic (in space) with a period equal to $2\pi/k$ where $k$ is the wave number of the most unstable Fourier modes. This hypothesis is supported by observations (to some extent) and it provides a framework which is suitable for the bifurcation analysis. Namely, the spectrum of operators restricted to functions which are invariant on a periodic lattice is discrete and consists of eigenvalues with finite multiplicity. This allows to fully exploit the power of equivariant bifurcation theory and to describe the different types of bifurcated solutions by their (remaining) symmetries. 

We would like to proceed in a similar way for equation (\ref{eq:neuralmass}), that is to consider solutions which are periodic in the space of structure tensors (one has to give a precise meaning to this statement) and then apply equivariant bifurcation methods to describe the set of solutions. We are not addressing here the question of the relevance of looking for such solutions, neither from the neurophysiological point of view, nor even from the point of view of their stability under perturbations which do not have the same periodicity (or no periodicity at all). Let us mention the fact that in \cite{chossat-faugeras:09} we identified families of subgroups of the group of isometries of the set of structure tensors that naturally arose from the analysis of the retinal input to the hypercolumns in visual area V1. These subgroups, which we called neuronal Fuchsian groups, have the property that the membrane potential functions are invariant under their action. This is one example of the neurophysiological relevance of looking for solutions of \eqref{eq:neuralmass} that are periodic in this sense but it certainly does not give the final answer: These difficult questions will be addressed subsequently. Let us point out that, on the other hand, a classification of possible periodic patterns should be largely independent of the model equations as long as these equations share some basic properties (like $\GL$ invariance). In this paper we shall therefore not focus on equation (\ref{eq:neuralmass}) and our results would apply equally to other models such as reaction-diffusion or Swift-Hohenberg equations with Laplace-Beltrami operator in $SDP(2)$). 

The structure of the paper is as follows:
\begin{enumerate}
\item In section \ref{section:periodiclattices} we introduce the necessary basic material from hyperbolic geometry. Periodic lattices and functions are defined in $SDP(2)$ and it will be shown that the problem can be decoupled and reduced to looking for periodic patterns in the hyperbolic plane (we shall work with its Poincar\'e disc representation $\D$). 
\item In section \ref{section:bifurcation} we set the bifurcation problem for periodic patterns in $\D$. Applying the equivariant branching lemma \cite{golubitsky-stewart-etal:88} (or isolated stratum principle in the variational case), this leads us to defining {\em hyperbolic planforms}, which are the hyperbolic counterparts of the planforms defined in the Euclidean case (see \cite{dionne-golubitsky:92}).
\item In section \ref{section:octagon} the methods of section \ref{section:bifurcation} are applied to the case of a regular octagonal pattern for which we can describe all planforms which result from the applicaiton of the equivariant branching lemma. 
\item In section \ref{section:computing} we compute eigenvalues and eigenfunctions of the Laplace-Beltrami operator in the hyperbolic octagon in order to exhibit {\em hyperbolic planforms} which satisfy certain isotropy conditions.
\end{enumerate}

\section{Periodic lattices and functions in the space of structure tensors}\label{section:periodiclattices}

\subsection{The space SDP(2)}

The following decomposition will prove to be very convenient. Let $\xi^2$ be the determinant of $\T\in SDP(2)$. We set $\xi>0$ by convention.  Writing $\T=\xi\T'$, we see that $SDP(2)=\R^+_*\times SSDP(2)$ where $SSDP(2)$ denotes the subspace of tensors with determinant 1. In the open cone $SDP(2)$ the surface $SSDP(2)$ is a hyperboloid sheet and it can be shown that it carries a metric induced by the metric $g$ which is just the usual metric of the hyperbolic plane. The isometry group of displacements in $SSDP(2)$ is the special linear group $SL(2, \R)$, and indeed we may also write the group of orientation preserving isometries $\rm{GL^+(2,\R)}=SL(2,\R)\times \R^+_*$ acting on a tensor $\T=(\T',\xi)$ by
\begin{equation} \label{action}
(\Gamma,\alpha)\cdot (\T',\xi) = ( ~^t\Gamma \T'\Gamma,\alpha^2\xi).
\end{equation}

We now identify $SDP(2)$  with the "half" open cylinder $\DR$, where $\D$ is the Poincar\'e disc, through the following change of variables. \\ Let $\T=\left( \begin{array}{cc}
	 x_1 & x_3 \\
	 x_3  & x_2
	\end{array} \right)$ with $x_1>0$ and $x_1x_2-x_3^2>0$. We set 
$$x_i=z_3\tilde x_i,~i=1,2,3,~z_3>0,~\tx_1\tx_2-\tx_3^2=1.$$
Now the hyperbolic plane $SSDP(2)$ is further identified with $\D$ through the change of coordinates
\begin{equation}
 \left\{
\begin{array}{lcl}
\tx_1 & = & \frac{(1+z_1)^2+z_2^2}{1-z_1^2-z_2^2}\\
\tx_2 & = & \frac{(1-z_1)^2+z_2^2}{1-z_1^2-z_2^2}\\
\tx_3 & = & \frac{2z_2}{1-z_1^2-z_2^2}
\end{array}
\right.
\end{equation}
These formulas define a diffeomorphism
\begin{equation} \label{diffeoTheta}
\Theta:~(x_1,x_2,x_3)\in SDP(2) \mapsto (z_1,z_2,z_3)\in \D\times\R^+_*
\end{equation}
The group of direct isometries (displacements) in $\D\times\R^+_*$ is $SU(1,1)\times\R^+_*$ where $SU(1,1)=\left( \begin{array}{cc}
	 \alpha & \beta \\
	 \bar\beta  & \bar\alpha
	\end{array} \right)$ with $|\alpha|^2-|\beta|^2=1$. Including orientation reversing isometries would replace $SU(1,1)$ by $U(1,1)$. 

\begin{proposition}
The metric carried to $\D\times\R^+_*$ by $\Theta$ is (up to a normalization coefficient)
\begin{equation}
g_1(z_1,z_2,z_3) = \frac{4(dz_1^2 + dz_2^2)}{(1-z_1^2-z_2^2)^2} + \frac{dz_3^2}{z_3^2}
\end{equation}
\end{proposition}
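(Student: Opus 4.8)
The plan is to push the invariant metric $g_\T(A,B)={\rm tr}(\T^{-1}A\,\T^{-1}B)$ forward through $\Theta$ by working directly in the multiplicative decomposition $\T=z_3\tilde\T$ with $\det\tilde\T=1$, and to reduce the computation on the disc factor to a single point by an equivariance argument.

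\textbf{Step 1: splitting off the $z_3$ factor.} Writing $\T=z_3\tilde\T$ we have $\T^{-1}=z_3^{-1}\tilde\T^{-1}$ and $d\T=dz_3\,\tilde\T+z_3\,d\tilde\T$, hence
\[
\T^{-1}d\T=\frac{dz_3}{z_3}\,I_2+\tilde\T^{-1}d\tilde\T .
\]
Since $\det\tilde\T\equiv 1$, Jacobi's formula gives ${\rm tr}(\tilde\T^{-1}d\tilde\T)=d\log\det\tilde\T=0$, so the cross term in ${\rm tr}\big((\T^{-1}d\T)^2\big)$ disappears and
\[
g_\T(d\T,d\T)=2\,\frac{dz_3^2}{z_3^2}+{\rm tr}\big((\tilde\T^{-1}d\tilde\T)^2\big).
\]
Thus, after absorbing the global factor $2$ in the normalization, $g$ splits as $dz_3^2/z_3^2$ plus a metric on $SSDP(2)$ that depends only on $(z_1,z_2)$; it remains to identify this second term with $4(dz_1^2+dz_2^2)/(1-z_1^2-z_2^2)^2$.

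\textbf{Step 2: the disc factor by symmetry.} Rather than substituting the explicit formulas for $\tilde x_i(z_1,z_2)$ into ${\rm tr}\big((\tilde\T^{-1}d\tilde\T)^2\big)$, I would argue as follows. For $\Gamma\in SL(2,\R)$ the map $\tilde\T\mapsto{}^t\Gamma\tilde\T\Gamma$ conjugates $\tilde\T^{-1}d\tilde\T$ by $\Gamma$, hence leaves ${\rm tr}\big((\tilde\T^{-1}d\tilde\T)^2\big)$ invariant, and this action is transitive on $SSDP(2)$. Under $\Theta$ it becomes precisely the standard $SU(1,1)$ action on $\D$ by M\"obius maps — this is exactly the classical $SL(2,\R)\cong SU(1,1)$ conjugacy that the change of coordinates $(z_1,z_2)\mapsto(\tilde x_1,\tilde x_2,\tilde x_3)$ encodes — which is also transitive and isometric for the standard metric $4(dz_1^2+dz_2^2)/(1-z_1^2-z_2^2)^2$. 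Both metrics on $\D$ are therefore $SU(1,1)$-invariant; since the isotropy subgroup of the origin $z_1=z_2=0$ (i.e. $\tilde\T=I_2$) acts on $T_0\D\cong\R^2$ as the rotation group, and any two rotation-invariant inner products on $\R^2$ are proportional, the two metrics differ by a global constant, fixed by evaluation at the origin.

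\textbf{Step 3: normalization at the origin.} The explicit formulas give $\tilde x(0,0)=(1,1,0)$, so $\tilde\T=I_2$ there, and differentiating them at $(z_1,z_2)=(0,0)$ yields
\[
d\tilde\T\big|_{0}=\begin{pmatrix}2\,dz_1 & 2\,dz_2\\[2pt] 2\,dz_2 & -2\,dz_1\end{pmatrix},
\qquad
{\rm tr}\big((\tilde\T^{-1}d\tilde\T)^2\big)\big|_{0}=8\,(dz_1^2+dz_2^2),
\]
whereas the claimed metric equals $4(dz_1^2+dz_2^2)$ at the origin. The ratio is the constant $2$, the same factor already sitting in front of $dz_3^2/z_3^2$ in Step 1, so dividing $g$ by $2$ produces exactly $g_1$, which proves the proposition. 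The main obstacle is the identification in Step 2 of the $SL(2,\R)$-action on $SSDP(2)$ with the standard $SU(1,1)$ action on $\D$ under $\Theta$; anyone unwilling to invoke that classical conjugacy must instead carry out the (elementary but tedious) direct substitution of $\tilde x_i(z_1,z_2)$ into ${\rm tr}\big((\tilde\T^{-1}d\tilde\T)^2\big)$, while Steps 1 and 3 are routine.
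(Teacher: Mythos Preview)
Your argument is correct, and it takes a genuinely different route from the paper. The paper proceeds by brute force: it first writes out the metric $g$ in the ambient coordinates $(x_1,x_2,x_3)$ on $SDP(2)$ (obtaining the six-term quadratic form in $dx_1,dx_2,dx_3$ with coefficients depending on the $x_i$ and on $z_3=\sqrt{\det\T}$), and then pulls this form back through the explicit change of variables $\Theta$ to get $g_1$. Your approach replaces that computation by a structural one: you use the multiplicative splitting $\T=z_3\tilde\T$ and Jacobi's formula to peel off the $\R^+_*$ factor cleanly, and then you exploit $SU(1,1)$-invariance together with the irreducibility of the isotropy action at $0$ to reduce the identification of the disc metric to a single evaluation at the origin. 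What the paper's approach buys is self-containedness: nothing is invoked beyond elementary calculus, so a reader who does not want to take the $SL(2,\R)\!\leftrightarrow\! SU(1,1)$ intertwining on faith can simply grind through. What your approach buys is insight and economy: the diagonal splitting and the role of the group action are made transparent, and the only honest computation left is the Jacobian of $\Theta$ at a single point. You have correctly flagged the one soft spot (Step~2), and your fallback---direct substitution---is exactly what the paper does globally.
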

\begin{proof}
The fact that the decomposition is diagonal follows directly from the direct product decomposition and group action (\ref{action}). In order to compute the precise expression for the metric, let us first express the form $g$ in $SDP(2)$ in suitable coordinates of the tangent space (which is the space of symmetric $2\times 2$ matrices). Such a basis is given, at any tensor $\T$, by
\[
\frac{\partial}{\partial x_1}=
	\left[
	\begin{array}{cc}
	 1 & 0\\
	 0  & 0
	\end{array}
	\right] \quad 
\frac{\partial}{\partial x_2}=
	\left[
	\begin{array}{cc}
	 0 & 0\\
	 0  & 1
	\end{array}
	\right] \quad
\frac{\partial}{\partial x_3}=
	\left[
	\begin{array}{cc}
	 0 & 1\\
	 1  & 0
	\end{array}
\right]
\]
Then a straightforward calculation shows that
\begin{eqnarray*}
g(x_1,x_2,x_3) &=& z_3^{-4}[x_2^2dx_1^2 + 2x_3^2 dx_1dx_2 - 4x_2x_3dx_1dx_3  \\ & & + x_1^2dx_2^2 - 4x_1x_3 dx_2dx_3 + 2(x_1x_2+x_3^2)dx_3^2].
\end{eqnarray*}
The proposition follows by applying the pull-back $\Theta^*$ to this form. 
\end{proof}

\begin{corollary}
The volume element in the $z_j$ coordinates is 
\begin{equation}\label{eq:volume}
dV = \frac{4\, dz_1\,dz_2\,dz_3}{(1-z_1^2-z_2^2)^{2}\,z_3}
\end{equation}
\end{corollary}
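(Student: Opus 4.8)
The plan is to obtain the volume element directly from the expression of the metric $g_1$ given in the preceding proposition, using the general fact that for a Riemannian metric written in local coordinates as $g = \sum_{i,j} g_{ij}\, du^i\, du^j$, the associated volume form is $dV = \sqrt{\det(g_{ij})}\; du^1\wedge\cdots\wedge du^n$. So the proof is essentially the computation of this determinant for the explicit diagonal metric $g_1(z_1,z_2,z_3)$.

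First I would read off the metric tensor from the proposition. Since $g_1$ is diagonal in the coordinates $(z_1,z_2,z_3)$, its matrix is
\begin{equation*}
(g_{ij}) = \mathrm{diag}\!\left( \frac{4}{(1-z_1^2-z_2^2)^2},\ \frac{4}{(1-z_1^2-z_2^2)^2},\ \frac{1}{z_3^2} \right),
\end{equation*}
so that
\begin{equation*}
\det(g_{ij}) = \frac{16}{(1-z_1^2-z_2^2)^4}\cdot\frac{1}{z_3^2}, \qquad \sqrt{\det(g_{ij})} = \frac{4}{(1-z_1^2-z_2^2)^2\, z_3}.
\end{equation*}
This gives $dV = \dfrac{4\, dz_1\, dz_2\, dz_3}{(1-z_1^2-z_2^2)^2\, z_3}$, which is exactly \eqref{eq:volume}.

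There is no real obstacle here: the only subtlety is that the diagonal (product) structure of $g_1$ has already been established in the proposition, so the determinant factorizes cleanly and no cross terms need to be tracked. If one wanted to be slightly more self-contained, one could alternatively remark that the volume element is multiplicative under the Riemannian product decomposition $\D\times\R^+_*$: the hyperbolic area element on the Poincar\'e disc with metric $\tfrac{4(dz_1^2+dz_2^2)}{(1-z_1^2-z_2^2)^2}$ is the classical $\tfrac{4\, dz_1\, dz_2}{(1-z_1^2-z_2^2)^2}$, while the one-dimensional factor with metric $dz_3^2/z_3^2$ contributes $dz_3/z_3$; the product of the two yields the stated formula. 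Either route is a one-line verification once the Proposition is in hand, so I would simply present the determinant computation.
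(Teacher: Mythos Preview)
Your proposal is correct and is exactly the standard computation the paper has in mind: the corollary is stated without proof precisely because it follows immediately from the diagonal form of $g_1$ in the preceding proposition via $dV=\sqrt{\det(g_{ij})}\,dz_1\,dz_2\,dz_3$. Your alternative remark about the product structure of the volume element is also in line with the spirit of the paper's decomposition $\D\times\R^+_*$.
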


\begin{corollary} \label{cor:laplace-beltrami}
The Laplace -Beltrami operator in $\DR$ in $z_j$ coordinates is
\begin{equation}\label{eq:Laplace}
\triangle = \frac{(1-z_1^2-z_2^2)^2}{4}\left( \frac{\partial^2}{\partial z_1^2}+\frac{\partial^2}{\partial z_2^2}\right) + z_3\frac{\partial}{\partial z_3} + z_3^2\frac{\partial^2}{\partial z_3^2} 
\end{equation}
We note $\triangle_D$ the first term on the r.h.s.
\end{corollary}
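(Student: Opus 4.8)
The plan is to obtain the Laplace--Beltrami operator directly from the metric $g_1$ computed in the Proposition, using the standard coordinate formula
\[
\triangle u = \frac{1}{\sqrt{|g_1|}}\,\partial_i\!\left(\sqrt{|g_1|}\,g_1^{ij}\,\partial_j u\right),
\]
where $|g_1|$ is the determinant of the metric tensor and $(g_1^{ij})$ its inverse. The first step is to read off these ingredients from the diagonal form of $g_1$: the metric coefficients are $g_{11}=g_{22}=4/(1-z_1^2-z_2^2)^2$ and $g_{33}=1/z_3^2$, so $|g_1| = 16/\bigl[(1-z_1^2-z_2^2)^4 z_3^2\bigr]$, hence $\sqrt{|g_1|} = 4/\bigl[(1-z_1^2-z_2^2)^2 z_3\bigr]$, which is exactly the density appearing in the volume element $dV$ of the preceding corollary (so this step is essentially free once that corollary is in hand). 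The inverse metric is likewise diagonal with entries $g^{11}=g^{22}=(1-z_1^2-z_2^2)^2/4$ and $g^{33}=z_3^2$.

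Next I would plug these into the formula term by term. Because the metric splits as a product of the hyperbolic disc factor and the $\R^+_*$ factor, and because $\sqrt{|g_1|}$ itself factors as (function of $z_1,z_2$) times (function of $z_3$), the operator decouples into $\triangle = \triangle_D + \triangle_{z_3}$ with no cross terms; this is really just the observation already used to get the diagonal metric. For the $z_3$ part one computes
\[
\frac{1}{\sqrt{|g_1|}}\,\partial_{z_3}\!\left(\sqrt{|g_1|}\, z_3^2\, \partial_{z_3} u\right)
= z_3 \,\partial_{z_3}\!\left(z_3^{-1} z_3^2 \,\partial_{z_3} u\right)
= z_3\,\partial_{z_3}(z_3\,\partial_{z_3} u) = z_3\,\partial_{z_3}u + z_3^2\,\partial_{z_3}^2 u,
\]
matching the last two terms of \eqref{eq:Laplace}. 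For the disc part, writing $\rho = 1 - z_1^2 - z_2^2$, one has $\sqrt{|g_1|}\,g^{11}$ independent of $z_1,z_2$ up to the shared $z_3$-density, so the derivatives of the density cancel against the $1/\sqrt{|g_1|}$ prefactor and one is left with $\triangle_D u = \tfrac{\rho^2}{4}(\partial_{z_1}^2 u + \partial_{z_2}^2 u)$, which is the classical form of the hyperbolic Laplacian on the Poincar\'e disc; this recovers the first term.

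I do not expect any genuine obstacle here: the computation is routine once the metric from the Proposition is available, and the product structure kills the only terms (the cross terms and the derivative-of-density terms in the disc variables) that could make it messy. The one place to be careful is bookkeeping with the factor $\rho = 1-z_1^2-z_2^2$ — its derivatives $\partial_{z_1}\rho = -2z_1$ etc.\ do enter the intermediate expression $\partial_{z_1}(\rho^2 \cdot \rho^{-2} \cdot \partial_{z_1}u)$, and one must verify explicitly that they cancel rather than assume it. So the main ``work,'' such as it is, is confirming that the $\partial_{z_i}(\sqrt{|g_1|}\,g^{ii})$ contributions vanish, after which the stated formula for $\triangle$ and the identification of $\triangle_D$ follow immediately.
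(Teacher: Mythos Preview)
Your proposal is correct and is exactly the computation the paper leaves implicit: the corollary is stated without proof, following immediately from the metric $g_1$ of the Proposition (and the volume element already recorded), so applying the standard formula $\triangle u = |g_1|^{-1/2}\partial_i(|g_1|^{1/2}g_1^{ij}\partial_j u)$ is the intended route. Your observation that $\sqrt{|g_1|}\,g^{11}=1/z_3$ is constant in $(z_1,z_2)$ is precisely why no first-order terms appear in the disc part, so the ``careful bookkeeping'' you flag is in fact trivial here.
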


Let us now compute the distance $d$ in $\DR$. We set $\bz=(z_1,z_2,z_3)$ and $\bz'=(z'_1,z'_2,z'_3)$. Then
\begin{proposition}
\begin{equation}\label{eq:distance}
d(\bz,\bz') = \sqrt{(d_2((z_1,z_2),(z'_1,z'_2))^2 + \log^2(\frac{z_3}{z'_3})}
\end{equation}
where $d_2$ denotes the distance in the Poincar\'e disc. 
\end{proposition}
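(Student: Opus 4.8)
The plan is to exploit that, by the previous Proposition, the metric $g_1$ on $\DR$ is a Riemannian \emph{product}: $g_1 = g_\D \oplus g_{\R^+_*}$, where $g_\D = 4(dz_1^2+dz_2^2)/(1-z_1^2-z_2^2)^2$ is the Poincar\'e metric on $\D$ (inducing the distance $d_2$) and $g_{\R^+_*} = dz_3^2/z_3^2$ on the $z_3$ factor. For a product of two Riemannian manifolds the geodesic distance is the Euclidean combination of the two factor distances, and I would prove this directly from the length-minimizing definition of $d$, then compute the one-dimensional factor distance explicitly.

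\emph{Lower bound.} Let $\gamma(t)=(\gamma_\D(t),\gamma_3(t))$, $t\in[0,1]$, be any piecewise-$C^1$ path from $\bz$ to $\bz'$. Since $g_1$ is block-diagonal, $\|\gamma'(t)\|_{g_1}^2 = \|\gamma_\D'(t)\|_{g_\D}^2 + \|\gamma_3'(t)\|_{g_{\R^+_*}}^2$, so
$$L(\gamma)=\int_0^1 \sqrt{\|\gamma_\D'(t)\|^2+\|\gamma_3'(t)\|^2}\,dt = \int_0^1 \|v(t)\|_{\R^2}\,dt,\qquad v(t)=(\|\gamma_\D'(t)\|,\|\gamma_3'(t)\|).$$
By the triangle inequality for vector-valued integrals, $\|\int_0^1 v\| \le \int_0^1\|v\|$, and since the components of $v$ are nonnegative,
$$L(\gamma)\ \ge\ \Big\|\int_0^1 v(t)\,dt\Big\|_{\R^2} = \sqrt{L(\gamma_\D)^2+L(\gamma_3)^2}\ \ge\ \sqrt{d_2((z_1,z_2),(z_1',z_2'))^2 + d_{\R^+_*}(z_3,z_3')^2}.$$
Taking the infimum over $\gamma$ gives $d(\bz,\bz')\ge \sqrt{d_2^2+d_{\R^+_*}^2}$.

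\emph{Upper bound and the factor distance.} Conversely, pick unit-speed minimizing geodesics $c_\D$ in $(\D,g_\D)$ and $c_3$ in $(\R^+_*,g_{\R^+_*})$ joining the respective endpoints, of lengths $\ell_\D=d_2((z_1,z_2),(z_1',z_2'))$ and $\ell_3=d_{\R^+_*}(z_3,z_3')$, and set $\gamma(t)=(c_\D(\ell_\D t),c_3(\ell_3 t))$. Then $\|\gamma'(t)\|^2\equiv \ell_\D^2+\ell_3^2$, so $L(\gamma)=\sqrt{\ell_\D^2+\ell_3^2}$, whence $d(\bz,\bz')\le\sqrt{d_2^2+d_{\R^+_*}^2}$; combined with the lower bound this is an equality. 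Finally, for a path $z_3(t)$ from $a$ to $b$ in $(\R^+_*,dz_3^2/z_3^2)$ one has $\int|z_3'(t)|/z_3(t)\,dt = \int |(\log\circ z_3)'(t)|\,dt \ge |\log b-\log a|$, with equality for monotone $z_3$; hence $d_{\R^+_*}(z_3,z_3')=|\log(z_3/z_3')|$ and $d_{\R^+_*}(z_3,z_3')^2=\log^2(z_3/z_3')$. Substituting yields \eqref{eq:distance}. I would close with a remark that the identification of $(\D,g_\D)$ with the standard Poincar\'e disc (whose distance is $\arctanh$ of the usual cross-ratio-type expression, up to the normalization constant of the Proposition) is what justifies writing $d_2$ for ``the distance in the Poincar\'e disc.''

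The only step requiring genuine care — the ``main obstacle'' — is the product-distance identity, and specifically its lower-bound half, which needs the integral (Minkowski-type) triangle inequality rather than the naive triangle inequality applied factor-by-factor; once that is in hand, the remaining ingredients are the appeal to the previous Proposition and an elementary one-dimensional length computation.
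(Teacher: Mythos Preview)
Your proof is correct, but it takes a different route from the paper's. The paper does not argue from the Riemannian product structure at all; instead it goes back to the explicit eigenvalue formula \eqref{distanced0} for the distance $d_0$ in $SDP(2)$. Writing $\T=z_3\tilde\T$, $\T'=z'_3\tilde\T'$ with $\det\tilde\T=\det\tilde\T'=1$, one has $\T^{-1}\T'=\alpha\,\tilde\T^{-1}\tilde\T'$ with $\alpha=z'_3/z_3$, so if the eigenvalues of $\tilde\T^{-1}\tilde\T'$ are $\lambda,1/\lambda$ then
\[
d_0(\T,\T')^2=\log^2(\alpha\lambda)+\log^2(\alpha/\lambda)=2\log^2\alpha+2\log^2\lambda=2\log^2\alpha+d_0(\tilde\T,\tilde\T')^2,
\]
and the last term is identified with $d_2^2$. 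This is a three-line algebraic computation with no variational argument needed.

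What each approach buys: the paper's proof is shorter and exploits that the distance on $SDP(2)$ was already given in closed form; it avoids any discussion of geodesics or length minimization. Your approach is more intrinsic: it works from the product metric of the preceding Proposition and would apply verbatim to any Riemannian product, without needing the eigenvalue formula for $d_0$. The ``main obstacle'' you flag (the Minkowski-type lower bound) is a genuine point that the paper's route simply sidesteps, since there the Pythagorean splitting falls out of the eigenvalue identity rather than from a length-minimization argument.
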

\begin{proof}
We start from expression (\ref{distanced0}) which gives the distance between two tensors $\T_1$ and $\T_2$.  Let $\T=z_3\tilde T$, $\T'=z'_3\tilde T'$ and $\alpha=z_3/z'_3$. We note $\lambda$, $1/\lambda$ the eigenvalues of $\tilde T^{-1}\tilde T'$. Then
\[
 d_0(\T,\T')^2=\log^2 \alpha\lambda+\log^2 \frac{\alpha}{ \lambda}=2\log^2 \alpha+2\log^2 \lambda=2\log^2 \alpha+d_0(\tT,\tT')^2.
\]
But $d_0(\tT,\tT')=d_2((z_1,z_2),(z'_1,z'_2))$, hence the result.
\end{proof}

Note that the distance in $D$ is given by the following formula where $z$, $z'$ are complex numbers in the unit disc.
\begin{equation} \label{d2}
d_2(z,z')=2\,\arctanh \frac{|z-z'|}{|1-\overline zz'|},
\end{equation}

\subsection{Periodic lattices in $\D\times \R^{+}_{*}$} \label{section:lattices}

Let us recall first some basic properties of the group of isometries in the hyperbolic plane (which we shall always identify with the Poincar\'e disc $\D$ in the sequel).  We refer to textbooks in hyperbolic geometry for details and proofs. 
The direct isometries (preserving the orientation) in $\D$ are the elements of the special unitary group, noted ${\rm SU}(1,1)$, of $2\times 2$ Hermitian matrices with determinant equal to 1. Given
\[
\gamma =\left[
\begin{array}{cc}
\alpha & \beta \\ \overline\beta & \overline\alpha \end{array}
\right] \ \text{such that}\ |\alpha|^2-|\beta|^2=1,
\]
the corresponding isometry $\gamma$ is defined by
\begin{equation} \label{eq:motionD}
\gamma \cdot z = \frac{\alpha z+\beta}{\overline\beta z+\overline\alpha},~~z\in \D
\end{equation}
Orientation reversing isometries of $\D$ are obtained by composing any transformation (\ref{eq:motionD}) with the reflection $\kappa:~z\mapsto \overline z$. The full symmetry group of the Poincar\'e disc is therefore
$${\rm U}(1,1)={\rm SU}(1,1)\cup \kappa\cdot {\rm SU}(1,1).$$

These isometries preserve angles, however they do not transform straight lines into straight lines. Given two points $z\neq z'$ in $\D$, there is a unique geodesic passing through them: the portion in $\D$ of the circle containing $z$ and $z'$ and intersecting the unit circle at right angles. This circle degenerates to a straight line when the two points lie on the same diameter. Any geodesic uniquely defines the reflection through it. Reflections are orientation reversing, one representative being the complex conjugation: $\kappa \cdot z=\overline{z}$. 

We distinguish three different kinds of direct isometries in $\D$, according to which conjugacy class of the following one parameter subgroups it does belong:
\begin{definition} \label{def_KAN}
\[
\left\{
\begin{array}{lcl}
K & = & \{r_\varphi =\left[\begin{array}{cc} e^{i\varphi/2} & 0 \\ 0 & e^{-i\varphi/2} \end{array} \right], \quad \varphi\in S^1\}\\
&&\\
A & = & \{a_t=\left[\begin{array}{cc} \cosh t/2 & \sinh t/2 \\ \sinh t/2 & \cosh t/2 \end{array} \right], \quad t \in \R\}\\
&&\\
N & = & \{n_s=\left[\begin{array}{cc} 1+is & -is \\ is & 1-is \end{array} \right], \quad s \in \R\}
\end{array}
\right.
\]
\end{definition}
Note that $r_\varphi\cdot z=e^{i\varphi}\, z$ for $z\in \D$ and also, $a_t\cdot 0=\tanh(t)$. The elements of $A$ are sometimes called ``boosts''  \cite{balazs-voros:86}.
The following theorem is called the Iwasawa decomposition (see \cite{iwaniec:02}).
\begin{theorem}
\[
{\rm SU}(1,1)=K\cdot A\cdot N 
\]
\end{theorem}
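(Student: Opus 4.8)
The plan is to establish the decomposition $\mathrm{SU}(1,1) = K \cdot A \cdot N$ in the form: every $\gamma \in \mathrm{SU}(1,1)$ can be written uniquely as $\gamma = r_\varphi\, a_t\, n_s$. The most transparent route is the geometric one via the action on $\D$, exploiting transitivity. First I would recall that $\mathrm{SU}(1,1)$ acts transitively on $\D$ with stabilizer of the origin $0$ equal to $K$ (indeed $r_\varphi\cdot z = e^{i\varphi}z$ fixes $0$, and conversely $\gamma\cdot 0 = \beta/\overline\alpha = 0$ forces $\beta=0$, hence $\gamma\in K$). So it suffices to show that the subgroup $A\cdot N$ already acts transitively on $\D$: given $\gamma$, pick $a_t n_s$ with $a_t n_s\cdot 0 = \gamma\cdot 0$; then $\gamma^{-1} a_t n_s$ fixes $0$, so lies in $K$, giving $\gamma = (a_t n_s)\, r_{-\varphi}^{-1}$... — more cleanly, one shows $K\cdot A\cdot N$ exhausts the group by this counting of orbits, and I would phrase it as: $A N \cdot 0$ is all of $\D$, hence for any $\gamma$ there is $an \in AN$ with $(an)^{-1}\gamma \in \mathrm{Stab}(0) = K$, i.e. $\gamma = (an) k = a n k$; then a short separate computation rearranges $ANK$ into $KAN$ (or one simply notes the paper's convention and works with whichever ordering; since $A$, $N$ are closed subgroups, conjugating $K$ past them stays in $\mathrm{SU}(1,1)$, and a dimension/uniqueness count pins down the form).

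The key computation is therefore the transitivity of $AN$ on $\D$. I would compute $n_s \cdot 0 = -is/(1-is) = (-is)(1+is)/|1-is|^2 = (s^2 - is)/(1+s^2)$, a curve through $0$; then apply $a_t$, which maps $z \mapsto (z\cosh(t/2) + \sinh(t/2))/(z\sinh(t/2) + \cosh(t/2))$, and check that as $(t,s)$ range over $\R^2$ the image $a_t n_s \cdot 0$ sweeps out all of $\D$. Equivalently, and perhaps more cleanly, I would first handle $A$ alone: $a_t \cdot 0 = \tanh(t/2)$ fills the real diameter $(-1,1)$, and then show the $N$-orbits (horocycles tangent to the boundary at $1$) together with the $A$-action recover the full disc — or, simplest of all, verify surjectivity of the smooth map $(t,s)\mapsto a_t n_s\cdot 0$ by checking it is a local diffeomorphism (nonvanishing Jacobian at, say, the origin) and proper, or by writing down the inverse explicitly.

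Finally, for uniqueness of the parameters $(\varphi, t, s)$: if $r_\varphi a_t n_s = r_{\varphi'} a_{t'} n_{s'}$ then $r_{\varphi'-\varphi} = a_t n_s n_{s'}^{-1} a_{t'}^{-1}$, and evaluating both sides at $0$ forces $a_t n_s \cdot 0 = a_{t'} n_{s'} \cdot 0$; injectivity of the orbit map from the previous step gives $t = t'$, $s = s'$ (using that $n_s n_{s'}^{-1} = n_{s-s'}$ since $N$ is a one-parameter group), and then $r_{\varphi - \varphi'} = \mathrm{id}$ on $\D$ forces $\varphi = \varphi' \pmod{2\pi}$. The main obstacle I anticipate is purely bookkeeping: being careful with the half-angle $\varphi/2$ and $t/2$ parametrizations, with the distinction between $K \cong S^1$ (so $\varphi$ defined mod $2\pi$) versus the $\pm$ sign ambiguity in $\mathrm{SU}(1,1)$ as a double cover of $\mathrm{PSU}(1,1)$, and with getting the ordering $KAN$ rather than $ANK$ right — none of which is deep, but all of which needs attention. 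An alternative, entirely algebraic proof via Gram–Schmidt on the columns of $\gamma$ with respect to the indefinite Hermitian form $|z_1|^2 - |z_2|^2$ is also available and would give uniqueness automatically, but the geometric argument above is shorter to write.
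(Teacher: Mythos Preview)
The paper does not actually prove this theorem: it is stated as the classical Iwasawa decomposition with a citation to Iwaniec~\cite{iwaniec:02}, and no argument is given. Your geometric plan via transitivity of $AN$ on $\D$ and identification of $K$ as the stabilizer of $0$ is a correct and standard route to the result.

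One small remark on the ordering issue you flag: the passage from $ANK$ to $KAN$ is cleaner than you suggest. Since $A$ normalizes $N$ (a short matrix check, or inherited from the isomorphism with $SL(2,\R)$), the product $AN$ is a subgroup; hence $G = (AN)K$ implies $G = G^{-1} = K^{-1}(AN)^{-1} = K(AN) = KAN$. This also disposes of your worry about ``conjugating $K$ past'' $A$ and $N$, which is not the right mechanism here. Finally, note that your formula $a_t\cdot 0 = \tanh(t/2)$ is correct; the $\tanh(t)$ appearing in the paper's text just after Definition~\ref{def_KAN} is a typo.
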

\noindent 
The orbits of $A$ converge to the same limit points $b_{\pm 1}=\pm 1$ on the unit circle when $t\rightarrow\pm\infty$. In particular the diameter $(b_{-1},b_{1})$ is the orbit $\{x=\tanh(t),~t\in\R\}$. The orbits of $N$ are the circles tangent to the unit circle at $b_1$. These circles are called {\em horocycles} with limit point $b_1$ ($N$ is called the horocyclic group). The orbits of $K$ are circles inside $\D$ (they coincide with Euclidean circles only when they are centered at the origin). \\
Isometries of the types $K$, $A$, $N$ are respectively called elliptic, hyperbolic and parabolic. Elliptic isometries have one fixed point in $\D$ while hyperbolic isometries have two fixed points on the boundary $\partial \D$ and parabolic isometries have one fixed point on $\partial \D$ (hence "at infinity"). Hyperbolic isometries play the same role as translations in Euclidean space while parabolic isometries have no counterpart. There is however one important difference with Euclidean translations: two hyperbolic translations do not commute in general. The fundamental difference from this point of view between $\D$ and the Euclidean plane $\R^2$ is that the latter is itself an Abelian group while $\D\simeq SU(1,1)/SO(2,\R)$ is not a group. This makes its analysis, especially its Fourier analysis, harder and less intuitive. 

We can now define a (periodic) lattice in $\D$ and in $\D\times \R^*$. Let $\Gamma$ be a discrete subgroup of $SU(1,1)$ such that the orbits of points in $\D$ under the action of $\Gamma$ have no accumulation point in $\D$. This is a Fuchsian group. To any Fuchsian group we can associate a {\em fundamental domain} which is the closure, noted $F_\Gamma$, of an open set $\overset{o}{F}_\Gamma\subset \D$ with the following properties  \cite{katok:92}:  
\begin{itemize}
\item[(i)] if $\gamma\neq Id\in\Gamma$, then $\gamma F_\Gamma\cap \overset{o}{F}_\Gamma = \emptyset$;
\item[(ii)] $\underset{\gamma\in\Gamma}{\bigcup}\, \gamma F_\Gamma =\D$.
\end{itemize}
Hence $F_\Gamma$ generates a periodic tiling (or tessellation) of $\D$. A fundamental domain need not be a compact subset of $\D$ (it may have vertices on the circle at infinity $\partial\D$). When it does, $\Gamma$ is called a {\em cocompact Fuchsian group}. In this case $\Gamma$ contains no parabolic element, its area is finite and a fundamental domain can always be built as a polygon (the Dirichlet region, see \cite{katok:92}). As mentioned in the introduction we identified in \cite{chossat-faugeras:09} a family of Fuchsian groups, some of them compact, that naturally arose from the analysis of the symmetries in the spatial distribution of the photoreceptors in the retina.

The following definition is just a translation to the hyperbolic plane of the definition of an Euclidean lattice.
\begin{definition}
A lattice group of $\D$ is a cocompact Fuchsian group which contains no elliptic element.
\end{definition}
The action of a lattice group has no fixed point, therefore the quotient surface $\D/\Gamma$ is a (compact) manifold and it is in fact a Riemann surface. A remarkable theorem states that any compact Riemann surface is isomorphic to a lattice fundamental domain of $\D$ if and only if it has genus $g\geq 2$ \cite{katok:92}. The case $g=1$ corresponds to lattices in the Euclidean plane (in this case there are three kinds of fundamental domains: rectangles, squares and hexagons). The simplest lattice in $\D$, with genus 2, is generated by an octagon and will be studied in detail in Section \ref{section:octagon}. 

Given a lattice, we may ask what is the symmetry group of the fundamental domain $F_\Gamma$, identified with the quotient surface $\D/\Gamma$. Indeed, this information will play a fundamental role in the subsequent bifurcation analysis. In the case of Euclidean lattice, the quotient $\R^2/\Gamma$ is a torus $\cal{T}$ (genus one surface), and the group of automorphisms is $\cal{H}\rtimes \cal{T}$ where $\cal{H}$ is the holohedry of the lattice: ${\cal{H}}=D_2, D_4$ or $D_6$ for the rectangle, square and hexagonal lattices respectively. In the hyperbolic case the group of automorphisms of the surface is finite. In order to build this group we need first to introduce some additional definitions.

Tilings of the hyperbolic plane can be generated by reflections through the edges of a triangle $\tau$ with vertices $P$, $Q$, $R$ and angles $\pi/\ell$, $\pi/m$ and $\pi/n$ respectively, where $\ell,~m,~n$ are integers such that $1/\ell+1/m+1/n<1$ \cite{katok:92}. 

\begin{figure}[htp]
\centering \includegraphics[width=0.5\textwidth]{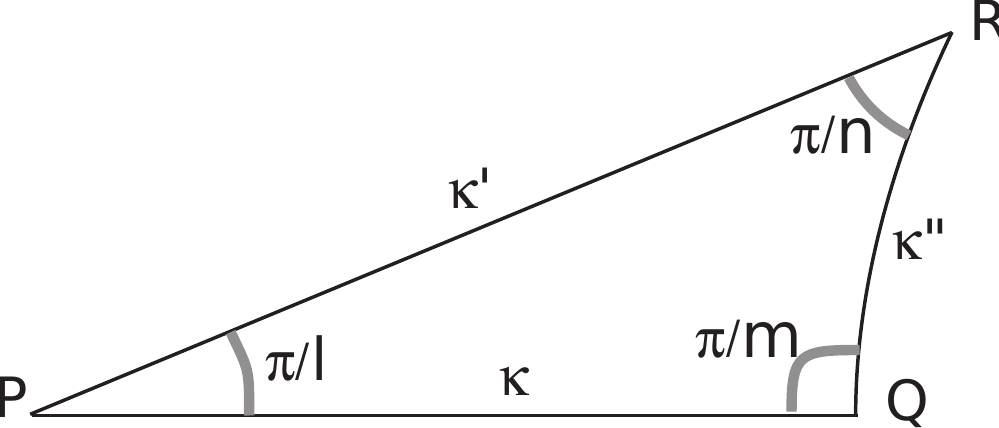}
\caption{{\bf The triangle $\tau$, also noted $T(2,3,8)$. The values of $l$, $m$, and $n$ are $l=8$, $m=2$ and $n=3$. }}
\label{fig:triangle}
\end{figure}

\begin{figure}
\centering
\includegraphics[width=0.8\textwidth]{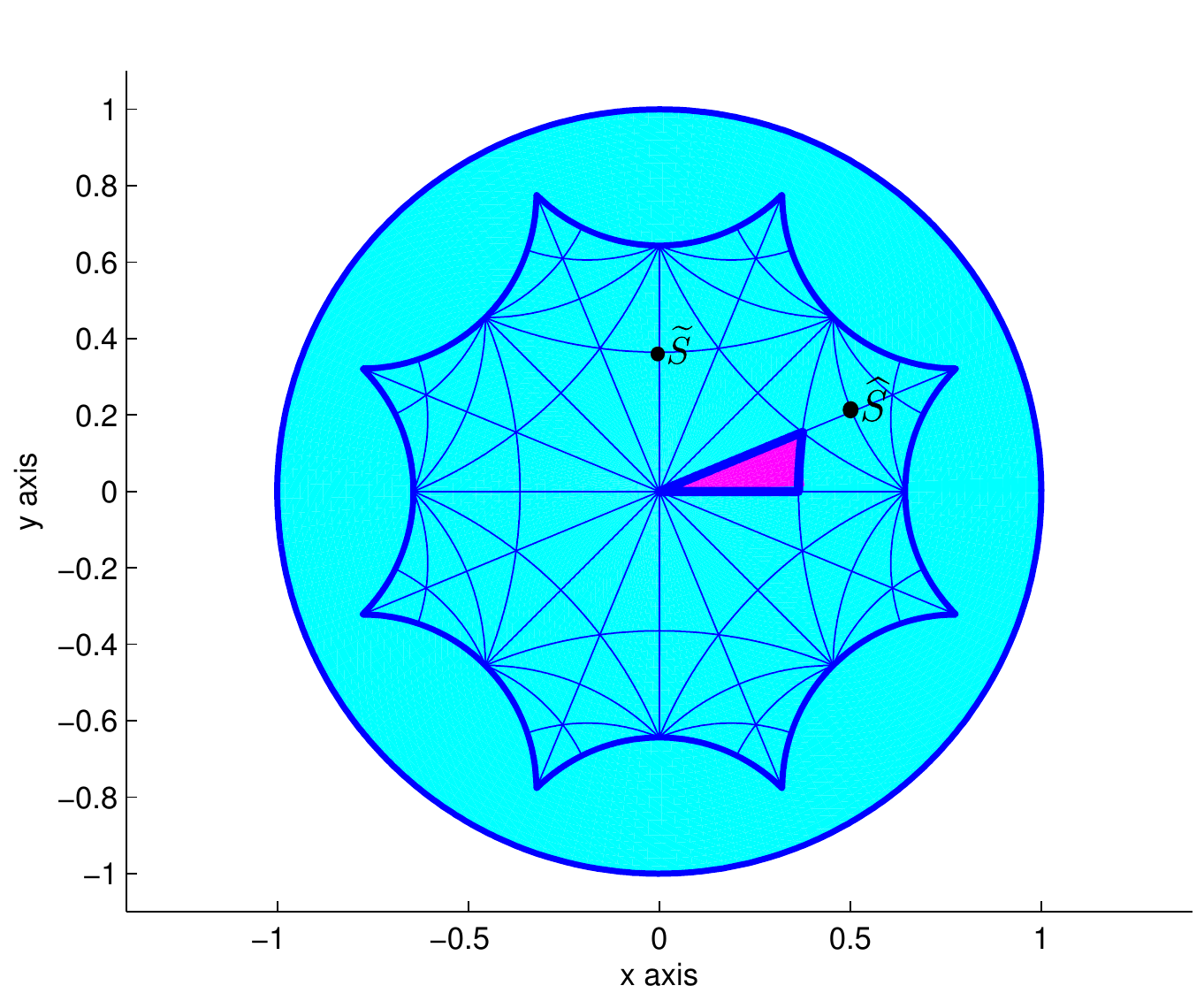}
\caption{Tesselation of the hyperbolic octagon with the triangle $T(2,3,8)$, colored in purple in the plot. We define two points $\widehat{S}$ and $\widetilde{S}$.  $\widehat{S}$ is the center of the rotation $\hat \sigma$ by $\pi$ (mod $\Gamma$), see text in subsection \ref{subsection:octlattice}. $\widetilde{S}$ is the center of the rotation $\tilde{\sigma}$ by $\pi$ (mod $\Gamma$), see text in subsection \ref{subsection:octlattice}.}
\label{fig:tesselation}
\end{figure}

Remember that reflections are orientation-reversing isometries. We note $\kappa$, $\kappa'$ and $\kappa''$ the reflections thro
ugh the edges $PQ$, $QR$ and $RP$  respectively (Figure \ref{fig:triangle}). The group generated by these reflections contains 
an index 2 Fuchsian subgroup $\Lambda$ called a triangle group, which always contains elliptic elements because the product of 
the reflections through two adjacent edges of a polygon is elliptic with fixed point at the corresponding vertex. One easily shows that $\Lambda$ is generated by the rotations of angles $2\pi/l$, $2\pi/m$ and $2\pi/n$ around the vertices $P$, $Q$, $R$ re
spectively. A fundamental domain of $\Lambda$ is the "quadrangle" $F_\Lambda =\tau\cup\kappa\tau$ \cite{katok:92}. Note that $F
_\Lambda\simeq \D/\Lambda$ is a sphere (genus 0 surface) obtained by identifying the three edges of $\tau$.
The subgroup of hyperbolic translations in $\Lambda$ is a lattice group $\Gamma$, normal in $\Lambda$, whose fundamental domain is filled with copies of the basic tile $\tau$. The group of orientation-preserving automorphisms of $F_\Gamma \simeq \D / \Gamma$ is therefore $G=\Lambda/\Gamma$. From the algebraic point of view, $G$ is generated by three elements $a,~b,~c$ satisfy therelations $a^\ell=b^m=c^n=1$ and $a\cdot b\cdot c=1$. We say that $G$ is an $(l,m,n)$ group.  Taking account of orientation-reversing isometries, the full symmetry group of $F_\Gamma$ is $G^*=G\cup \kappa G=G\rtimes \Z_2(\kappa)$. This is also a tiling group of $F_\Gamma$ with tile $\tau$: the orbit $G^*\tau$ fills $F_\Gamma$ and its elements can only intersect at their edges. 

Given a lattice, how to determine the groups $G$ and $G^*$? The following theorem gives conditions for this, see \cite{broughton-dirks-etal:01}. 
\begin{theorem}
An $(l,m,n)$ group $G$ is the tiling rotation group of a compact Riemann surface of genus $g$ if and only if its order satisfies the Riemann-Hurwitz relation
$$ |G|=\frac{2g-2}{1 - (\frac{1}{\ell}+\frac{1}{m}+\frac{1}{n})}.$$
\end{theorem}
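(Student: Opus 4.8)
The plan is to view the lattice group $\Gamma$ as a normal subgroup of the triangle group $\Lambda=\Lambda(\ell,m,n)$ with quotient $G=\Lambda/\Gamma$, so that the natural projection $\D/\Gamma\to\D/\Lambda$ is a degree-$|G|$ covering of the genus-$0$ surface $\D/\Lambda$, branched exactly over its three cone points with ramification indices $\ell$, $m$, $n$. Both implications then follow by comparing areas (equivalently, from the Riemann--Hurwitz formula).

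\emph{Direct implication.} Suppose $G$ is the tiling rotation group of a compact surface $S=\D/\Gamma$ of genus $g$. As recalled above, $F_\Gamma\simeq S$ is tiled by the orbit $G^*\tau$, i.e.\ by $|G^*|=2|G|$ copies of the basic triangle $\tau$ meeting only along edges, so $\mathrm{Area}(S)=2|G|\,\mathrm{Area}(\tau)$. By Gauss--Bonnet a hyperbolic triangle with angles $\pi/\ell,\pi/m,\pi/n$ has area $\pi(1-\frac{1}{\ell}-\frac{1}{m}-\frac{1}{n})$, hence $\mathrm{Area}(S)=2\pi|G|(1-\frac{1}{\ell}-\frac{1}{m}-\frac{1}{n})$. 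On the other hand $S$ is closed, orientable and of constant curvature $-1$, so $\mathrm{Area}(S)=-2\pi\chi(S)=2\pi(2g-2)$. Equating gives $2g-2=|G|(1-(\frac{1}{\ell}+\frac{1}{m}+\frac{1}{n}))$, which is the stated relation. (Equivalently one applies Riemann--Hurwitz directly to $S\to\D/\Lambda$: over the cone point of order $\ell$ there are $|G|/\ell$ preimages each of ramification index $\ell$, and similarly for $m,n$, so $2g-2=|G|(-2)+|G|(1-\frac{1}{\ell})+|G|(1-\frac{1}{m})+|G|(1-\frac{1}{n})$.)

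\emph{Converse.} Assume the $(\ell,m,n)$ group $G$, generated by $a,b,c$ of orders \emph{exactly} $\ell,m,n$ with $abc=1$, has order satisfying the relation. The von Dyck presentation of $\Lambda$ provides an epimorphism $\varphi:\Lambda\to G$ sending the three vertex rotations to $a,b,c$; set $\Gamma=\ker\varphi$, a normal subgroup of index $|G|$ in $\Lambda$. The crucial step is that $\Gamma$ is torsion-free: by the structure theory of cocompact Fuchsian groups \cite{katok:92}, every torsion element of $\Lambda$ is conjugate to a power of one of the three vertex rotations, and a nontrivial such power has nontrivial image under $\varphi$ precisely because $a,b,c$ have orders $\ell,m,n$. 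Hence $\Gamma$ contains no elliptic element; being torsion-free and of finite index in the cocompact group $\Lambda$, it is a lattice group in the sense of the definition above, so $\D/\Gamma$ is a compact Riemann surface on which $G=\Lambda/\Gamma$ acts as the tiling rotation group with tile $\tau$. By the direct implication its genus $g'$ satisfies $|G|=(2g'-2)/(1-(\frac{1}{\ell}+\frac{1}{m}+\frac{1}{n}))$, and comparison with the hypothesis forces $g'=g$.

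The bookkeeping in the direct implication is routine; the substantive point, and the main obstacle, lies in the converse, where one must invoke the fact that torsion in a hyperbolic triangle group is confined — up to conjugacy — to the three cyclic vertex subgroups, so that a ``torsion-preserving'' quotient automatically has a surface group as kernel. Without this input the surface realizing $G$ in the converse direction need not exist.
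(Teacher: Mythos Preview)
The paper does not actually prove this theorem: it is stated with a reference to \cite{broughton-dirks-etal:01} and then used as a black box. There is therefore nothing in the paper to compare your argument against.

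That said, your proof is correct and is precisely the standard one. The direct implication is an area count (Gauss--Bonnet on the triangle versus Gauss--Bonnet on the closed surface), or equivalently Riemann--Hurwitz applied to the branched cover $\D/\Gamma\to\D/\Lambda$; either version is routine. The converse is the substantive half, and you have identified the genuine content: one must know that every torsion element of the cocompact triangle group $\Lambda(\ell,m,n)$ is conjugate into one of the three cyclic vertex subgroups, so that an epimorphism which is faithful on those subgroups has torsion-free kernel. Your insistence that $a,b,c$ have orders \emph{exactly} $\ell,m,n$ (not merely dividing them) is essential here and is implicit in the paper's definition of an $(\ell,m,n)$ group, since $G$ arises there as $\Lambda/\Gamma$ with $\Gamma$ torsion-free. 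With that in hand, $\Gamma=\ker\varphi$ is a surface group of the correct genus by the forward direction, and the argument closes.
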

Tables of triangle groups for surfaces of genus up to 13 can be found in \cite{broughton-dirks-etal:01}.

These definitions extend naturally to $SDP(2) \simeq\D\times\R^*$ as follows.

\begin{definition}
A lattice group of $\D\times\R^*$ is a subgroup of the form $\Gamma\times\Xi$ where $\Gamma$ is a lattice group acting in $\D
$ and $\Xi$ is a non trivial discrete subgroup of $\R^*$.
\end{definition}
Any discrete subgroup of $\R^*$ is generated by a positive number $a$ and can be further identified with $\Z$: $\Xi=\{a^n,~n\in\Z\}$. A fundamental domain for  $\Gamma\times\Xi$ is a "box" $F_\Gamma\times [1,a]$. 

\subsection{Plane waves in SDP(2)}

Let us first recall the Euclidean setting for functions defined in $\R^3$ (we could take any other $\R^n$, $n>0$). In this case every function of the form $e^{\lambda{\bf k}\cdot\mr}$ where ${\bf k}\in\R^3$ is a unit vector, is an eigenfunction of the Laplace operator in $\R^3$: 
\[
\Delta e^{\lambda{\bf k}\cdot\mr}=-\lambda^2e^{\lambda{\bf k}\cdot\mr},\,\mr \in \R^3.
\]
The fact that the eigenvalues do not depend upon the direction of the wave vector ${\bf k}$ reflects the rotational invariance of the Laplace operator. Moreover if we take $\lambda=i\alpha$, $\alpha\in\R$, then $e^{\lambda\mk \cdot\mr}$ is invariant under translations in $\R^2$ by any vector $\me$ satisfying the condition $\mk \cdot\me=2n\pi$ where $n\in\Z$ (it clearly does not depend upon the coordinate along the axis orthogonal to $\mk$). The functions $e^{i\alpha\mk \cdot\mr}$ are elementary spatial waves in $\R^3$. \\
Now, given $\alpha >0$ and a basis of unit vectors $\{\mk_1,\mk_2,\mk_3\}$ of $\R^3$ we can define the translation group ${\cal L}$ spanned by ${\bf e}_i$, $i=1,2,3$, such that $\mk_i\cdot {\bf e}_j=2\pi/\alpha\delta_{ij}$. Hence ${\cal L}$ is a lattice group of $\R^3$. It defines a periodic tiling, the fundamental domain of which is a compact cell which we may identify with the quotient space $\R^3/{\cal L}$ and which  we can identify with a 3-torus.  Any smooth enough function in $\R^3$ which is invariant under the action of ${\cal L}$ can be expanded in a Fourier series of elementary spatial waves $e^{i\alpha(m\mk_1+n\mk_2+p\mk_3)\cdot\mr}$, $m,n,p\in\Z$. The Laplace operator in the space of square-integrable functions in $\R^3/{\cal L}$ is self-adjoint and its spectrum consists of real isolated eigenvalues with finite multiplicities. The multiplicity depends upon the {\em holohedry} of the lattice, which we defined in the previous section (the largest subgroup of $O(3)$ leaving invariant the lattice). There are finitely many holohedries (see \cite{miller:72} for details). It follows from the above considerations that by restricting the analysis to classes of functions which are invariant under the action of a lattice group, one can apply standard techniques of equivariant bifurcation theory to assert the generic existence of branches of solutions of Euclidean invariant bifurcation problems, which are spatially periodic with respect to lattice groups and whose properties are largely determined by the holohedry of the lattice \cite{golubitsky-stewart-etal:88},\cite{dionne-golubitsky:92}.  
Note also this was the approach of  \cite{bressloff-cowan-etal:02} for the analysis of the occurence of visual hallucinations in the cortex.

We wish to apply the same idea to bifurcation problems defined in $SDP(2)$. For this we need to define elementary eigenfunctions of the Laplace-Beltrami operator such that spatially periodic functions (in a sense to be defined later) can be expanded in series of these elementary "waves" in $SDP(2)$. In the sequel $\triangle$ will denote the Laplace-Beltrami operator in $SDP(2)$ or, equivalently, in $\DR$.

Let $b$ be a point on the circle $\partial D$, which we may  take equal to $b_1=1$  after a suitable rotation. For $z\in \D$, we define the "inner product" $\langle z,b \rangle$ as the algebraic distance to the origin of the (unique) horocycle based at $b$ and passing through $z$. This distance is defined as the hyperbolic signed length of the segment $O\xi$ where $\xi$ is the intersection point of the horocycle and the line (geodesic) $Ob$. Note that $\langle z,b \rangle$ does not depend on the position of $z$ on the horocycle. In other words, $\langle z,b \rangle$ is invariant under the action of the one-parameter group $N$ (see definition above). The "hyperbolic plane waves" 
\[
e_{\rho,b}(z)=e^{(i\rho+\frac{1}{2})\langle z,b \rangle}, \,\rho\in\C,
\]
satisfy
\[
-\triangle_D\, e_{\rho,b} = (\rho^2+\frac{1}{4}) e_{\rho,b}.
\]
where $\triangle_D$ is defined in Corollary \ref{cor:laplace-beltrami}. These are the elementary eigenfunctions with which Helgason built a Fourier transform theory for the Poincar\'e disc, see \cite{helgason:00}. It follows from Helgason's theory that any eigenfunction of $\triangle_\D$ can be expressed as an integral over the boundary elements:

\begin{theorem} \label{theorem: Helgason} \cite{helgason:00}
Any eigenfunction of the operator $-\triangle_\D$ admits a decomposition of the form
\[
\int_{\partial D}{e_{\rho,b}(z) dT_\rho(b)}
\]
where $T_\rho$ is a distribution defined on the circle $\partial D$ and the eigenvalue is $\rho^2+\frac{1}{4}$.
\end{theorem}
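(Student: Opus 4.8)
The plan is to recognize the stated map as the \emph{Poisson transform} of the Poincar\'e disc and to prove it is a bijection onto the eigenspace by a Fourier ($K$-type) argument. First note that the horocyclic inner product satisfies $e^{\langle z,b\rangle} = (1-|z|^2)/|z-b|^2 =: P(z,b)$, the classical Poisson kernel, so that $e_{\rho,b}(z) = P(z,b)^{1/2+i\rho}$ and the map in question is $\mathcal{P}_\rho T(z) = \int_{\partial\D} e_{\rho,b}(z)\,dT_\rho(b)$. The first step is to check that $\mathcal{P}_\rho T$ really lies in the eigenspace $E_\rho := \{f : -\triangle_\D f = (\rho^2+\tfrac14)f\}$: since $\triangle_\D$ acts in the variable $z$ while $T_\rho$ is a fixed distribution in $b$, one differentiates under the distributional pairing and uses the identity $-\triangle_\D e_{\rho,b} = (\rho^2+\tfrac14)e_{\rho,b}$ already recorded above. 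So the real content of the theorem is that $\mathcal{P}_\rho$ is onto $E_\rho$ (and one-to-one).

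For surjectivity I would reduce to a radial ODE. Writing $z = re^{i\theta}$ and $b = e^{i\beta}$, any $f\in E_\rho$ is real-analytic by elliptic regularity, hence has a Fourier expansion $f(re^{i\theta}) = \sum_{n\in\Z} f_n(r)e^{in\theta}$ converging in $C^\infty$ for $r<1$; each term again lies in $E_\rho$, so $f_n$ solves the ordinary differential equation obtained by writing the eigenvalue equation in polar coordinates (a hypergeometric equation after a change of variable). Its solution space that is \emph{regular at the origin} is one-dimensional, spanned by a function $\varphi_{\rho,n}(r)$ which, up to normalization, is precisely the $n$-th Fourier coefficient of the plane wave: expanding $P(z,b)^{1/2+i\rho}$ in the angle $\theta-\beta$ gives $P(z,b)^{1/2+i\rho} = \sum_n \varphi_{\rho,n}(r)\,e^{in(\theta-\beta)}$ with the same $\varphi_{\rho,n}$. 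Hence, for $\rho$ outside the discrete exceptional set where the two indicial exponents collide and the ``regular'' solution degenerates, one gets $f_n = c_n\varphi_{\rho,n}$ for scalars $c_n$, and formally $f = \mathcal{P}_\rho T$ with $T = \sum_n c_n e^{in\beta}\,\tfrac{d\beta}{2\pi}$.

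The hard part is then to show that this formal boundary series is a genuine distribution, i.e. that $|c_n|$ grows at most polynomially in $|n|$; this is where the depth of the theorem lies. One has $c_n = f_n(r_0)/\varphi_{\rho,n}(r_0)$ for any fixed $0<r_0<1$, and although $|f_n(r_0)|$ decays exponentially in $|n|$ by analyticity, so does $|\varphi_{\rho,n}(r_0)|$ (roughly like $r_0^{|n|}$), so a bound at fixed $r_0$ does not suffice: one must let $r_0\to1$ and exploit a control on the growth of $f$ near $\partial\D$. The sharp statement is that $\mathcal{P}_\rho$ maps $C^{-\infty}(\partial\D)$ onto the eigenfunctions of \emph{moderate growth} ($|f(z)|\le C e^{c\,d_2(0,z)}$), which is the characterization behind the cited result of Helgason, the fully general hyperfunction version being the Helgason conjecture proved by Kashiwara--Kowata--Minemura--Okamoto--\=Oshima--Tanaka. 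I would therefore either (i) restrict to eigenfunctions of moderate growth --- which is all that is needed here, since the eigenfunctions entering the bifurcation analysis are invariant under a cocompact lattice $\Gamma$, hence descend to the compact surface $\D/\Gamma$ and are automatically bounded --- and derive the polynomial bound on $c_n$ from a contour-shift or stationary-phase analysis of $\varphi_{\rho,n}$ as $r_0\to1$; or (ii) simply invoke the cited theorems. Injectivity of $\mathcal{P}_\rho$ (for non-exceptional $\rho$) is easier: one recovers $T_\rho$, up to the explicit nonzero $c$-function factor, from the boundary asymptotics of $\mathcal{P}_\rho T$ as $z\to b$ along geodesics. Thus the principal obstacle is entirely the surjectivity/convergence estimate, together with the careful treatment of the exceptional values of $\rho$ at which the $c$-function has poles and the naive inversion breaks down.
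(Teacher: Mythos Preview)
The paper does not prove this statement at all: it is quoted as a black box from Helgason's monograph, with the citation \cite{helgason:00} standing in for the proof. Your outline is a correct sketch of the standard argument (Poisson transform, $K$-type expansion, reduction to a hypergeometric ODE, and growth control on the Fourier coefficients of the boundary value), and you correctly flag the one subtlety the paper glosses over: with $T_\rho$ a genuine \emph{distribution} on $\partial\D$ the Poisson transform only hits eigenfunctions of moderate (at most exponential) growth, while the literal ``any eigenfunction'' requires analytic functionals/hyperfunctions and is the Helgason conjecture proved by Kashiwara--Kowata--Minemura--Okamoto--\=Oshima--Tanaka. As you observe, this distinction is immaterial for the paper's application, since $\Gamma$-periodic eigenfunctions descend to the compact surface $\D/\Gamma$ and are therefore bounded.
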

Real eigenvalues  $-(\rho^2+\frac{1}{4})$ of $\triangle_\D$ correspond to taking $\rho$ real or $\rho \in i\R$. The latter case is irrelevant for our study as it corresponds to exponentially diverging eigenfunctions. Therefore the real spectrum of $\triangle$ is continuous and is bounded from above by $-1/4$. By using (\ref{eq:Laplace}) we extend Theorem \ref{theorem: Helgason} to $\DR$:

\begin{corollary} \label{corollaire: fonctions propres}
(i) Let us note $\bz=(z_1,z_2,z_3)\in \DR$ and $z=z_1+iz_2$. The function 
\begin{equation} \label{elementaryeigenfunction}
\psi_{\rho,b,\beta}(\bz) = e_{\rho,b}(z)\,e^{i\log \beta \log z_3}
\end{equation}
satisfies the relation $\triangle \psi_{\rho,b,\beta} = -(\rho^2+\frac{1}{4}+\log^2 \beta) \psi_{\rho,b,\beta}$. \\
(ii) Any eigenfunction of $\triangle$ admits a decomposition of the form
\begin{equation} \label{eigenfunction}
e^{i\log \beta\log z_3} \int_{\partial \D}{e_{\rho,b}(z)\,dT_\rho(b)}.
\end{equation}
\end{corollary}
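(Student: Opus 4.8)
The plan is to reduce the claim about $\triangle$ on $\DR$ to the already-established Helgason theory on $\D$ by exploiting the fact that the Laplace--Beltrami operator splits as $\triangle = \triangle_\D + L_3$, where $L_3 = z_3\,\partial_{z_3} + z_3^2\,\partial_{z_3}^2$ acts only on the $z_3$ variable (Corollary \ref{cor:laplace-beltrami}). The two summands commute since they involve disjoint sets of variables, so one can look for eigenfunctions of $\triangle$ in separated form $\phi(z_1,z_2)\,h(z_3)$.

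For part (i) I would simply verify the two eigenvalue relations separately. First, $\triangle_\D e_{\rho,b} = -(\rho^2+\tfrac14) e_{\rho,b}$ is exactly the relation recalled just before Theorem \ref{theorem: Helgason}, so nothing new is needed there; the factor $e^{i\log\beta\,\log z_3}$ is annihilated by $\triangle_\D$ because it does not depend on $z_1,z_2$. Second, a direct computation with $h(z_3)=e^{i\log\beta\,\log z_3}=z_3^{\,i\log\beta}$ gives $z_3 h' = i\log\beta\, h$ and $z_3^2 h'' = (i\log\beta)(i\log\beta-1) h$, hence $L_3 h = \big(i\log\beta + (i\log\beta)(i\log\beta-1)\big) h = (i\log\beta)^2 h = -\log^2\beta\, h$. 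Adding the two contributions yields $\triangle \psi_{\rho,b,\beta} = -(\rho^2+\tfrac14+\log^2\beta)\,\psi_{\rho,b,\beta}$, which is the stated formula. This step is routine; the only care needed is the sign bookkeeping in $(i\log\beta)^2=-\log^2\beta$ and the observation that $z_3^{\,i\log\beta}$, not $e^{i\log\beta\, z_3}$, is the correct eigenfunction of $L_3$ (matching the $z_3$-translation-invariance structure $z_3\mapsto a z_3$).

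For part (ii) I would argue that an arbitrary eigenfunction $\Psi$ of $\triangle$ on $\DR$, with eigenvalue $-\Lambda$, can be decomposed by separation of variables: since $L_3$ is (formally) self-adjoint with respect to the measure $dz_3/z_3$ on $\R^+_*$ and has spectral decomposition indexed by the multiplicative characters $z_3\mapsto z_3^{\,i\log\beta}$ (equivalently, the Mellin transform diagonalizes it), one writes $\Psi(\bz)=\int \Phi_\beta(z)\, z_3^{\,i\log\beta}\, d\mu(\beta)$; applying $\triangle=\triangle_\D+L_3$ forces each component $\Phi_\beta$ to satisfy $\triangle_\D \Phi_\beta = -(\Lambda-\log^2\beta)\Phi_\beta$, i.e. $\Phi_\beta$ is an eigenfunction of $\triangle_\D$ on the Poincar\'e disc. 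Theorem \ref{theorem: Helgason} then represents each $\Phi_\beta$ as $\int_{\partial\D} e_{\rho,b}(z)\, dT_\rho(b)$ with $\rho^2+\tfrac14 = \Lambda-\log^2\beta$. Combining gives the claimed form \eqref{eigenfunction}, where in the case of a single $\beta$ (a pure eigenfunction of $L_3$) one gets exactly $e^{i\log\beta\,\log z_3}\int_{\partial\D} e_{\rho,b}(z)\, dT_\rho(b)$.

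The main obstacle is making the separation-of-variables argument in part (ii) rigorous at the level of distributions/generalized eigenfunctions: one must justify that every joint eigenfunction of $\triangle$ is a superposition (or, in the irreducible case, a single term) of products of a $\triangle_\D$-eigenfunction with an $L_3$-character, which requires knowing that the pair $(\triangle_\D, L_3)$ admits a joint spectral decomposition and that $\triangle$ has no eigenfunctions beyond those built this way. In the spirit of the rest of the paper, however, this is most cleanly handled by simply invoking the direct-product structure $\DR=\D\times\R^+_*$ together with Helgason's theorem on the first factor and the elementary Mellin/Fourier analysis on $\R^+_*$ for the second; the statement as phrased really only asserts the \emph{form} of such a decomposition, so it suffices to exhibit it, as in \eqref{eigenfunction}, rather than to prove a completeness statement. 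I would therefore present the proof as: (a) the product computation of part (i); (b) the remark that $\triangle=\triangle_\D+L_3$ with commuting summands on disjoint variables; (c) the reduction of part (ii) to Theorem \ref{theorem: Helgason} applied fiberwise in $z_3$.
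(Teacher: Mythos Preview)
Your proposal is correct and matches the paper's approach: the paper actually gives no explicit proof of this corollary, merely introducing it with the phrase ``By using (\ref{eq:Laplace}) we extend Theorem \ref{theorem: Helgason} to $\DR$,'' so your argument --- splitting $\triangle = \triangle_\D + L_3$ via Corollary~\ref{cor:laplace-beltrami}, verifying the two eigenvalue relations separately for part (i), and reducing part (ii) to Helgason's theorem fiberwise --- is precisely the intended (and only reasonable) route. Your discussion of the Mellin decomposition and the caveat about rigor in part (ii) is more careful than what the paper demands; since the statement is phrased as exhibiting the \emph{form} of the decomposition rather than a completeness result, your final remark that it suffices to display \eqref{eigenfunction} is the right level of detail.
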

From there we can extend the definition (and properties) of the Fourier transform in the hyperbolic plane given by Helgason \cite{helgason:00} to the space $\DR$:
\begin{definition}
Given a function $f$ on $\DR$, its Fourier transform is defined by
\begin{equation} \label{H-Fourier}
\tilde f(\rho, b,\beta) =  \int_{\DR}{f(\bz)e_{-\rho,b}(z)e^{-i \log \beta \log z_3} d\bz}
\end{equation}
\end{definition}

In the following we will look for solutions of bifurcation problems in $\DR$, which are invariant under the action of a lattice group: $(\gamma,\xi)\cdot u(z,z_3)=u(\gamma^{-1}z,\xi^{-1}z_3)=u(z,z_3)$ for $\gamma\in\Gamma$, $\xi\in\Xi$. This boils down to looking for the problem restricted to a fundamental domain with suitable boundary conditions imposed by the $\Gamma$-periodicity, or, equivalently, to looking for the solutions of the problem projected onto the orbit space $\D/\Gamma\times \R^+_*/\Xi$ (which inherits a Riemannian structure from $\DR$). Because the fundamental domain is compact, it follows from general spectral theory that $-\triangle$ is self-adjoint, non negative and has compact resolvent in $L^2(\D/\Gamma\times \R^+_*/\Xi)$ \cite{buser:92}. Hence its spectrum consists of real positive and isolated eigenvalues of finite multiplicity.  

Coming back to Theorem \ref{theorem: Helgason}, we observe that those eigenvalues $\lambda$ of $-\triangle_\D$ which correspond to  $\Gamma$-invariant eigenfunctions, must have $\rho\in\R$ or $\rho\in i\R$. The case $\rho$ real corresponds to the Euclidean situation of planar waves with a given wave number, the role of which is played by $\rho$ in $\D$. In this case the eigenvalues of $-\triangle_\D$ satisfy $1/4< \lambda$. On the other hand there is no Euclidean equivalent of the case $\rho\in i\R$, for which the eigenvalues $0 < \lambda \leq 1/4$ are in finite number. It turns out that such "exceptional" eigenvalues do not occur for "simple" groups such as the octagonal group to be considered in more details in the Section \ref{section:octagon}. This follows from formulas which give lower bounds for these eigenvalues. Let us give two examples of such estimates (derived by Buser \cite{buser:92}, see also \cite{iwaniec:02}): (i) if $g$ is the genus of the surface $\D/\Gamma$, there are at most $3g-2$ exceptional eigenvalues; (ii) if $d$ is the diameter of the fundamental domain, then the smallest (non zero) eigenvalue is bounded from below by $\left( 4\pi\, \sinh \frac{d}{2}\right)^{-2}$. 

Suppose now that the eigenfunction in Theorem \ref{theorem: Helgason} is $\Gamma$-periodic. Then the distribution $T_\rho$ satisfies  the following equivariance relation \cite{pollicott:89}. Let $\gamma (\theta)$ denote the image of $\theta\in\partial \D$ under the action of $\gamma\in\Gamma$. Then 
$$T_\rho(\gamma\cdot\theta)=|\gamma'(\theta)|^{\frac{1}{2}+i\rho}\,T_\rho(\theta).$$ 
As observed by \cite{series:87}, this condition is not compatible with $T_\rho$ being a "nice" function. In fact, not only does there not exist any explicit formula for these eigenfunctions, but their approximate computation is itself an uneasy task. We shall come back to this point in subsequent sections.

\section{Bifurcation of patterns in SDP(2)}\label{section:bifurcation}

We now consider again equation (\ref{eq:neuralmass}), which we set in $\DR$ by the change of coordinates (\ref{diffeoTheta}). Assuming $I=0$ (no external input)  and the invariance hypothesis (\ref{Invariance hypothesis}) for the connectivity function $w$, and after a choice of time scale such that $\alpha=1$, the equation reads
\begin{equation}\label{eq: I=0}
\frac{\partial V}{\partial \tau} = -V + \mu w \ast V + R(V)
\end{equation} 
where:
\begin{itemize}
\item $\mu=S'(0)$,
\item $w \ast V$ denotes the convolution product $\int_{\DR}{w(\bz,\bz')V(\bz')\,d\bz'}$ (with $w(\bz,\bz')=f(d(\bz,\bz'))$),
\item $R(V)$ stands for the remainder terms in the integral part of (\ref{Invariance hypothesis}). This implies $R'(0)=0$.
\end{itemize}
It is further assumed that $f$ is a "Mexican hat" function, typically of the form
$$
f(x)=\frac{1}{\sqrt{2\pi \sigma_1^2}}  e^{-\frac{x^2}{2\sigma_1^2}}-\theta \frac{1}{\sqrt{2\pi \sigma_2^2}} e^{-\frac{x^2}{2\sigma_2^2}},
$$
where $\sigma_1 < \sigma_2$ and $\theta \leq 1$. 

Let us look at the linear stability of the trivial solution of (\ref{eq: I=0}) against perturbations in the form of hyperbolic waves (\ref{elementaryeigenfunction}) with $\rho\in\R$. This comes back to looking for $\sigma$'s such that
$$
\sigma = -1 + \mu\hat w
$$
where $\hat w$ is the hyperbolic Fourier transform of $w$ as defined in Definition \ref{H-Fourier}. The numerical calculation shows that for each value of $\rho$ and $\beta$, there exists a value $\mu(\rho,\beta)$ such that if $\mu<\mu(\rho,\beta)$ then all $\sigma$'s are negative, while $\sigma=0$ at $\mu=\mu(\rho,\beta)$. The "neutral stability surface" defined by $\mu(\rho,\beta)$ is typically convex and reaches a minimum $\mu_c$ at some values $\rho_c,\beta_c$. Therefore when $\mu<\mu_c$ the trivial state $V=0$ is stable against such perturbations while it becomes marginally stable when $\mu=\mu_c$ with critical modes $\psi_{\rho_c,b,\beta_c}$, for any $b\in S^1$ (rotational invariance). Therefore a bifurcation takes place at this critical value.  

The situation is absolutely similar if instead of equation \ref{eq: I=0} we consider systems of PDEs in $\DR$ with pattern selection behavior and with $U(1,1)\times R^+_*$ invariance. A paradigm for such systems is the "Laplace-Beltrami" version of Swift-Hohenberg equation
$$
\frac{\partial u}{\partial t} = \mu u - (\triangle +\alpha)^2 u + u^2,~~\alpha\in\R^+_*.
$$
with $\triangle$ as in (\ref{eq:Laplace}).

It is unconceivable to solve the bifurcation problem at this level of generality, because the fact that the spectrum is continuous plus that each eigenvalue $\sigma$ has an infinite multiplicity (indifference to $b$) makes impossible the use of the classical tools of bifurcation theory. As in the Euclidean case of pattern formation, we therefore want to look for solutions in the restricted class of patterns which are spatially periodic. In the present framework, this means looking for bifurcating patterns which are invariant under the action of a lattice group $\Gamma\times\Z^+_*$ in $SU(1,1)\times\R^+_*$. There is however an immediate big difference with the Euclidean case. While in the latter any critical wave number $\alpha_c$ can be associated with a periodic lattice (of period $2\pi/\alpha_c$), in the hyperbolic case not every value of $\rho_c$ can be associated with a lattice in $\D$. More precisely it is not generic for the Laplace-Beltrami operator in $\D$ to possess eigenfunctions, as defined in Theorem \ref{theorem: Helgason}, which are invariant under a lattice group. We can therefore look for the bifurcation of spatially periodic solutions associated with a given lattice, but these patterns will not in general correspond to the most unstable perturbations unless the parameters in the equations are tuned so that it happens this way. The question of the observability of such patterns is therefore completely open. 

We henceforth look for patterns in $\DR$ which are invariant under a lattice $\Gamma$ in $\D$ and which are periodic, with period $2\pi/\beta_c$, in $\R^+_*$. This boils down to looking for solutions in the space $L^2(\D/\Gamma\times\R^+_*/\beta_c\Z^+_*)$. Note that $\R^+_*/\beta_c\Z^+_*\simeq S^1$. With a suitable inner product this space admits an orthonormal Hilbert basis which is made of functions of the form 
$$
\Psi(z)e^{ni\log(\beta_c) \log(z_3)},~~z\in\D,~~n\in \N
$$ 
where $\Psi$ are the eigenfunctions of $\triangle$ in $L^2(\D/\Gamma)$. As we mentionned in the previous section, these eigenfunctions are not known explicitely. By restricting the "neutral stability surface" $\mu(\rho,\beta)$ to those values which correspond to eigenfunctions with $\Gamma\times\beta_c\Z^+_*$ periodicity, we obtain a discrete set of points on this surface with one minimum $\mu_0$ associated with a value $\rho_0$ of $\rho$ and $\beta_0$ of $\beta$. In general this minimum is unique. Moreover the multiplicity of the 0 eigenvalue is now finite and this eigenvalue is semi-simple. Let us call $X$ the eigenspace associated with the 0 eigenvalue (therefore $X$ is the kernel of the critical linear operator).   

The full symmetry group of $\D/\Gamma\times S^1$ is equal to $G^*\times O(2)$ where $G^*$ is the (finite) group of automorphisms in $U(1,1)$ of the Riemann surface $\D/\Gamma$ (notation of Section \ref{section:lattices}) and $O(2)$ is the symmetry group of the circle (generated by $S^1$ and by reflection across a diameter). The equation restricted to this class of $\Gamma\times \beta_c\Z^+_*$-periodic patterns is invariant under the action of  $G^*\times O(2)$. 
We can therefore apply an equivariant Lyapunov-Schmidt reduction to this bifurcation problem \cite{chossat-lauterbach:00}, leading to a bifurcation equation in $X$
\begin{equation} \label{eq:bifurcation}
f(x,\mu)=0,~~x\in X
\end{equation}
where $f~:~X\times\R\rightarrow X$ is smooth, $f(0,0)=0$, $\partial_xf(0,0)$ is not invertible and $f(\cdot,\mu)$ commutes with the action of $G^*\times O(2)$ in $X$. 

Now the methods of equivariant bifurcation theory can be applied to (\ref{eq:bifurcation}). In particular we can apply the Equivariant Branching lemma (see \cite{golubitsky-stewart-etal:88} for a detailed exposition):

\begin{theorem} \label{lemme:equivariantbranching}
Suppose the action of $G^*\times O(2)$ is absolutely irreducible in $X$ (i.e. real equivariant linear maps in $X$ are scalar multiple of the identity). Let $H$ be an isotropy subgroup of $G^*\times O(2)$ such that the subspace $X^H=\{x\in X~|~H\cdot x=x\}$ is one dimensional. Then generically a branch of solutions of (\ref{eq:bifurcation}) bifurcates in $X^H$. The conjugacy class of $H$ (or isotropy type) is called "symmetry breaking".
\end{theorem}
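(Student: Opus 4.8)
The plan is to follow the classical route for the Equivariant Branching Lemma: exploit the invariance of fixed-point subspaces to reduce the equivariant problem to a one-dimensional equation, identify the precise genericity condition as a transversal eigenvalue crossing, and finish with the implicit function theorem.

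First I would record the consequence of absolute irreducibility. Since every real $G^*\times O(2)$-equivariant linear endomorphism of $X$ is a scalar multiple of the identity, we may write $D_xf(0,\mu)=c(\mu)\,\mathrm{Id}$ for a smooth scalar function $c$, and the hypothesis that $D_xf(0,0)$ is not invertible forces $c(0)=0$. The word \emph{generically} in the statement then refers to the transversality (eigenvalue-crossing) condition $c'(0)\neq 0$, which I would make explicit at the outset; this is the only substantive hypothesis beyond absolute irreducibility, which itself is exactly what rules out the degenerate alternative in which $D_xf(0,\mu)$ has a nontrivial commuting part and the scalar reduction of the linearization fails.

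Second comes the key structural observation: for any $h\in H$ and any $x\in X^H$ one has $h\cdot f(x,\mu)=f(h\cdot x,\mu)=f(x,\mu)$, so $f(x,\mu)\in X^H$; hence $f$ restricts to a smooth map $X^H\times\R\to X^H$. Using $\dim X^H=1$, choose a generator $v$ and write $x=sv$, so that $f(sv,\mu)=g(s,\mu)\,v$ for a smooth scalar function $g$ with $g(0,\mu)\equiv 0$ (the trivial branch). Factoring $g(s,\mu)=s\,\tilde g(s,\mu)$ with $\tilde g$ smooth, a direct Taylor computation gives $\tilde g(0,0)=\partial_s g(0,0)=c(0)=0$ and $\partial_\mu\tilde g(0,0)=\partial_\mu\partial_s g(0,0)=c'(0)\neq 0$.

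Third, apply the implicit function theorem to $\tilde g(s,\mu)=0$ at $(0,0)$: since $\partial_\mu\tilde g(0,0)\neq 0$, there is a unique smooth function $\mu=\mu(s)$ near $s=0$ with $\mu(0)=0$ and $\tilde g(s,\mu(s))\equiv 0$, so $\{(sv,\mu(s))\}$ is a branch of nontrivial solutions of $f=0$ bifurcating from the origin inside $X^H$. Each point of this branch is fixed by $H$, hence has isotropy containing $H$; since $H$ is itself an isotropy subgroup, for generic $s$ the isotropy is exactly (a conjugate of) $H$, which is the ``symmetry breaking'' conjugacy class. I expect the main point requiring care to be not any single computation but the bookkeeping that legitimizes the one-dimensional reduction — namely that $X^H$ is genuinely $f$-invariant and exactly one-dimensional, so that the full equivariant problem collapses to the scalar equation $\tilde g=0$; once this is secured, everything reduces to the transversality $c'(0)\neq 0$ and a routine application of the implicit function theorem.
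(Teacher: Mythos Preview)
Your proposal is correct and follows essentially the same route as the paper's sketch: invariance of $X^H$ under $f$, the scalar form $D_xf(0,\mu)=c(\mu)\,\mathrm{Id}$ from absolute irreducibility with the genericity condition $c'(0)\neq 0$, and then the implicit function theorem on the one-dimensional reduced equation. The only nuance the paper adds that you take for granted is the justification of the trivial branch $f(0,\mu)\equiv 0$: the paper derives this from irreducibility itself (since $f(0,\mu)$ is fixed by the full group and $X^{G^*\times O(2)}=\{0\}$), rather than assuming it from the outset.
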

\noindent Let us briefly recall the meaning of this theorem. By equivariance of $f$, any subspace of $X$ defined as $X^H$, $H$ a (closed) subgroup of $G^*\times O(2)$, is invariant under $f$. By the irreducibility assumption, if $H=G^*\times O(2)$, then $\{x\in X~|~H\cdot x=x\}=\{0\}$. Therefore $f(0,\mu)=0$ for all $\mu$. Now the assumption of absolute irreducibility implies that $\partial_xf(0,\mu)= a(\mu)Id_X$ where $a$ is a smooth real function such that $a(0)=0$ and generically $a'(0)\neq 0$. It follows that if now $H$ is a subgroup such that $\dim X^H=1$, then equation (\ref{eq:bifurcation}) restricted to this subspace reduces to a scalar equation $0=a'(0)\mu x + xh(x,\mu)$ (with $h(0,0)=0$), which has a branch of non trivial solutions by the implicit function theorem. 

\noindent {\em Remark 1:} the word "generically" can be interpreted as follows: the result will fail only if additional degeneracies are introduced in the equations. See \cite{chossat-lauterbach:00}  and \cite{golubitsky-stewart-etal:88} for a rigorous definition and proof. 

\noindent {\em Remark 2:} the assumption of absolute irreducibility is itself generic (in the above stated sense) for one-parameter steady-state bifurcation problems. A given irreducible representation of a compact or finite group need not be absolutely irreducible, this fact has to be proven. 

\noindent {\em Remark 3:} Theorem \ref{lemme:equivariantbranching} does not necessarily give an account of all possible branches of solutions of (\ref{eq:bifurcation}), see \cite{chossat-lauterbach:00}. It gives nevertheless a large set of generic ones. To go further it is necessary to compute the equivariant structure of $f$, or at least of its Taylor expansion to a sufficient order. The same is true if one wants to determine the stability of the bifurcated solutions, within the class of $\Gamma\times S^1$ periodic solutions of the initial evolution equation.

This theorem, together with the knowledge of the lattices and the (absolutely) irreducible representations of the groups $G^*$, gives us a mean to classify the periodic patterns which can occur in $\D\times \R^+_*$.
By analogy with the Euclidean case (bifurcation of spatially periodic solutions in the Euclidean space), we call {\em H-planforms} the solutions of a $U(1,1)$ (resp. $GL(2\R)$) invariant bifurcation problem in $\D$ (resp. $\DR$), which are invariant by a lattice group $\Gamma$ (resp. $\Gamma\times S^1$). 

Being interested here in the classification of solutions rather than in their actual computation for a specific equation, all remains to do is to determine the absolutely real irreducible representations of the group $G^*\times O(2)$  and the computation of the dimensions of the subspaces $X^H)$. For this purpose we can get rid of the $S^1$ component of the domain of periodicity. Indeed let $H=H_1\times H_2$ be an isotropy subgroup for the representation $R$ of $G^*\times O(2)$ acting in $X$. Then $X=V\otimes W$ and $R=S\otimes T$, where $S$ is an irreducible representation $G^*$ in $V$  and $T$ is an irreducible representation of $O(2)$ in $W$ \cite{serre:78}, and therefore $H_1$ acts in $V$ and $H_2$ acts in $W$. Now we have the following lemma, the proof of which is straightforward:

\begin{lemma} \label{lemme:equivariantbif}
${\rm dim}(X^H)=1$ if and only if  ${\rm dim}(V^{H_1})={\rm dim}(W^{H_2})=1$.
\end{lemma}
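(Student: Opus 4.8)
The plan is to prove both implications directly from the tensor‐product structure $X = V\otimes W$ with $H = H_1\times H_2$ acting as $H_1$ on $V$ and $H_2$ on $W$. The key observation is that the fixed‐point subspace of a product group acting on a tensor product factors: $X^H = (V\otimes W)^{H_1\times H_2} = V^{H_1}\otimes W^{H_2}$. Once this identity is established, the lemma is immediate, because $\dim(V^{H_1}\otimes W^{H_2}) = \dim(V^{H_1})\cdot\dim(W^{H_2})$, and a product of two non‐negative integers equals $1$ if and only if each factor equals $1$.

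First I would verify the factorization $X^H = V^{H_1}\otimes W^{H_2}$. The inclusion $\supseteq$ is trivial: if $v\in V^{H_1}$ and $w\in W^{H_2}$, then $(h_1,h_2)\cdot(v\otimes w) = (h_1 v)\otimes(h_2 w) = v\otimes w$ for all $(h_1,h_2)\in H_1\times H_2$, and by linearity this extends to the whole tensor product $V^{H_1}\otimes W^{H_2}$. For $\subseteq$, pick a basis $w_1,\dots,w_q$ of $W$ adapted to the decomposition $W = W^{H_2}\oplus U$, where $U$ is an $H_2$‐invariant complement (which exists since $H_2$ is compact, by averaging over Haar measure — or simply finite, since $G^*$ is finite and $O(2)$ compact). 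Any $x\in X$ decomposes uniquely as $x = \sum_j v_j\otimes w_j$ with $v_j\in V$. Applying an element $(1,h_2)\in H_1\times H_2$ gives $x = \sum_j v_j\otimes (h_2 w_j)$; comparing coefficients in the fixed basis, invariance under all of $H_2$ forces each $v_j$ with $w_j\notin W^{H_2}$ to vanish (after expanding $h_2 w_j$ in the basis and using that the $U$‐block of the $H_2$‐action has no nonzero fixed vector). Hence $x\in V\otimes W^{H_2}$. Repeating the argument with $(h_1,1)$ on the $V$ side yields $x\in V^{H_1}\otimes W^{H_2}$.

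Then the dimension count finishes the proof: $\dim(X^H) = \dim(V^{H_1})\dim(W^{H_2})$, so $\dim(X^H)=1$ iff $\dim(V^{H_1})=\dim(W^{H_2})=1$. (Note that $\dim(V^{H_1})=0$ would force $\dim(X^H)=0$, and likewise for $W^{H_2}$, so the case analysis is complete.)

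The only mildly delicate point — and hence what I would call the main obstacle, though it is genuinely routine — is the coefficient‐comparison step establishing $X^H\subseteq V^{H_1}\otimes W^{H_2}$, i.e. making precise why invariance of a tensor $\sum_j v_j\otimes w_j$ forces the components living over the non‐fixed part of $W$ to vanish. This is handled cleanly by choosing an $H_2$‐invariant complement to $W^{H_2}$ and noting that $0$ is the only $H_2$‐fixed vector there; the complete reducibility needed for this is guaranteed by compactness of $O(2)$ and finiteness of $G^*$. Everything else is formal manipulation of tensor products. This is exactly why the paper calls the proof "straightforward."
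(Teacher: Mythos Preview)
Your argument is correct and is precisely the standard verification that $(V\otimes W)^{H_1\times H_2}=V^{H_1}\otimes W^{H_2}$, from which the lemma follows by taking dimensions; the paper gives no proof beyond calling it ``straightforward,'' and your write-up is the natural way to fill in that omission. One small clean-up: the inclusion $X^H\subseteq V^{H_1}\otimes W^{H_2}$ is most transparently obtained by first intersecting with $\{1\}\times H_2$ (which, since the first factor acts trivially, immediately gives $V\otimes W^{H_2}$ via your basis argument) and then with $H_1\times\{1\}$, rather than invoking the invariant complement $U$ at the outset---but this is purely cosmetic.
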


Now, the irreducible real representations of $O(2)$ are well-known: they are either one dimensional (in which case every point is rotationally invariant) or two-dimensional, and in the latter case the only possible one dimensional subspaces $W^{H_2}$ are the reflection symmetry axes in $\R^2$ (which are all equivalent under rotations in $O(2)$). It follows that the classification is essentially obtained from the classification of the isotropy subgroups $H_1$ of the irreducible representations of $G^*$.

Once the irreducible representations are known, this can be achieved by applying the "trace formula" \cite{golubitsky-stewart-etal:88,chossat-lauterbach:00}:
\begin{proposition}
Let $H$ be a subgroup of $G^*$ acting in a space $V$ by a representation $\rho:~G^*\rightarrow Aut(V)$, then 
\begin{equation} \label{eq:formuletrace}
{\rm dim}(V^{H})=\frac{1}{|H|} \sum_{h\in H}{\rm tr}(\rho(h))
\end{equation}
\end{proposition}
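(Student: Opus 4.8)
The plan is to prove the trace formula by recognizing that $\dim(V^H)$ is the dimension of the image of the averaging (Reynolds) projection associated with the subgroup $H$, and then computing that dimension as a trace. First I would introduce the operator
\[
P_H = \frac{1}{|H|}\sum_{h\in H}\rho(h)\in \mathrm{End}(V).
\]
The key step is to check that $P_H$ is a projection onto $V^H$, i.e.\ that $P_H^2 = P_H$ and $\mathrm{Im}(P_H) = V^H$. For the idempotency, I would use the fact that for fixed $g\in H$ the map $h\mapsto gh$ is a bijection of $H$, so $\rho(g)P_H = P_H$ for every $g\in H$; averaging this identity over $g\in H$ gives $P_H P_H = P_H$. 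For the image: if $v\in V^H$ then each $\rho(h)v = v$ so $P_H v = v$, hence $V^H\subseteq \mathrm{Im}(P_H)$; conversely, applying $\rho(g)$ to $P_H v$ and using $\rho(g)P_H = P_H$ shows $P_H v$ is $H$-fixed, so $\mathrm{Im}(P_H)\subseteq V^H$. Thus $\mathrm{Im}(P_H) = V^H$ exactly.

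Next I would use the standard linear-algebra fact that the trace of an idempotent endomorphism equals the dimension of its image (over a field of characteristic zero): choosing a basis adapted to the direct sum decomposition $V = \mathrm{Im}(P_H)\oplus\ker(P_H)$, the matrix of $P_H$ is block diagonal with an identity block of size $\dim\mathrm{Im}(P_H)$ and a zero block, so $\mathrm{tr}(P_H) = \dim\mathrm{Im}(P_H) = \dim(V^H)$. On the other hand, by linearity of the trace,
\[
\mathrm{tr}(P_H) = \frac{1}{|H|}\sum_{h\in H}\mathrm{tr}(\rho(h)),
\]
and combining the two expressions for $\mathrm{tr}(P_H)$ yields \eqref{eq:formuletrace}.

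Since the representation here is real (these are the real irreducible representations of $G^*$ relevant to the bifurcation analysis), I would remark that the argument goes through verbatim over $\R$: idempotency and the image computation are purely algebraic, and the trace of a real idempotent still equals the rank of its image. The only point deserving a word of care is that $H$ is finite (which it is, being a subgroup of the finite group $G^*$), so the averaging $\frac{1}{|H|}\sum_{h\in H}$ makes sense and $|H|$ is invertible. There is no real obstacle in this proof; the ``hardest'' part is merely the bookkeeping verification that $P_H$ is idempotent with image $V^H$, which is the classical argument underlying character theory of finite groups, and it is indeed straightforward as the paper indicates.
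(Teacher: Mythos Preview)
Your proof is correct and is exactly the classical averaging-projection argument that underlies this trace formula. The paper does not actually give a proof of this proposition but simply states it with references to \cite{golubitsky-stewart-etal:88,chossat-lauterbach:00}, where precisely the argument you wrote appears.
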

Note that ${\rm tr}(\rho)$ is the character of the representation $\rho$ (a homomorphism $G^*\rightarrow \C$). What is really needed to apply the Equivaraint Branching Lemma is therefore the character table of the representations.
In the next section we investigate this classification in the case when the lattice is the regular octogonal group.

\section{A case study: the octagonal lattice}\label{section:octagon}
According to the comment following Lemma \ref{lemme:equivariantbif}, in all of this section we only consider the classification of isotropy subgroups satisfying the conditions of the equivaraint branching lemma in the Poincar\'e disc $\D$.

\subsection{The octagonal lattice and its symmetries}\label{subsection:octlattice}
Among all lattices in the hyperbolic plane, the octagonal lattice is the simplest one. As before we use the Poincar\'e disc representation of the hyperbolic plane. Then the octagonal lattice group $\Gamma$ is generated by the following four hyperbolic translations (boosts), see \cite{balazs-voros:86}:

\begin{equation}
g_0 = \left(\begin{array}{cc}1+\sqrt{2} & \sqrt{2+2\sqrt{2}} \\ \sqrt{2+2\sqrt{2}} & 1+\sqrt{2}\end{array}\right)
\end{equation}
and $g_j = r_{j\pi/4}g_0r_{-j\pi/4}$, $j=1,2,3$, where $r_\varphi$ indicates the rotation of angle $\varphi$ around the origin in $\D$. The fundamental domain of the lattice is a regular octagon $\Oct$ as shown in Figure. The opposite sides of the octagon are identified by periodicity, so that the corresponding quotient surface $\D/\Gamma$ is isomorphic to a "double doughnut" (genus two surface) \cite{balazs-voros:86}. Note that the same octagon is also the fundamental domain of another group, not isomorphic to $\Gamma$, obtained by identifying not the opposite sides but pairs of sides as indicated in Figure. This is called the Gutzwiller octagon. A procedure of classification of the lattices using graphs is presented in \cite{sausset-tarjus:07}. For us however there is no difference between the two kinds of octagons because we are really interested in the full symmetry group of the pattern generated by $\Gamma$, which includes the rotations $r_{j\pi/4}$, $j=1,\cdots ,8$, and therefore the boosts $r_{\pi/2} g_0^{-1}$, $g_1 r_{-\pi/2}$ and their conjugates by the rotation $r_\pi$, which are precisely the generators of the Gutzwiller lattice group.

We now determine what is the full symmetry group $G^*$ of the octagonal lattice, or equivalently, of the surface $\D/\Gamma$. Clearly the symmetry group of the octagon itself is part of it. This is the dihedral group $D_8$ generated by the rotation $r_{\pi/4}$ and by the reflection $\kappa$ through the real axis, but there is more. We have seen in Section \ref{section:lattices} that the group $G^*=\Lambda/\Gamma$, $\Lambda$ being the triangle group generated by reflections through the edges of a triangle $\tau$ which tiles (by the action of $\Lambda/\Gamma$) the surface $\D/\Gamma$. The smallest triangle (up to symmetry) with these properties is the one shown in Figure \ref{fig:triangle}. It has angles $\pi/8$, $\pi/2$ and $\pi/3$ at vertices $P=O$ (the center of $\D$), $Q$, $R$ respectively, and its area is, by Gauss-Bonnet formula, equal to $\pi/24$. There are exactly 96 copies of $\tau$ filling the octagon, hence $|G^*|=96$. The index two subgroup $G$ of orientation-preserving transformations in $G^*$ has therefore 48 elements. In \cite{broughton:91} it has been found that $G\simeq GL(2,3)$, the group of invertible $2\times 2$ matrices over the 3 elements field $\Z_3$. In summary:

\begin{proposition}
The full symmetry group $G^*$ of $\D/\Gamma$ is $G\cup\kappa G$ where $G\simeq GL(2,3)$ has 48 elements.
\end{proposition}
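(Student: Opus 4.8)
The plan is to obtain the order of $G^{*}$ from a Gauss--Bonnet tile count, then to isolate the orientation-reversing part to get $|G|$ and the coset decomposition, and finally to identify the isomorphism type of $G$. For the first step, recall from Section~\ref{section:lattices} that $G^{*}=\Lambda/\Gamma$ acts on the family of copies of the basic tile $\tau$ (the $T(2,3,8)$ triangle, with angles $\pi/8,\pi/2,\pi/3$) that fill the octagon $\Oct\simeq\D/\Gamma$, distinct copies meeting only along edges, so that $|G^{*}|=\mathrm{Area}(\Oct)/\mathrm{Area}(\tau)$. Gauss--Bonnet gives $\mathrm{Area}(\tau)=\pi-(\pi/8+\pi/2+\pi/3)=\pi/24$; and for the regular octagon of the $\{8,8\}$ tessellation the side-pairing identifies all eight vertices to one point around which eight octagon corners close up, hence each interior angle equals $2\pi/8=\pi/4$ and $\mathrm{Area}(\Oct)=6\pi-8(\pi/4)=4\pi$; therefore $|G^{*}|=4\pi/(\pi/24)=96$. (Equivalently, the Riemann--Hurwitz relation of Section~\ref{section:lattices} with $g=2$ and $(\ell,m,n)=(8,2,3)$ gives $|G|=48$.) Next, the orientation-reversing elements of $U(1,1)$ form the single coset $\kappa\,\SU$, so those of $G^{*}$ form one coset of the orientation-preserving subgroup $G$; hence $[G^{*}:G]=2$ and $|G|=48$, and since $G$ is then normal while $\kappa\notin G$, $\kappa^{2}=Id$ (the reflection $z\mapsto\overline{z}$ descends to $\D/\Gamma$ because $\Gamma$ is stable under complex conjugation), we get $G^{*}=G\cup\kappa G=G\rtimes\Z_{2}(\kappa)$.

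It remains to show $G\simeq GL(2,3)$. By the structure recalled in Section~\ref{section:lattices}, $G$ is an $(8,2,3)$ group, i.e.\ the quotient of the von Dyck triangle group $\Delta=\langle a,b,c\mid a^{8}=b^{2}=c^{3}=abc=1\rangle$ by the torsion-free normal subgroup $\Gamma$ of index $48$. I would therefore realize $GL(2,3)$, which has order $(3^{2}-1)(3^{2}-3)=48$, as a quotient of $\Delta$ of this kind and then invoke uniqueness. Concretely, let $A\in GL(2,3)$ be multiplication by a generator of $\mathbb{F}_{9}^{\times}$ regarded as a $\Z_{3}$-linear map of $\mathbb{F}_{9}\cong\Z_{3}^{2}$: then $A$ has order $8$ and determinant $-1$, so $A\notin SL(2,3)$. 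Choosing $C$ of order $3$ with $B:=(AC)^{-1}$ of order $2$, the subgroup $\langle A,C\rangle$ has order a multiple of $\mathrm{lcm}(8,3)=24$ and cannot equal the order-$24$ subgroup $SL(2,3)$ --- the unique subgroup of index two, whose Sylow $2$-subgroup $Q_{8}$ contains no element of order $8$ --- so $\langle A,C\rangle=GL(2,3)$. The assignment $a\mapsto A,\ b\mapsto B,\ c\mapsto C$ then gives a surjection $\Delta\twoheadrightarrow GL(2,3)$ with torsion-free kernel (no nontrivial power of $a,b,c$ dies, by the prescribed orders) of index $48$; its kernel determines, via $\D\to\D/\ker$, a genus-$2$ Riemann surface carrying $48$ holomorphic automorphisms, and since $\D/\Gamma$ is, up to conformal equivalence, the unique genus-$2$ surface attaining that maximum, the kernel is conjugate to $\Gamma$, whence $G=\Delta/\Gamma\simeq GL(2,3)$. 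The verification that $A,B,C$ have the stated orders and satisfy $abc=1$ is precisely the computation carried out in \cite{broughton:91}, to which I defer.

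The main obstacle is this last identification. The bookkeeping of the first two steps is routine, but the order $48$ together with $(8,2,3)$-generation does not by itself pin down $G$; one must either produce the explicit matrix model above and verify the presentation, or equivalently appeal to the classification of $(8,2,3)$ surface-group quotients --- or to the uniqueness of the Bolza surface among genus-$2$ Riemann surfaces with $48$ automorphisms. That step, here taken from \cite{broughton:91}, is the real content of the proposition.
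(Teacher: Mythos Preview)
Your argument is correct and matches the paper's closely. The order count via Gauss--Bonnet (area of $\tau$ equal to $\pi/24$, area of $\Oct$ equal to $4\pi$, hence $|G^{*}|=96$) and the index-two splitting $G^{*}=G\cup\kappa G$ are exactly what the paper does in the paragraph preceding the proposition. For the identification $G\simeq GL(2,3)$ the paper simply cites \cite{broughton:91} and then, immediately after the statement, writes down explicit matrices $\rho,\sigma,\epsilon\in GL(2,3)$ satisfying $\rho^{8}=\sigma^{2}=\epsilon^{3}=Id$ and $\rho\sigma\epsilon=Id$; you instead sketch a conceptual route through the $(8,2,3)$ von Dyck presentation and the uniqueness of the Bolza surface among genus-two surfaces with $48$ conformal automorphisms, before likewise deferring the matrix verification to \cite{broughton:91}. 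Your extra paragraph is a genuine (and correct) elaboration --- it explains \emph{why} the citation suffices rather than merely invoking it --- but it is not a different strategy: both proofs rest on the same tile count and the same external identification.
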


The isomorphism between $GL(2,3)$ and $G$ can be built as follows. We use the notation $\Z_3=\{0,1,2\}$ and we call $\rho$ the rotation by $\pi/4$ centered at $P$ (mod $\Gamma$), $\sigma$ the rotation by $\pi$ centered at $Q$ (mod $\Gamma$) and $\epsilon$ the rotation by $2\pi/3$ centered at $R$ (mod $\Gamma$).  In the notations of Section \ref{section:lattices}, $a=\sigma$, $b=\epsilon$, $c=\rho$, and $\rho\sigma\epsilon = 1$. Then we can take
\begin{equation*}
\rho = \left(\begin{array}{cc}0&2\\2&2\end{array}\right),~\sigma = \left(\begin{array}{cc}2&0\\0&1\end{array}\right),~\epsilon=\left(\begin{array}{cc}2&1\\2&0\end{array}\right)
\end{equation*}
since these matrices satisfy the conditions $\rho^8=\sigma^2=\epsilon^3=Id$ and $\rho\sigma\epsilon = Id$. Note that $\rho^4=-Id$ where $Id$ is the identity matrix. We shall subsequently use this notation. 
The group $GL(2,3)$, therefore the group $G$, is made of 8 conjugacy classes which we list in Table \ref{table:GL(2,3)}, indicating one representative, the number of elements in each class and their order. This result is classical and can be found, e.g., in \cite{lang:93}.

\begin{table}[ht]   
\begin{center}
\begin{tabular}{|c|c|c|c|c|c|c|c|c|} \hline representative & $Id$ & $\rho$ & $\rho^2$ & $-Id$ & $\rho^5$ & $\sigma$ & $\epsilon$ & $-\epsilon$ \\\hline order & 1 & 8 & 4 & 2 & 8 & 2 & 3 & 6 \\\hline \# elements & 1 & 6 & 6 & 1 & 6 & 12 & 8 & 8 \\\hline \end{tabular}
\end{center}
\caption{Conjugacy classes of $G\simeq GL(2,3)$}\label{table:GL(2,3)}
\end{table}

We now turn to the full symmetry group $G^*$ which is generated by $G$ and $\kappa$, the reflection through the real axis in $\D$ and which maps the octagon $\Oct$ to itself. We write $\kappa'=\rho\kappa$ the reflection through the side $PR$ of the triangle $\tau$. Note that (i) $\kappa'$ preserves also $\Oct$, (ii) $\kappa''=\epsilon\kappa'=\sigma\kappa$ is the reflection through the third side $QR$. 

In what follows we rely on the group algebra software GAP \cite{Gap}. For this we have first identified a presentation for $G\simeq GL(2,3)$ considered as an abstract group, then a presentation for $G^*$. The presentation for $GL(2,3)$ can be obtained with the command "P := PresentationViaCosetTable(GL(2,3))" and the relations are shown with the command "TzPrintRelators(P)":

\begin{lemma}
(i) As an abstract group, $G$ is presented with two generators $a$ and $b$ and three relations $a^2=1$, $b^3=1$ and $(abab^{-1}ab^{-1})^{2}=1$. 
(ii) As an abstract group, $G^*$ is presented with three generators $a$, $b$ and $c$ and six relations: the three relations for $G$ plus the three relations $c^2=1$, $(ca)^2=1$ and $(cb)^2=1$. \\
(iii) These abstract elements can be identified with automorphisms of $D/\Gamma$ as follows: $a=\sigma$, $b=\epsilon$ and $c=\kappa''$.
\end{lemma}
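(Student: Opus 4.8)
The plan is to verify the three claimed presentations and the identification of generators by a combination of hand computation and the group-algebra software GAP, exactly as the authors announce. The statement has three parts: (i) a two-generator, three-relation presentation of $G \simeq GL(2,3)$; (ii) a three-generator, six-relation presentation of $G^* = G \rtimes \Z_2(\kappa)$; and (iii) the identification $a = \sigma$, $b = \epsilon$, $c = \kappa''$ with concrete automorphisms of $\D/\Gamma$. I would treat (iii) as the conceptual anchor and prove it first, since it pins down which abstract elements the relations are supposed to encode.

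For part (iii): I would start from the geometric description already established in Section~\ref{section:lattices} and Section~\ref{subsection:octlattice}. There $\sigma$ is the rotation by $\pi$ about $Q$ (mod $\Gamma$), $\epsilon$ the rotation by $2\pi/3$ about $R$ (mod $\Gamma$), and I have the explicit matrix realizations of $\rho,\sigma,\epsilon$ in $GL(2,3)$ satisfying $\rho^8 = \sigma^2 = \epsilon^3 = Id$, $\rho\sigma\epsilon = Id$. The reflection $\kappa''$ is, by the remark preceding the lemma, the reflection through the side $QR$ of $\tau$, equal to $\epsilon\kappa' = \sigma\kappa$. So the content of (iii) is just that the abstract symbols $a,b,c$ in the presentation are to be read as these specific elements; once (i) and (ii) are proved as presentations of $G$ and $G^*$ respectively, (iii) is a matter of checking that $\sigma,\epsilon$ generate $G$ (immediate, since $\rho = (\sigma\epsilon)^{-1}$ and $\rho,\sigma$ visibly generate), that $\sigma,\epsilon,\kappa''$ generate $G^*$ (since $\kappa = \sigma^{-1}\kappa''$ recovers the extra reflection), and that these elements satisfy the listed relations.

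For parts (i) and (ii): first I would verify directly, in the concrete matrix groups, that the proposed relators are satisfied — for (i) that $a := \sigma = \left(\begin{smallmatrix}2&0\\0&1\end{smallmatrix}\right)$ and $b := \epsilon = \left(\begin{smallmatrix}2&1\\2&0\end{smallmatrix}\right)$ obey $a^2 = b^3 = (abab^{-1}ab^{-1})^2 = Id$ in $GL(2,\Z_3)$, and for (ii) that additionally $c := \kappa''$, acting as an orientation-reversing isometry, satisfies $c^2 = (ca)^2 = (cb)^2 = Id$ (the last two expressing that $\kappa''$ conjugates $\sigma$ and $\epsilon$ to their inverses, which one reads off geometrically: reflecting through $QR$ reverses the sense of rotations about $Q$ and about $R$, both of whose centers lie on the line $QR$). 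This shows each presented group surjects onto $G$, resp.\ $G^*$. The reverse inequality on orders — that the abstract group defined by the presentation has order at most $48$, resp.\ $96$ — is the step I expect to be the genuine obstacle if done by hand (a Todd–Coxeter coset enumeration), and it is precisely here that I would invoke GAP: the commands \texttt{PresentationViaCosetTable(GL(2,3))} and \texttt{TzPrintRelators} produce and simplify exactly such a presentation, and \texttt{Size} on the finitely-presented group confirms the order. Combining the surjection with the matching order gives the isomorphisms, and then part (iii) follows as above. I would present (i) and (ii) as GAP-verified, recording the explicit relators and the order check, and spell out by hand only the short geometric arguments that $\kappa''$ inverts $\sigma$ and $\epsilon$ and that $\{\sigma,\epsilon\}$, resp.\ $\{\sigma,\epsilon,\kappa''\}$, generate.
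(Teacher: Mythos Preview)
Your proposal is correct and follows the same GAP-based approach the paper takes; the paper in fact gives no formal proof of this lemma at all, simply citing the GAP commands \texttt{PresentationViaCosetTable(GL(2,3))} and \texttt{TzPrintRelators(P)} in the text preceding the statement. Your write-up is more thorough than what the paper provides---you make explicit the surjection/order-matching logic and the geometric reason why $(ca)^2=(cb)^2=1$ (namely that $Q$ and $R$ lie on the mirror of $\kappa''$)---but the underlying method is identical.
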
 

Applying the above lemma we find with GAP that the 96 elements group $G^*$ has 13 conjugacy classes which are listed in table \ref{table:G*_isomdirectes} for direct isometries and in table \ref{table:G*_inversions} for isometries which reverse orientation. GAP gives representatives of the conjugacy classes in the abstract presentation, which in general have complicated expressions. In some cases we have chosen other representatives, using in particular the 8-fold generator. To simplify some expressions in the tables \ref{table:G*_isomdirectes} to \ref{table:G*_sousgroupes} we also use the notations
$$\widehat{\sigma}=\epsilon\sigma\epsilon^{-1},~~\widetilde{\sigma}=\rho^2\sigma\rho^{-2},
$$
where $\widehat{\sigma}$ is the rotation by $\pi$ centered at $\widehat{S}$ (mod $\Gamma$) and $\widetilde{\sigma}$ is the rotation by $\pi$ centered at $\widetilde{S}$ (mod $\Gamma$), see figure \ref{fig:tesselation}.
\begin{table}   
\begin{center}
\begin{tabular}{|c|c|c|c|c|c|c|c|} \hline class number & 1 & 2 & 3 & 4 & 5 & 6 & 7 \\\hline representative & $Id$ & $\rho$ & $\rho^2$ & $-Id$ & $\sigma$ & $\epsilon$ & $-\epsilon$ \\\hline order & 1 & 8 & 4 & 2 & 2 & 3 & 6 \\\hline \# elements & 1 & 12 & 6 & 1 & 12 & 8 & 8 \\\hline \end{tabular}
\end{center}
\caption{Conjugacy classes of $G^*$, orientation preserving transformations}\label{table:G*_isomdirectes}
\end{table}

\begin{table}   
\begin{center}
\begin{tabular}{|c|c|c|c|c|c|c|c|} \hline class number & 8 & 9 & 10 & 11 & 12 & 13 \\\hline representative & $\kappa$ & $\kappa'$ & $\widehat{\sigma}\kappa$ & $\rho\widehat{\sigma}\kappa$  & $\epsilon\kappa$ & $-\epsilon\kappa$  \\\hline order & 2 & 2 & 8 & 4 & 12 & 12 \\\hline \# elements & 8 & 8 & 12 & 12 & 4 & 4 \\\hline \end{tabular}
\end{center}
\caption{Conjugacy classes of $G^*$, orientation reversing transformations}\label{table:G*_inversions}
\end{table}

We shall also need in Section \ref{section:octagonalplanforms} the list of subgroups of $G^*$ together with their decomposition in conjugacy classes (in $G^*$) in order to apply the trace formula (\ref{eq:formuletrace}). Here again we rely on GAP to obtain the necessary informations. 


Then representatives of each class are determined by inspection. These datas are listed in tables \ref{table:G_sousgroupes}  (subgroups of $G$) and \ref{table:G*_sousgroupes} (subgroups containing orientation reversing elements). The subgroups are listed up to conjugacy in $G^*$, the subgroups of order two are not listed. The rationale for the notations is as follows: 
\begin{itemize}
\item $G_0$ is an index 2 subgroup of $G$. Seen as a subgroup of $GL(2,3)$ it is $SL(2,3)$, the subgroup of determinant 1 matrices. It contains no order 2 elements except $-Id$ and no order 8 elements.
\item $C_n$, $\widetilde{C}_n$, $C'_n$, denote order $n$ cyclic groups. The notation $C_n$ is standard for the $n$-fold rotation group centered at the origin.
\item $D_n$ denotes a group isomorphic to the dihedral group of order $2n$, generated by an $n$-fold rotation and a reflection. Hence $D_8$ is the symmetry group of the octagon. The notation $\widetilde{D}_n$ is used for a $2n$ element group which has an $n$-fold rotation and a $2$-fold rotation as generators. For example $\widetilde{D}_8=<\rho,~\widehat{\sigma}>$, and one can verify that $\widehat{\sigma}\rho\widehat{\sigma}^{-1}=\rho^3$, which makes $\widetilde{D}_8$ a {\em quasidihedral group} (see \cite{gorenstein:80}). 
\item $Q_8$ is a usual notation for the 8 elements quaternionic group.
\item The notation $H_{n\kappa}$ indicates a group generated by the group $H_n$ and $\kappa$. Same thing if replacing $\kappa$ by $\kappa'$. For example $\widetilde{C}_{3\kappa'}$ is the 6 elements group generated by $\widetilde{C}_{3}$ and $\kappa'$.
\end{itemize}

\begin{table}  
\begin{center}
\begin{tabular}{|c|c|c|c|} \hline Subgroup & Order & Generators & Subclasses: representatives (\# elements) \\\hline  $G_0\simeq SL(2,3)$ & 24 & $<\rho^2,~\epsilon>$ & \{$Id$ (1), $-Id$ (1), $\rho^2$ (6), $\epsilon$ (8), $-\epsilon$ (8)\} \\\hline $\widetilde{D}_8$ & 16 & $<\rho,~\widehat{\sigma}>$ & \{$Id$ (1), $-Id$ (1), $\rho$ (4), $\rho^2$ (6), $\widehat{\sigma}$ (4)\} \\\hline $\widetilde{D}_6$ & 12 & $<-\epsilon,~\widetilde{\sigma}>$ & \{$Id$ (1), $-Id$ (1), $\widetilde{\sigma}$ (6), $\epsilon$ (2), $-\epsilon$ (2)\} \\\hline $C_8$ & 8 & $<\rho>$ & \{$Id$ (1), $-Id$ (1), $\rho$ (4), $\rho^2$ (2)\} \\\hline $Q_8$ & 8 & $<\rho^2,~\sigma\rho^2\sigma>$ & \{$Id$ (1), $-Id$ (1), $\rho^2$ (6)\}  \\\hline $\widetilde{D}_4$ & 8 & $<\rho^2,~\widehat{\sigma}>$ & \{$Id$ (1), $-Id$ (1), $\rho^2$ (2), $\widehat{\sigma}$ (4)\} \\\hline $\widetilde{C}_6$ & 6 & $<-\epsilon>$ & \{$Id$ (1), $-Id$ (1), $\epsilon$ (2), $-\epsilon$ (2)\} \\\hline $\widetilde{D}_3$ & 6 & $<\epsilon,~\widetilde{\sigma}>$ & \{$Id$ (1), $\epsilon$ (2), $\widetilde{\sigma}$ (3)\} \\\hline $C_4$ & 4 & $<\rho^2>$ & \{$Id$ (1), $-Id$ (1), $\rho^2$ (2)\} \\\hline $\widetilde{D}_2$ & 4 & $<-Id,\sigma>$ & \{$Id$ (1), $-Id$ (1), $\sigma$ (2)\} \\\hline $\widetilde{C}_3$ & 3 & $<\epsilon>$ & \{$Id$ (1), $\epsilon$ (2)\} \\\hline $C_2$ & 2 & $<-Id>$ & \{$Id$ (1), $-Id$ (1)\} \\\hline $\widetilde{C}_2$ & 2 & $<\sigma>$ & \{$Id$ (1), $\sigma$ (1)\} \\\hline \end{tabular}
\end{center}
\caption{Subgroups of $G\subset G^*$  (up to conjugacy). The last column provides datas about their conjugacy subclasses (in $G^*$).}\label{table:G_sousgroupes} 
\end{table}

\begin{table}   
\begin{center}
\begin{tabular}{|c|c|c|c|} \hline Subgroup & Order & Generators & Subclasses: representatives (\# elements) \\\hline  $G_{0\kappa}$ & 48 & $<G_0,~\kappa>$ & $G_0$ $\cup$ \{$\kappa$ (6), $\rho\widehat{\sigma}\kappa$ (2), $\epsilon\kappa$ (8),  $-\epsilon\kappa$ (8)\}   \\\hline  $G_{0\kappa'}$ & 48 & $<G_0,~\kappa'>$ & $G_0$ $\cup$ \{$\kappa'$ (12), $\widehat{\sigma}\kappa$ (12)\}  \\\hline  $\widetilde{D}_{8\kappa}$ & 32 & $<\widetilde{D}_8,~\kappa>$ & $\widetilde{D}_8$ $\cup$ \{$\kappa$ (6), $\kappa'$ (4), $\widehat{\sigma}\kappa$ (4), $\rho\widehat{\sigma}\kappa$ (2) \} \\\hline  $\widetilde{D}_{6\kappa'}$ & 24 & $<\widetilde{D}_6,~\kappa'>$ & $\widetilde{D}_6$ $\cup$ \{$\kappa'$ (6), $\epsilon\kappa$ (2), $-\epsilon\kappa$ (2), $\rho\widehat{\sigma}\kappa$ (2)\} \\\hline $C_{8\kappa}$ (=${D}_{8})$ & 16 & $<C_8,~\kappa>$ & $C_8$ $\cup$ \{$\kappa$ (4), $\kappa'$ (4)\} \\\hline $C'_{8\kappa}$ & 16 & $<\rho^2\sigma,~\kappa>$ & $C_8$\ $\cup$ \{$\kappa$ (2), $\rho\widehat{\sigma}\kappa$ (2), $\widehat{\sigma}\kappa$ (4)\} \\\hline ${Q}_{8\kappa}$ & 16 & $<{Q}_{8},~\kappa>$ & $Q_8$ $\cup$ \{$\kappa$ (6), $\rho\widehat{\sigma}\kappa$ (2)\} \\\hline ${Q}_{8\kappa'}$ & 16 & $<{Q}_{8},~\kappa'>$ & $Q_8$ $\cup$ \{$\kappa'$ (4), $\widehat{\sigma}\kappa$ (4)\} \\\hline $\widetilde{D}_{4\kappa}$ & 16 & $<\widetilde{D}_4,~\kappa>$ & $\widetilde{D}_4$ $\cup$ \{$\kappa$ (4), $\widehat{\sigma}\kappa$ (4)\} \\\hline $\widetilde{D}_{4\kappa'}$ & 16 & $<\widetilde{D}_4,~\kappa'>$ & $\widetilde{D}_4$ $\cup$  \{$\kappa$ (2), $\kappa'$ (4), $\rho\widehat{\sigma}\kappa$ (2)\}  \\\hline $C'_{12}$ & 12 & $<\epsilon\kappa>$ & $\widetilde{C}_6$ $\cup$  \{$\epsilon\kappa$ (2), $-\epsilon\kappa$ (2), $\rho\widehat{\sigma}\kappa$ (2)\}  \\\hline $\widetilde{C}_{6\kappa'}$ & 12 & $<\widetilde{C}_6, \kappa'>$ & $\widetilde{C}_6$ $\cup$  \{$\kappa'$ (6)\} \\\hline $C'_8$ &8 & $<\widehat{\sigma}\kappa>$ & $C_4$ $\cup$  \{$\widehat{\sigma}\kappa$ (4)\} \\\hline $C_{4\kappa}$ (= $D_4$) & 8 & $<C_4, \kappa>$ & $C_4$ $\cup$  \{$\kappa$ (4)\} \\\hline $C_{4\kappa'}$ & 8 & $<C_4, \kappa'>$ & $C_4$ $\cup$  \{$\kappa'$ (4)\} \\\hline $\widetilde{D}_{2\kappa}$ & 8 & $<\widetilde{D}_2, \kappa>$ & $\widetilde{D}_2$ $\cup$  \{$\kappa$ (2), $\kappa'$ (2)\} \\\hline $C'_{4\kappa}$ & 8 & $<C'_4, \kappa>$ & $C'_4$ $\cup$  \{$\rho^2$ (2), $\kappa$ (2)\} \\\hline $C'_{4\kappa'}$ & 8 & $<C'_4, \kappa'>$ & $C'_4$ $\cup$  \{$\sigma$ (2), $\kappa'$ (2)\} \\\hline $\widetilde{C}_{3\kappa'}$ & 6 & $<\widetilde{C}_3, \kappa'>$ & $\widetilde{C}_3$ $\cup$  \{$\kappa'$ (3)\} \\\hline $C'_4$ & 4 & $<\rho\widehat{\sigma}\kappa>$ & \{$Id$ (1), $-Id$ (1), $\rho\widehat{\sigma}\kappa$ (2)\} \\\hline $C_{2\kappa}$ & 4 & $<-Id,\kappa>$ & \{$Id$ (1), $-Id$ (1), $\kappa$ (2)\} \\\hline $C_{2k'}$ & 4 & $<-Id,\kappa'>$ & \{$Id$ (1), $-Id$ (1), $\kappa'$ (2)\} \\\hline $\widetilde{C}_{2k}$ & 4 & $<\sigma,\kappa>$ & \{$Id$ (1), $\sigma$ (1), $\kappa$ (1), $\kappa'$ (1)\} \\\hline $\widetilde{C}'_{2\kappa}$ & 4 & $<\widetilde{\sigma},\kappa>$ & \{$Id$ (1), $\widetilde{\sigma}$ (1), $\kappa$ (1), $\kappa'$ (1)\} \\\hline $C_{1\kappa}$ & 2 & $<\kappa>$ & \{$Id$ (1), $\kappa$ (1)\} \\\hline $C_{1\kappa'}$ & 2 & $<\kappa'>$ & \{$Id$ (1), $\kappa'$ (1)\} \\\hline \end{tabular}
\end{center}
\caption{Subgroups of $G^*$, not in $G$ (up to conjugacy). The last column provides datas about their conjugacy subclasses (in $G^*$). $\widetilde{C}_4$ is a subgroup conjugate to $C_4$ with generator $(\rho^2\sigma)^2$.}\label{table:G*_sousgroupes}
\end{table}

\subsection{The irreducible representations of $G^*$}
There are 13 conjugacy classes and therefore we know there are 13 complex irreducible representations of $G^*$, the characters of which will be denoted $\chi_j$, $j=1,...,13$. The character table, as computed by GAP, is shown in table \ref{table:caracteres}.

\begin{table}  
\begin{center}
\begin{tabular}{|c|c|c|c|c|c|c|c|c|c|c|c|c|c|}\hline Class \# & 1 & 2 & 3 & 4 & 5 & 6 & 7 & 8 & 9 & 10 & 11 & 12 & 13  \\\hline Representative & $Id$ & $\rho$ & $\rho^2$ & $-Id$ & $\sigma$ & $\epsilon$ & $-\epsilon$ & $\kappa$ & $\kappa'$ & $\widehat{\sigma}\kappa$ & $\rho\widehat{\sigma}\kappa$  & $\epsilon\kappa$ & $-\epsilon\kappa$  \\\hline\hline $\chi_1$ &  1 & 1 & 1 & 1 & 1 & 1 & 1 & 1 & 1 & 1 & 1 & 1 & 1 \\\hline $\chi_2$ & 1 & -1 & 1 & 1 & -1 & 1 & 1 & 1 & -1 & -1 & 1 & 1 & 1 \\\hline $\chi_3$ & 1 & -1 & 1 & 1 & -1 & 1 & 1 & -1 & 1 & 1 & -1 & -1 & -1 \\\hline $\chi_4$ & 1 & 1 & 1 & 1 & 1 & 1 & 1 & -1 & -1 & -1 & -1 & -1 & -1 \\\hline $\chi_5$ & 2 & 0 & 2 & 2 & 0 & -1 & -1 & -2 & 0 & 0 & -2 & 1 & 1 \\\hline $\chi_6$ & 2 & 0 & 2 & 2 & 0 & -1 & -1 & 2 & 0 & 0 & 2 & -1 & -1 \\\hline $\chi_7$ & 3 & 1 & -1 & 3 & -1 & 0 & 0 & -1 & -1 & 1 & 3 & 0 & 0 \\\hline $\chi_8$ & 3 & 1 & -1 & 3 & -1 & 0 & 0 & 1 & 1 & -1 & -3 & 0 & 0 \\\hline $\chi_9$ & 3 & -1 & -1 & 3 & 1 & 0 & 0 & 1 & -1 & 1 & -3 & 0 & 0 \\\hline $\chi_{10}$ & 3 & -1 & -1 & 3 & 1 & 0 & 0 & -1 & 1 & -1 & 3 & 0 & 0 \\\hline $\chi_{11}$ & 4 & 0 & 0 & -4 & 0 & -2 & 2 & 0 & 0 & 0 & 0 & 0 & 0 \\\hline $\chi_{12}$ & 4 & 0 & 0 & -4 & 0 & 1 & -1 & 0 & 0 & 0 & 0 & $\sqrt{3}$ & $-\sqrt{3}$ \\\hline $\chi_{13}$ & 4 & 0 & 0 & -4 & 0 & 1 & -1 & 0 & 0 & 0 & 0 & $-\sqrt{3}$ & $\sqrt{3}$ \\\hline \end{tabular}
\end{center}
\caption{Irreducible characters of $G^*$}\label{table:caracteres} 
\end{table}

The character of the identity is equal to the dimension of the corresponding representation. It follows from table \ref{table:caracteres} that there are 4 irreducible representations of dimension 1, 2 of dimension 2, 4 of dimension 3 and 3 of dimension 4. In the following we shall denote the irreducible representations by their character: $\chi_j$ is the representation with this character.

\begin{lemma}
All irreduclible representations of $G^*$ listed in table \ref{table:caracteres} are real absolutely irreducible.
\end{lemma}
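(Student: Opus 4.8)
The plan is to use the standard Frobenius–Schur indicator criterion: a complex irreducible representation with character $\chi$ is realizable over $\R$ (equivalently, is absolutely irreducible as a real representation) if and only if the Frobenius–Schur indicator
\[
\nu(\chi)=\frac{1}{|G^*|}\sum_{g\in G^*}\chi(g^2)
\]
equals $+1$; it equals $0$ for complex (non-self-dual) type and $-1$ for quaternionic type. Since $|G^*|=96$ and the character table (Table \ref{table:caracteres}) is known explicitly, the computation of $\nu(\chi_j)$ for each $j=1,\dots,13$ is a finite calculation. The first step is therefore to record, for each of the $13$ conjugacy classes, the class of the square of a representative, together with the class sizes already tabulated in Tables \ref{table:G*_isomdirectes} and \ref{table:G*_inversions}; this "squaring map" on conjugacy classes is the only genuinely new bookkeeping needed, and it can be read off from the presentation in the Lemma of Section \ref{section:octagon} (or simply from GAP).

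Next I would evaluate $\nu(\chi_j)=\tfrac{1}{96}\sum_{\text{classes }k}|C_k|\,\chi_j(g_k^2)$ for each $j$. For the four one-dimensional characters $\chi_1,\dots,\chi_4$ the answer is automatically $+1$ since these are real-valued homomorphisms into $\{\pm1\}$. For $\chi_5,\dots,\chi_{10}$ all character values are rational integers, so the representations are certainly self-dual and one only needs to check that the indicator is $+1$ rather than $-1$; a quick way is to note that several of these factor through the $D_8$ or $GL(2,3)$ quotient/subgroup structure, whose real representation theory is classical, but in any case the sum is short. The slightly more delicate cases are $\chi_{12}$ and $\chi_{13}$, whose values involve $\pm\sqrt{3}$ on classes $12$ and $13$: here one must check both that $\chi_{12}$ and $\chi_{13}$ are individually self-dual — which holds because classes $12$ and $13$ are each closed under inversion (an element of order $12$ and its inverse lie in the same class here, as can be verified from the class data) so the irrational values are real, hence $\bar\chi_{12}=\chi_{12}$ — and then that $\nu=+1$. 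Because the $\sqrt 3$ entries occur on classes whose squares land on classes $6$ and $7$ (the order-$3$ and order-$6$ classes), the irrational parts cancel in the indicator sum and one is left with a rational number, which one checks equals $1$.

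The main obstacle is purely organizational rather than conceptual: one must be certain of the squaring map $[g]\mapsto[g^2]$ on the thirteen classes, since an error there would spuriously change an indicator. I would therefore cross-check in two ways — first by the constraint $\sum_j \nu(\chi_j)\chi_j(1)=\#\{g\in G^*: g^2=1\}$, i.e. the total number of involutions plus the identity (computable from Tables \ref{table:G*_isomdirectes}–\ref{table:G*_inversions} as $1$ plus the sizes of all order-$2$ classes), and second by the orthogonality-type identity that the number of square roots of a fixed $g$ is $\sum_j \nu(\chi_j)\chi_j(g)$, which must be a nonnegative integer for every class. Once these consistency checks pass and every $\nu(\chi_j)=+1$, the lemma follows: each $\chi_j$ is of real type, so the corresponding complex irreducible module descends to a real irreducible module on which $\mathrm{End}_{\R G^*}$ is $\R$, i.e. it is absolutely irreducible over $\R$, exactly the property required for the hypothesis of the Equivariant Branching Lemma (Theorem \ref{lemme:equivariantbranching}).
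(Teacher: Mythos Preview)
Your approach via the Frobenius--Schur indicator is correct and takes a genuinely different route from the paper. The paper argues case by case by restriction to well-chosen subgroups: for the two-dimensional $\chi_5,\chi_6$ it restricts to $D_3=\langle\epsilon,\kappa'\rangle$ and recognizes the unique absolutely irreducible $2$-dimensional representation of $D_3$; for the three-dimensional $\chi_7,\dots,\chi_{10}$ it passes to the quotient $G/C_2\simeq\mathbb{O}$ and identifies the restriction to the tetrahedral subgroup $\mathbb{T}$ as its natural action on $\R^3$; for the four-dimensional $\chi_{11},\chi_{12},\chi_{13}$ it restricts to $D_8=\langle\rho,\kappa\rangle$, shows from the character values that this splits as a sum of two \emph{distinct} absolutely irreducible $2$-dimensional $D_8$-representations, and then uses irreducibility of the full $G^*$-action to force the two scalar blocks of any commuting endomorphism to coincide. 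Your indicator computation is more uniform --- a single mechanical pass over the character table (and in practice a one-line GAP call) --- whereas the paper's argument, though more ad hoc, avoids working out the squaring map on conjugacy classes and yields concrete absolutely irreducible subgroup actions that give some structural insight. One minor remark: your justification that $\chi_{12},\chi_{13}$ are self-dual via ``classes $12$ and $13$ are closed under inversion'' is unnecessary, since the character table already displays only real values; self-duality is immediate. Either argument suffices for the lemma.
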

\begin{proof}
This is clear for the one dimensional representations whose characters are real. \\
For the two dimensional representations, let us consider the dihedral subgroup $D_3$ generated by the 3-fold symmetry $\epsilon$ and the reflection $\kappa'$. The representation of $D_3$ in either representations planes of $\chi_5$ and $\chi_6$ have characters $\chi_j(\epsilon)=-1$ and $\chi_j(\kappa')=0$ ($j=5$ or $6$). These are the characters of the 2D irreducible representation of $D_3$, which is absolutely irreducible, and therefore the representations $\chi_5$ and $\chi_6$ of $G^*$ are also absolutely irreducible. Indeed if any real linear map which commutes with the elements of a subgroup is a scalar multiple of the identity, this is a fortiori true for the maps which commute with the full group. \\
For the three dimensional representations $\chi_7$ to $\chi_{10}$, let us first remark that if we write $C_2=\{Id,-Id\}$, then $G/C_2\simeq\mathbb{O}$, the octahedral group. Its subgroup $\mathbb{T}$ (tetrahedral group) can easily be identified with the 12 elements group generated by the "pairs" $\{Id,-Id\}$, $\{\epsilon,-\epsilon\}$ and $\{\rho^2, -\rho^2\}$. Now we consider the representation of $G$ defined by the action of $\chi_j$ restricted to $G$ (for each 3D $\chi_j$).  One can check easily from the character table that it projects onto a representation of $G^*/C_2$, the character of which is given by the value of $\chi_j$ on the corresponding conjugacy classes, and in particular the character for the representation of the group $\mathbb{T}$ is given, for any $j=7$ to $10$, by $\chi_j(\{Id,-Id\})=3$, $\chi_j(\{\epsilon,-\epsilon\})=0$ and $\chi_j(\{\rho^2, -\rho^2\})=-1$. But this is the character of the irreducible representation of $\mathbb{T}$  \cite{miller:72}, which is absolutely irreducible (natural action of  $\mathbb{T}$ in $\R^3$). Hence the three dimensional representations of $G^*$ are absolutely irreducible by the same argument as above.\\
It remains to prove the result for the four dimensional representations $\chi_{11}$, $\chi_{12}$ and $\chi_{13}$. For this we consider the action of the group $D_8$ generated by $\rho$ and $\kappa$, as defined by either one of these 4D irreducible representations of $G^*$. We observe from the character table that in all cases, the character of this action is $\chi(\rho)=0$, $\chi(\rho^2)=0$, $\chi(-Id)=-4$, $\chi(\rho^3)=0$ ($\rho$ and $\rho^3$ are conjugate in $G^*$), and $\chi(\kappa)=\chi(\kappa')=0$. We can determine the isotypic decomposition for this action of $D_8$ from these character values. The character tables of the four one dimensional and three two dimensional irreducible representations of $D_8$ can be computed easily either by hand (see \cite{miller:72} for the method) or using a computer group algebra software like GAP. For all one dimensional characters the value at $-Id$ is $1$, while for all two dimensional characters, the value at $-Id$ is $-2$. Since $\chi(-Id)=-4$, it is therefore not possible to have one dimensional representations in this isotypic decomposition. It must therefore be the sum of two representations of dimension 2. Moreover, since $\chi(\rho)=\chi(\rho^2)=\chi(\rho^3)=0$, it can't be twice the same representation. In fact it must be the sum of the representations whose character values at $\rho$ are $\sqrt{2}$ and $-\sqrt{2}$ respectively. Now, these representations are absolutely irreducible (well-know fact which is straightforward to check), hence any $D_8$-equivariant matrix which commutes with this action decomposes into a direct sum of two scalar $2\times 2$ matrices $\lambda I_2$ and $\mu I_2$ where $\lambda$ and $\mu$ are real. But the representation of $G^*$ is irreducible, hence $\lambda=\mu$, which proves that it is also absolutely irreducible. 
\end{proof}

\subsection{The octagonal H-planforms \label{section:octagonalplanforms}}
We can now apply Lemma \ref{lemme:equivariantbif} in order to determine the H-planforms for the octagonal lattice.

\begin{theorem}\label{theorem:repirr}
The irreducible representations of $G^*$ admit H-planforms with the following isotropy types:
\begin{itemize}
\item $\chi_1$: $G^*$;
\item $\chi_2$: $G_{0\kappa}$;
\item $\chi_3$: $G_{0\kappa'}$;
\item $\chi_4$: $G\simeq GL(2,3)$;  
\item $\chi_5$: $\widetilde{D}_8$, $Q_{8\kappa'}$;
\item $\chi_6$: $\widetilde{D}_{8\kappa}$;
\item $\chi_7$: $C'_{8\kappa}$, $C'_{12}$, $C_{4\kappa'}$;
\item  $\chi_8$: $C_{8\kappa}$, $\widetilde{C}_{6\kappa'}$, $\widetilde{D}_{2\kappa}$;
\item $\chi_9$: $\widetilde{D}_6$, $\widetilde{D}_{4\kappa}$;
\item $\chi_{10}$: $\widetilde{D}_{6\kappa'}$, $\widetilde{D}_{4\kappa'}$;
\item $\chi_{11}$: $\widetilde{C}_{2\kappa}$, $\widetilde{C}'_{2\kappa}$;
\item $\chi_{12}$: $\widetilde{D}_3$, $\widetilde{C}_{3\kappa'}$, $\widetilde{C}_{2\kappa}$, $\widetilde{C}'_{2\kappa}$;
\item $\chi_{13}$: $\widetilde{D}_3$, $\widetilde{C}_{3\kappa'}$, $\widetilde{C}_{2\kappa}$, $\widetilde{C}'_{2\kappa}$;
\end{itemize}

\end{theorem}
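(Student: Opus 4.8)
The plan is to turn the statement into a finite computation inside the finite group $G^*$, resting on three facts already available: Lemma \ref{lemme:equivariantbif} together with the remark following it, which allows us to discard the $O(2)$-factor and work directly with the irreducible representations of $G^*$ acting on $\D$; the trace formula \eqref{eq:formuletrace}; and the preceding lemma asserting that all thirteen characters $\chi_j$ are real and absolutely irreducible, so that the hypotheses of the Equivariant Branching Lemma (Theorem \ref{lemme:equivariantbranching}) are met for each of them. Fixing the representation $V_j$ with character $\chi_j$, an isotropy subgroup $H\subset G^*$ supports an H-planform exactly when $\dim V_j^H=1$; hence the whole content reduces to (i) computing $\dim V_j^H$ for every conjugacy class of subgroups $H$ and every $j$, and (ii) deciding which of the subgroups with a one-dimensional fixed subspace are genuine isotropy subgroups.

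For step (i) I would evaluate the trace formula in the form
\[
\dim V_j^H=\frac{1}{|H|}\sum_{c}\,n_c(H)\,\chi_j(c),
\]
where $c$ ranges over the $G^*$-conjugacy classes, $n_c(H)$ is the number of elements of $H$ lying in $c$, and $\chi_j(c)$ is read from Table \ref{table:caracteres}. The integers $n_c(H)$ for each $H$ up to conjugacy are precisely the data tabulated in the last columns of Tables \ref{table:G_sousgroupes} and \ref{table:G*_sousgroupes} (one checks these sum to $|H|$), so each $\dim V_j^H$ is a one-line evaluation; running over all listed subgroups and all $j=1,\dots,13$ yields, for each $\chi_j$, the finite set of subgroups with $\dim V_j^H=1$. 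As partial sanity checks one can recover a number of these values by restricting $V_j$ to the familiar subgroups $D_3$, $D_8$ and $\mathbb{T}$ already used in the proof that the $\chi_j$ are absolutely irreducible, or simply recompute everything with GAP.

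Step (ii) is where care is required, and I expect it to be the main obstacle. Having $\dim V_j^H=1$ does not by itself make $H$ an isotropy subgroup: if $x$ spans $V_j^H$, the actual isotropy subgroup $G^*_x\supseteq H$ still satisfies $\dim V_j^{G^*_x}=1$ (it contains $x$), so $G^*_x$ is the \emph{largest} subgroup fixing the line $\mathbb{R}x$. Thus $H$ with $\dim V_j^H=1$ is an isotropy subgroup if and only if it is maximal, for inclusion, among the subgroups with $\dim V_j=1$ — equivalently $\dim V_j^K=0$ for every $K$ with $H\subsetneq K\subseteq G^*$. Carrying this out needs the inclusion poset of conjugacy classes of subgroups of $G^*$ (again from GAP); for each $\chi_j$ one then keeps only the maximal elements of $\{H:\dim V_j^H=1\}$, and non-conjugate maximal classes, which in general fix non-conjugate lines, give the distinct isotropy types listed in the theorem. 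Completeness of the list follows from the exhaustiveness of the GAP subgroup enumeration, and for each retained $H$ absolute irreducibility of $\chi_j$ lets Theorem \ref{lemme:equivariantbranching} produce the bifurcating branch of H-planforms in $X^H=V_j^H$, which finishes the argument.
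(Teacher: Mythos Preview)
Your proposal is correct and follows essentially the same route as the paper: reduce to $G^*$, evaluate the trace formula \eqref{eq:formuletrace} against the subgroup data of Tables \ref{table:G_sousgroupes}--\ref{table:G*_sousgroupes} and the character table \ref{table:caracteres}, then keep the maximal classes with one-dimensional fixed subspace. The paper's own proof is terser on your step (ii) but adds two computational shortcuts you might want to borrow: the observation that $\dim V_j^H=0$ propagates upward to all overgroups of $H$, and a small lemma that for $j=11,12,13$ any subgroup containing $-Id$ automatically has $\dim V_j^H=0$ (since $\chi_j(-g)=-\chi_j(g)$ on the relevant classes), which immediately eliminates most of the tables in the four-dimensional cases.
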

\begin{proof}
For the one dimensional representations of $G^*$ this is straightforward: each element whose character image is $+1$ belongs to the isotropy group. The result follows therefore directly from the character table and list of subgroups of $G^*$. For the higher dimensional irreducible representations we need to find those isotropy subgroups $H$ such that (see \eqref{eq:formuletrace}):
$$
1 = {\rm dim}(V^H) = \frac{1}{|H|} \sum_{h\in H}\chi_j(h)
$$
where $\chi_j$ denotes the character of the $j$-th irreducible representation.
This can be done in a systematic way by using the character table \ref{table:caracteres} and applying the datas on subgroups and their conjugacy classes listed in \ref{table:G_sousgroupes} and \ref{table:G*_sousgroupes}. The calculations are cumbersome but can be slightly simplified by noting that if $H\subset H'$ and ${\rm dim}(V^H)=0$ (a case which occurs many times), then not only $H$ is not symmetry breaking but also $H'$, since $H\subset  H'\Rightarrow V^{H'}\subset V^H$. \\
The following lemma is also useful as it eliminates most candidates in the case of 4D representations:
\begin{lemma}
If a subgroup $H$ contains $-Id$, then for $j=11, 12$ or $13$, one has $\sum_{h\in H}\chi_j(h)=0$.
\end{lemma}  
\noindent {\em Proof of the lemma.} In all three cases the result follows from the relations: (i) $\chi_j(-Id)=-\chi_j(Id)$, (ii) $\chi(-\epsilon)=-\chi(\epsilon)$ and $\chi(-\epsilon\kappa)=-\chi(\epsilon\kappa)$, (iii) $\chi_j(s)=0$ for all $s$ which is not conjugate to one in (i) or (ii).  \\
\end{proof}

For the lower dimensional representations it is possible to reduce the problem to known situations and to provide bifurcation diagrams without any further calculations. The next theorem provides these informations. Stability of the solutions has to be understood here with respect to perturbations with the same octagonal periodicity in $\D$ and under the condition that, in $L^2(\D/\Gamma)$, the corresponding representation corresponds to the "most unstable" modes ("neutral modes" at bifurcation). 

\begin{theorem}\label{theorem:bifdiagram}
For the one and two dimensional representations, the generic bifurcation diagrams have the following properties:
\begin{itemize}
\item $\chi_1$: transcritical branch, exchange of stability principle holds.
\item $\chi_2$, $\chi_3$ and $\chi_4$: pictchfork bifurcation, exchange of stability principle holds.
\item $\chi_5$: same as bifurcation with hexagonal symmetry in the plane, see Figure \ref{fig: diagbif_D6}.
\item $\chi_6$: same as bifurcation with triangular symmetry in the plane. In particular H-planforms are always unstable on both sides of the bifurcation point unless the subcritical branch bends back sufficiently near the bifurcation point (see Figure \ref{fig: diagbif_D3}).
\end{itemize}
\end{theorem}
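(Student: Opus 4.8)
The branches themselves are furnished by the Equivariant Branching Lemma via the isotropy types of Theorem~\ref{theorem:repirr}; what remains is to describe the shape and stability of each diagram, and the plan is, in every case, to restrict the reduced equation \eqref{eq:bifurcation} to the relevant $G^*$-irreducible subspace $V$ and recognise a classical low-dimensional normal form. For $\chi_1$ the group $G^*$ acts trivially on $V\simeq\R$, so \eqref{eq:bifurcation} becomes a scalar equation $g(x,\mu)=0$ with $g(0,\mu)=0$ (the rest state persists) and $\partial_x g(0,0)=0$; writing $g(x,\mu)=x\bigl(a(\mu)+bx+O(x^2)\bigr)$ with $a(0)=0$, the genericity hypotheses $a'(0)\neq 0$, $b\neq 0$ give a transcritical branch $x\simeq -a(\mu)/b$, and comparing $\partial_x g$ along this branch with $a(\mu)$ along $x\equiv 0$ yields the exchange of stabilities. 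For $\chi_2$, $\chi_3$, $\chi_4$ the space $V$ is again one-dimensional, but Table~\ref{table:caracteres} shows that each of these characters takes the value $-1$ on some conjugacy class; the corresponding group element acts on $V$ as $-Id$, so equivariance forces $g(-x,\mu)=-g(x,\mu)$. The quadratic term then drops out and one gets a pitchfork $g(x,\mu)=x\bigl(a(\mu)+cx^2+O(x^4)\bigr)$, whose super/subcritical dichotomy and exchange of stabilities are the textbook ones.

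For $\chi_5$ and $\chi_6$ the first step is to read off the image of $G^*$ in $O(2)$ from the character values. For $\chi_6$ one finds from Table~\ref{table:caracteres} that $-Id$, $\rho^2$ and $\kappa$ have character $2=\dim V$, hence act as the identity, while $\epsilon$ has character $-1$ (so maps to the rotation by $2\pi/3$) and $\kappa'$ has character $0$ (a reflection); thus the image is the dihedral group $D_3$ acting on $\R^2$ as the symmetry group of the equilateral triangle. For $\chi_5$ one finds instead $\kappa\mapsto -Id$ (character $-2$), while $\epsilon$ still maps to the rotation by $2\pi/3$ and $\kappa'$ (character $0$) to a reflection, and $\epsilon\kappa$ (character $1$) to the rotation by $\pi/3$; the generated subgroup therefore contains $C_6$ together with a reflection, i.e.\ it is the dihedral group $D_6$ acting on $\R^2$ as the symmetry group of the regular hexagon. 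Since $\chi_5$ and $\chi_6$ were already shown to be absolutely irreducible, \eqref{eq:bifurcation} restricted to $V$ is exactly a generic one-parameter $D_6$- (resp.\ $D_3$-) equivariant steady-state bifurcation on $\R^2$, whose analysis is classical (see \cite{golubitsky-stewart-etal:88,chossat-lauterbach:00}): for $D_3$ the quadratic equivariant $\bar z^2$ is present, producing the transcritical diagram of Figure~\ref{fig: diagbif_D3} in which the bifurcated H-planform has a positive transverse eigenvalue and so is unstable on both sides of the bifurcation point unless the subcritical branch bends back near the origin, while for $D_6$ one recovers the bifurcation diagram with hexagonal symmetry of Figure~\ref{fig: diagbif_D6}.

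The reductions for $\chi_1$ to $\chi_4$ are routine. The point requiring care is the analysis of $\chi_5$ and $\chi_6$: one must correctly identify the planar symmetry groups from the character table and then check, using the absolute irreducibility already established and the fact that the $O(2)$-factor has been discarded in this section, that no extra equivariant terms or degeneracies enter the reduced equation beyond those present in the classical planar problems — in particular that for $\chi_5$ the cubic truncation is, just as in the planar hexagonal case, rotationally symmetric so that the $D_6$ symmetry-breaking, and hence the selection between the two branch directions, only shows up at fifth order. Once this is verified, the stability statements transcribe verbatim from the known $D_3$ and $D_6$ results.
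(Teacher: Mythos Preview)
Your argument is correct and follows the same overall strategy as the paper: for the one-dimensional characters you identify the presence or absence of a sign-reversing element exactly as the paper does, and for $\chi_5$, $\chi_6$ you reduce the $G^*$-equivariant problem on $V\simeq\R^2$ to a classical planar dihedral bifurcation and then quote \cite{golubitsky-stewart-etal:88}. The only real difference is in how the effective planar symmetry group is pinned down. The paper identifies the \emph{kernel} of each two-dimensional representation (the principal isotropy $Q_8$ for $\chi_5$, $Q_{8\kappa}$ for $\chi_6$) and uses the structural fact $GL(2,3)\simeq Q_8\rtimes D_3$ to name the quotient $G^*/Q_8\simeq D_6$, resp.\ $G^*/Q_{8\kappa}\simeq D_3$; you instead compute the \emph{image} in $O(2)$ element by element from the character values. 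Both routes are valid and yield the same answer, yours being the more elementary while the paper's makes transparent why the two isotropy types listed for $\chi_5$ in Theorem~\ref{theorem:repirr} correspond to the two non-conjugate reflection axes of $D_6$.

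One small point to tighten: from the generators you examine you show that the image \emph{contains} $D_6$ (resp.\ $D_3$), but a strictly larger image would further constrain the equivariants and change the normal form (e.g.\ for $D_{12}$ the $\bar z^5$ term disappears). You should close this by checking---for instance by counting the elements $g$ with $\chi_j(g)=\dim V$, or simply by observing that $\rho^2$ lies in the kernel so the image of $\rho$ has order at most $2$---that the kernel has order $8$ (resp.\ $16$), so the image is exactly $D_6$ (resp.\ $D_3$). Your remark that the $D_6$ cubic truncation is $O(2)$-symmetric, with branch selection at fifth order, is correct and is precisely what underlies the diagram of Figure~\ref{fig: diagbif_D6}, though the paper simply refers to \cite{golubitsky-stewart-etal:88} for this.
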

\begin{proof}
(i) For the one dimensional representations, this follows from classical bifurcation theory: in $\chi_1$ there is no symmetry breaking, hence generically the bifurcation is of transcritical type and the trivial and bifurcated solutions exchange stability at the bifurcation point. In the three other cases, a symmetry exists which acts by reversing direction on the axis as can be seen from Table \ref{table:caracteres}. For example in $\chi_2$ this can be taken as $\sigma$ (but also $\kappa'$ does the same thing). Hence the bifurcation is of pitchfork type and exchange of stability holds. \\
(ii) For $\chi_5$, note that the subgroup $Q_8$ acts trivially on any point of this plane (${\rm dim}(V^{Q_8})=2$). In fact $Q_8$ is the isotropy group of the principal stratum in this group action. Now, $G\simeq GL(2,3)=Q_8\ltimes D_3$, hence $G^*/Q_8\simeq D_3\ltimes \Z_2\simeq D_6$, the symmetry group of an hexagon. This group action is isomorphic to the natural action of $D_6$ in the plane. It follows that the problem reduces in this case to a bifurcation problem with the action of $D_6$ in the plane, see \cite{golubitsky-stewart-etal:88} for details. \\
(iii)  In the case of $\chi_6$, the maximal subgroup which keeps every point in the plane fixed is the 16 element group $Q_{8\kappa}$. It follows that the problem reduces to a bifurcation problem in the plane with symmetry $G^*/Q_{8\kappa} \simeq D_3$. Details on this bifurcation can be found in  \cite{golubitsky-stewart-etal:88}.
\end{proof}
\begin{figure}[htp]
\centering
\includegraphics[width=0.8\textwidth]{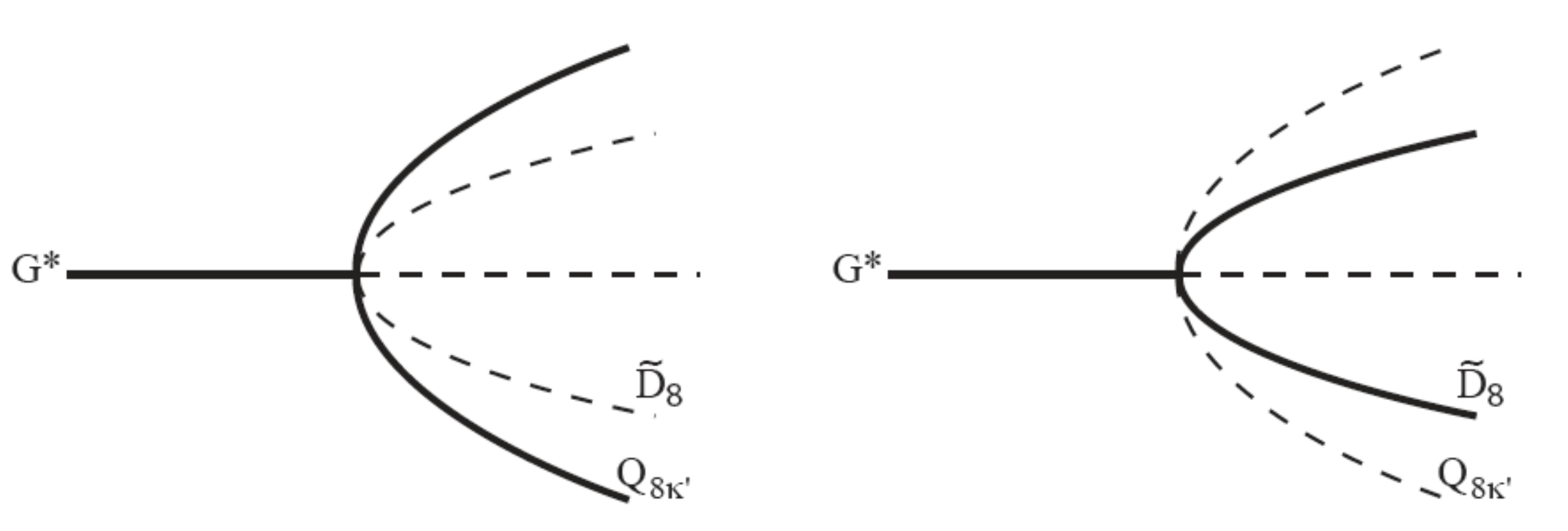}
\caption{Bifurcation diagram for the case $\chi_5$. Dotted lines: unstable branches, .}
\label{fig: diagbif_D6}
\end{figure}
\begin{figure}[htp]
\centering
\includegraphics[width=0.4\textwidth]{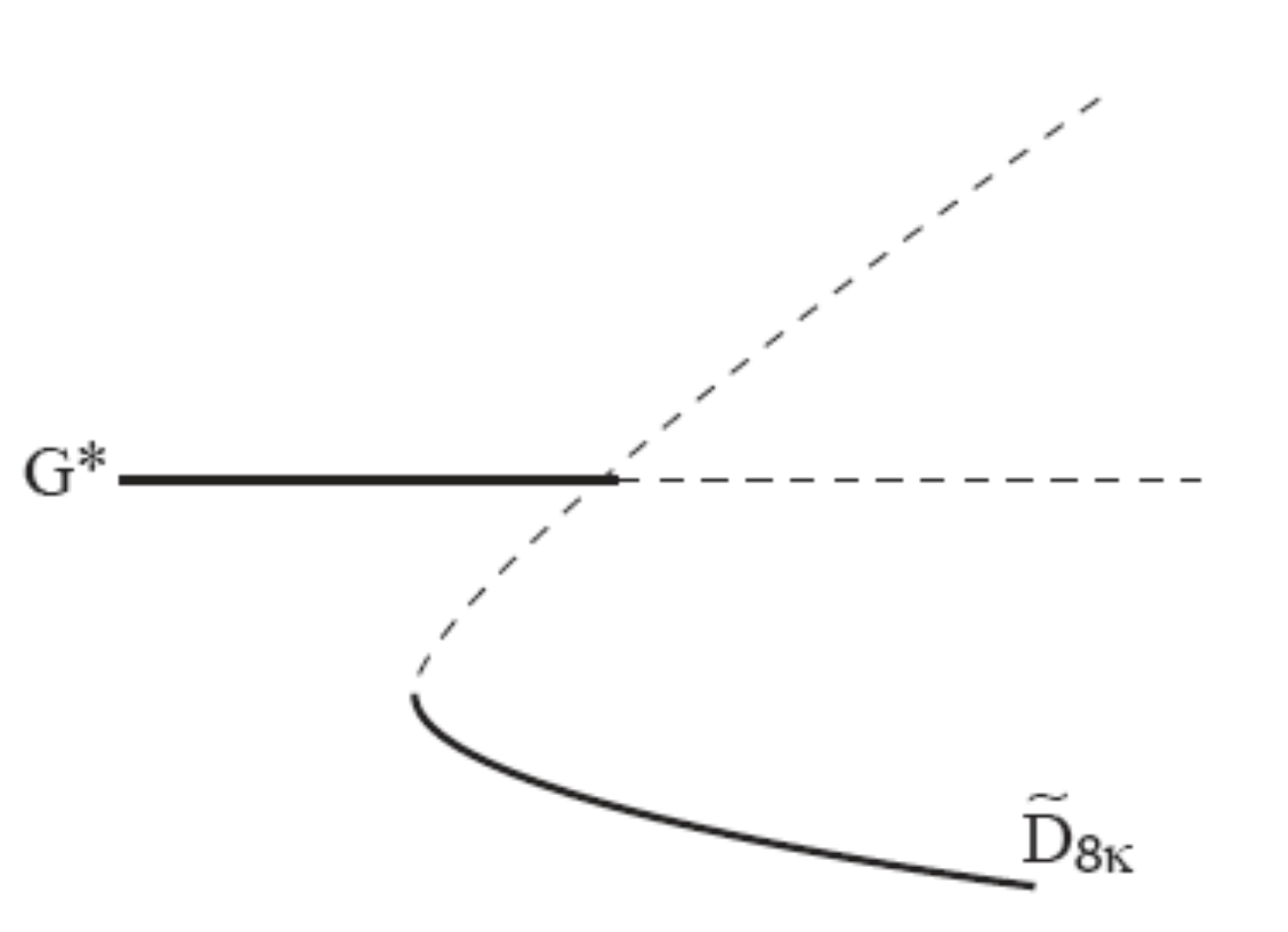}
\caption{Bifurcation diagram for the case $\chi_6$. Dotted lines: unstable branches.}
\label{fig: diagbif_D3}
\end{figure}

{\em Remark 1:} A similar reduction can be made with the 3 dimensional representations. Indeed it can be seen that the principal isotropy type (which keeps all points in the three dimensional representation space fixed) is $C'_4$ for $\chi_7$ and $\chi_{10}$, and $C_2$ for   $\chi_8$ and $\chi_9$. In the first case, this leads to reducing the problem to one with $G^*/C'_4\simeq \mathbb{O}$ symmetry, where       $\mathbb{O}$ is the 24 elements group of direct symmetries (rotations) of a cube. However there are two irreducible representations of dimension 3 of $\mathbb{O}$ \cite{miller:72}, and it turns out that $\chi_7$ corresponds to one of them (the "natural" action of $\mathbb{O}$ in $\R^3$) while $\chi_{10}$ corresponds to the other representation. This explains why there are 3 types of H-planforms for $\chi_7$ and only 2 for $\chi_{10}$. Similarly, the principal isotropy type for $\chi_8$ and $\chi_9$ is the two element group $C_2$, and $G^*/C_2\simeq \mathbb{O}\ltimes \Z_2$. Then the same remark holds for these cases as for the previous ones. 

{\em Remark 2:} In the 4 dimensional cases, the principal isotropy type is the trivial group, hence no reduction can be made. Bifurcation in this case (and in the 3 dimensional cases as well) will be the subject of a forthcoming paper.

\section{Computing the H-planforms} \label{section:computing}


It follows from the definition that H-planforms are eigenfunctions of the Laplace-Beltrami operator in $\D$ which satisfy certain isotropy conditions: (i) being invariant under a lattice group $\Gamma$ and (ii) being invariant under the action of an isotropy subgroup of the symmetry group of the fundamental domain $\D/\Gamma$ (mod $\Gamma$). Therefore in order to exhibit H-planforms, we need first to compute eigenvalues and eigenfunctions of $\triangle$ in $\D$, and second to find those eigenfunctions which satisfy the desired isotropy conditions. In this section we tackle this question in the case where the lattice has the regular octagon as a fundamental domain.

Over the past decades, computing the eigenmodes of the Laplace-Beltrami operator on compact manifolds has received much interest from physicists. The main applications are certainly in quantum chaos \cite{balazs-voros:86,aurich-steiner:89,aurich-steiner:93,schmit:91,cornish-turok:98} and in cosmology \cite{inoue:99,cornish-spergel:99,lehoucq-weeks-etal:02}.

To our knowledge, the interest in such computation was sparkled by the study of classical and quantum mechanics on surfaces of constant negative curvature, and the connections between them (for an overview on the subject see \cite{balazs-voros:86}). To be more precise, \textit{quantum chaology} can be defined as the study of the semiclassical behaviour characteristic of systems whose classical motion exhibits chaos, for example the classical free motion of a mass point on a compact surface of constant negative curvature (as it is the most chaotic possible). In \cite{balazs-voros:86,aurich-steiner:89,cornish-turok:98}, authors studied the time-independent Schroedinger equation on the compact Riemannian surface of constant curvature -1 and genus 2, which is topologically equivalent to the regular octagon with four periodic boundary conditions. This is the same as solving the eigenvalue problem for $\Gamma$ invariant eigenmodes in $\D$. The first computations have been performed using the finite element method on "desymmetrised" domains of the hyperbolic octagon with a mixture of Dirichlet and Neumann boundary conditions \cite{balazs-voros:86,schmit:91}. We explain the procedure of desymmetrisation in the next subsection. Aurich and Steiner in \cite{aurich-steiner:89} were the first to compute the eigenmodes on the whole octagon with periodic boundary conditions. They began with the finite element method of type $P2$ and were able to exhibit the first 100 eigenvalues. In \cite{aurich-steiner:93}, the same authors used the direct boundary-element method on an asymmetric octagon to reach the 20 000th eigenvalue. 

There is also a strong interest of cosmologists for ringing the eigenmodes of the Laplace-Beltrami operator on compact surfaces. Indeed this is necessary in order to evaluate the cosmic microwave background anisotropy in multiply-connected compact cosmological models. For some models, this computation is performed on a compact hyperbolic 3-space called the Thurston manifold, and Inoue computed the first eigenmodes of Thurston space such that each corresponding eigenvalue $\lambda$ satisfies $\lambda\leq 10$ 
 with the direct boundary-element method \cite{inoue:99}. For 3-dimensional spherical spaces, several methods have been proposed: the ``ghosts method'' \cite{cornish-spergel:99}, the averaging method and the projection method. All these methods are explained and summarized in \cite{lehoucq-weeks-etal:02}.

Our aim is different in that we do not want to compute all the eigenvalues of the Laplace-Beltrami operator, but instead to calculate the H-planforms with the isotropy types listed in theorem \ref{theorem:repirr}. The methods of numerical computation are however similar, and one question is to choose the method  best suited to our goal. For the H-planforms associated to irreductible representations of dimension 1 (i.e. for $(\chi_i)_{i=1\cdots 4}$), we use a desymmetrization of the octagon with a reformulation of the boundary conditions. For H-planforms associated with irreductible representations of dimension $\geq 2$, the desymmetrization of the octagon is also possible but much more complicated as noticed by Balazs and Voros \cite{balazs-voros:86}. This will therefore be the subject of another paper. Here we only identify some H-planforms of specific isotropy types. In order to find these H-planforms, we use the finite-element method with periodic boundary conditions. This choice is dictated by the fact that this method will allow us to compute all the first $n$ eigenmodes and among all these we will indentify those which correspond to a given isotropy group. As explained before, if we have used the direct boundary-element method, we would have reached any eigenmode but the search for H-planforms would also have became more random. Indeed, each iteration of this method gives only one eigenmode while the finite-element method provides $n$ eigenmodes depending on the precision of the discretization. This is why we prefer to use this last method in order to find some H-planforms associated with irreductible representations of dimension $\geq 2$, although it is quite more complicated to implement because of the periodic boundary conditions.

\subsection{Desymmetrization of the octagon}

We have already seen that the fundamental domain $T(2,3,8)$ of the group $G^*$ generates a tiling of the octagon. 
Desymmetrization consists in separating the individual solutions according to the symmetry classes of $G^*$. This entails solving the eigenvalue problem in certain irreducible subregions of the fundamental domain, such as $T(2,3,8)$, using special boundary conditions for these subregions. In effect the periodicity conditions in the original domain (octagon) may produce Dirichlet or Neumann conditions on the boundaries of these subregions. The symmetry group $G^*$ has a smaller fundamental domain, the triangle $T(2,3,8)$ see figure \ref{fig:triangle}, which is $\frac{1}{96}$th of the original octagon. The method of desymmetrization can be applied to many other techniques than finite element methods (see \cite{fassler-stiefel:92}).

We now focus on the four one-dimensional irreductible representations $(\chi_i)_{i=1\cdots 4}$ acting upon the generators as indicated in table \ref{table:caracteres}. One can find in the book of F{\"a}ssler and Stiefel \cite[Chapter 3]{fassler-stiefel:92} the principle of desymmetrization in the context of dihedral symmetry. We follow their method in the case of the symmetry group $G^*$. The first step is to attribute one number (value) to each of the 96 triangles that tesselate the octagon under the action of $G^*$ (see \ref{fig:tesselation}), according to the character values obtained from table \ref{table:caracteres}, i.e. $\pm 1$ depending on the conjugacy class (remember we restrict ourselves to the one dimensional representations $\chi_1$ to $\chi_4$). 

Let us take the example of the first irreductible representation $\chi_1$ and explain how we obtain the domain and the boundary conditions depicted in figure \ref{fig:tableau}. Table \ref{table:caracteres} shows that all 96 triangles end up with the same value, 1.  This means that the eigenfunction we are looking for is \textit{even} under all the 96 elements in $G^*$ and  it follows that it must satisfy Neumann boundary conditions on all the edges of the tesselation of the hyperbolic octagon \cite{balazs-voros:86,aurich-steiner:89}. Finally, it is sufficient to solve the eigenproblem on the reduced domain $T(2,3,8)$ with Neumann boundary conditions on its three edges. For the four one-dimensional representations one has to choose the correct combination of Neumann and Dirichlet boundary conditions as shown in figure \ref{fig:tableau}. 

The representations of dimension $\geq 2$ require the same number of values as their dimension. For example there are two basis vectors determining the function values in the case of an irreductible representation of dimension 2. The table \ref{table:caracteres} is then no longer sufficient to set the values of the function on each triangles and one has to explicitly write the matrices of the irreductible representation in order to obtain the suitable conditions. This is why we have restricted ourselves to the four one-dimensional representations as described in the previous paragraphs. 

\begin{figure}[htp]
\centering
\subfigure[$\chi_1:G^*$.]{
\label{fig:bc1}
\includegraphics[width=0.45\textwidth]{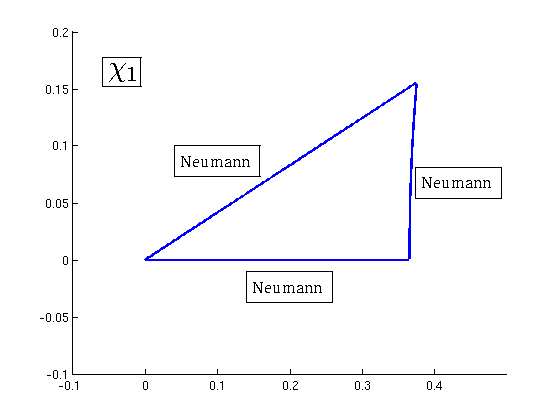}}
\hspace{.3in}
\subfigure[$\chi_2:G_{0\kappa}$.]{
\label{fig:bc2}
\includegraphics[width=0.45\textwidth]{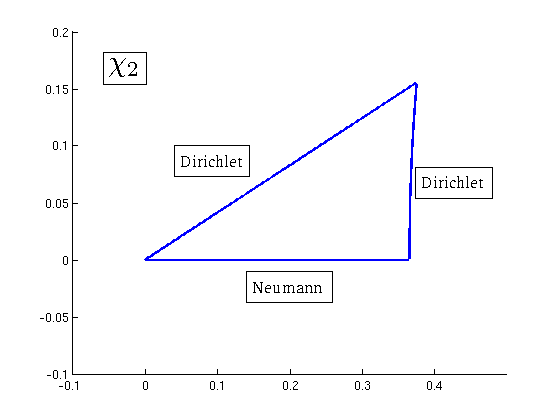}}\\
\subfigure[$\chi_3:G_{0\kappa'}$.]{
\label{fig:bc3}
\includegraphics[width=0.45\textwidth]{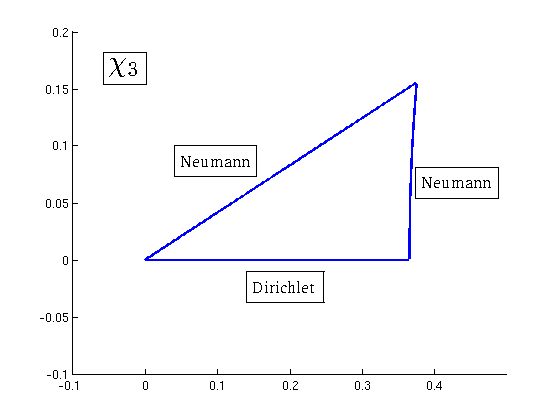}}
\hspace{.3in}
\subfigure[$\chi_4:G$.]{
\label{fig:bc4}
\includegraphics[width=0.45\textwidth]{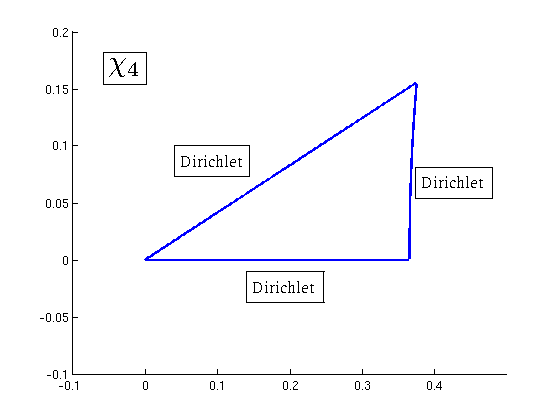}}
\caption{Boundary conditions for the one-dimensional irreducible representations. Top left: boundary conditions for $\chi_1$, corresponding to the isotropy group $G^*$. Top right: boundary conditions for $\chi_2$ corresponding to the isotropy group $G_{O\kappa}$. Bottom left: boundary conditions for $\chi_3$ corresponding to the isotropy group $G_{0\kappa'}$. Bottom right: boundary conditions for $\chi_4$ corresponding to the isotropy group $G$.}
\label{fig:tableau}
\end{figure}

\subsection{Numerical experiments}

As there exists an extensive literature on the finite element methods (see for an overview \cite{ciarlet-lions:91,allaire:05}) and as numerical analysis is not the main goal of this article, we do not detail the method itself but rather focus on the way to actually compute the eigenmodes of the Laplace-Beltrami operator.

\paragraph{Desymmetrized problem :} For the four problems depicted in figure \ref{fig:tableau}, we use the mesh generator \textit{Mesh2D} from \textit{Matlab} to tesselate the triangle $T(2,3,8)$ with 2995 nodes and we implement the finite element method of order 1. Our results are presented in figure \ref{fig:dim1} and are in a good agreement with those obtained by Balazs-Voros in \cite{balazs-voros:86} and Aurich-Steiner in \cite{aurich-steiner:89}. Once we have computed the eigenfunction in $T(2,3,8)$, we extend it to the whole octagon by applying the generators of $G^*$. We superimpose in figure \ref{fig:chi1} the tesselation of the octagon by the 96 triangles in order to allow the reader to see the symmetry class of $G^*$.

\begin{figure}[htbp]
\centering
\subfigure[$\chi_1:G^*$, the corrseponding eigenvalue is $\lambda=23.0790$.]{
\label{fig:chi1}
\includegraphics[width=0.4\textwidth]{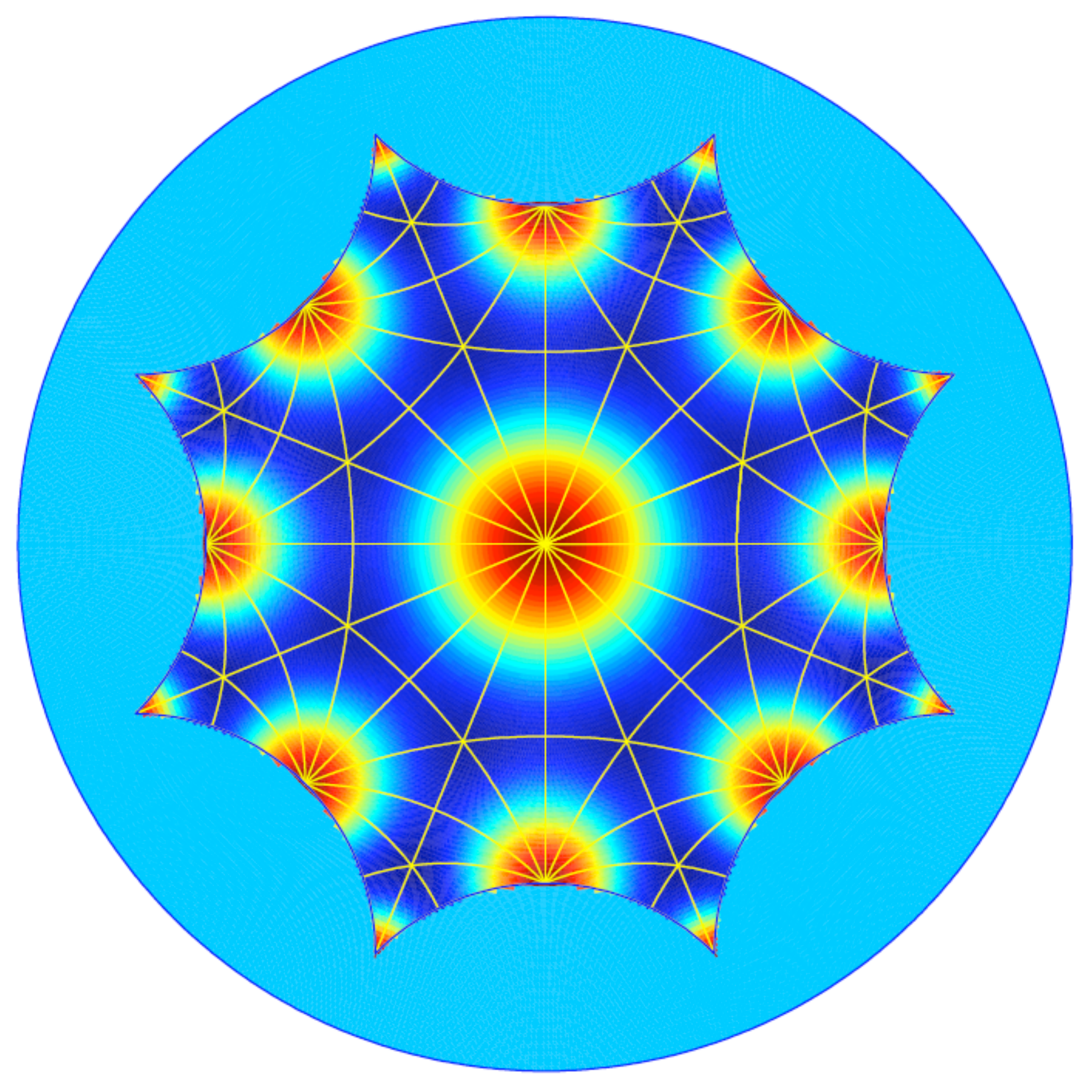}}
\hspace{.3in}
\subfigure[$\chi_2:G_{0\kappa}$, the corrseponding eigenvalue is $\lambda=91.4865$.]{
\label{fig:chi2}
\includegraphics[width=0.4\textwidth]{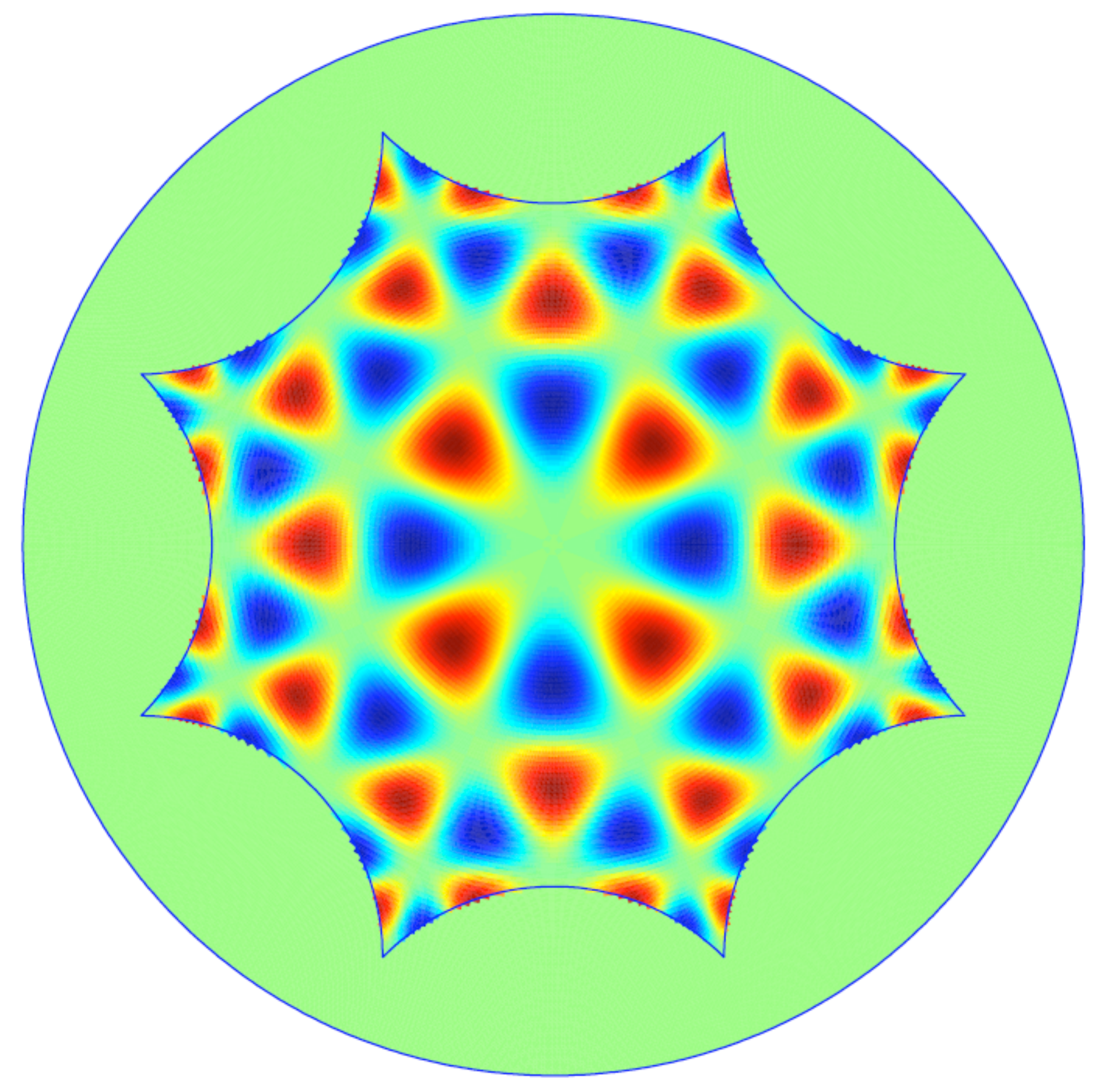}}\\
\subfigure[$\chi_3:G_{0\kappa'}$, the corrseponding eigenvalue is $\lambda=32.6757$.]{
\label{fig:chi3}
\includegraphics[width=0.4\textwidth]{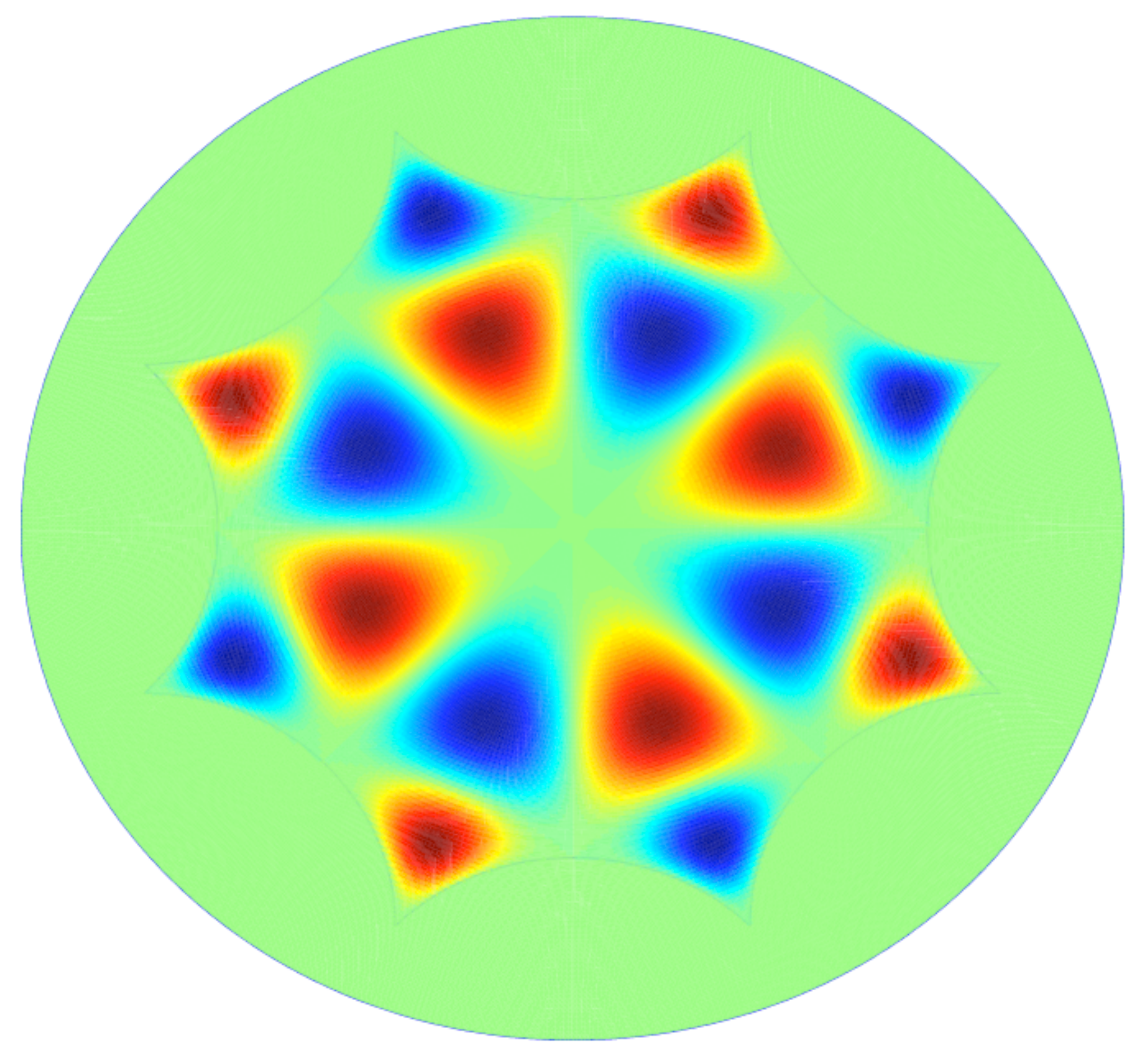}}
\hspace{.3in}
\subfigure[$\chi_4:G$, the corresponding eigenvalue is $\lambda=222.5434$.]{
\label{fig:chi4}
\includegraphics[width=0.4\textwidth]{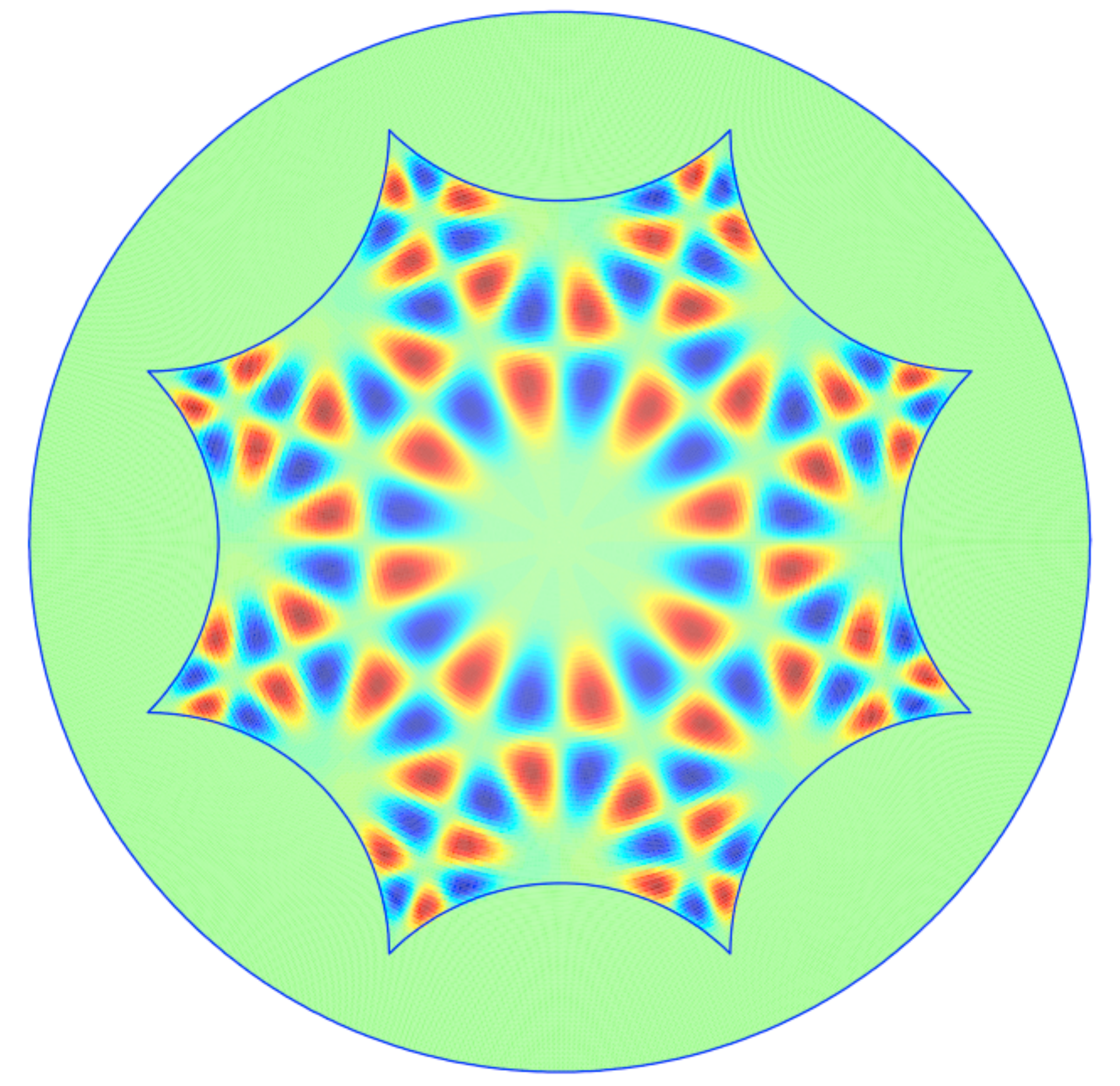}}
\caption{The four H-planforms with their corresponding eigenvalue associated with the four irreductible representations of dimension 1, see text.}
\label{fig:dim1}
\end{figure}

\paragraph{Non desymmetrized problem :} As discussed previously, we also present some H-planforms of higher dimension. We mesh the full octagon with 3641 nodes in such a way that the resulting mesh enjoys a $D_8$-symmetry, see figure \ref{fig:mesh}.

\begin{figure}[htp]
\centering
\includegraphics[width=0.7\textwidth]{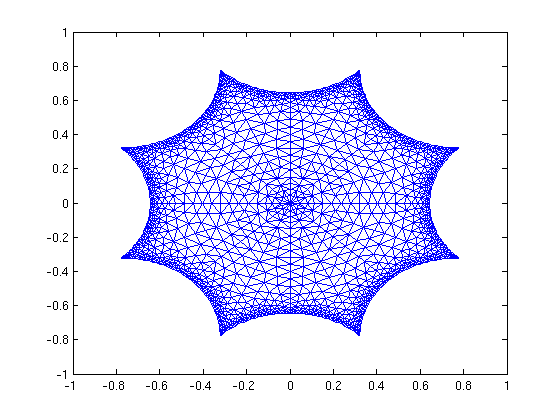}
\caption{A mesh of the octagon with 3641 nodes which is used for the method of finite elements. It leads to matrices of dimension $3641\times3641$.}
\label{fig:mesh}
\end{figure}

We implement, in the finite element method of order 1, the periodic boundary conditions of the eigenproblem and obtain the first 100 eigenvalues of the octagon. Our results are in agreement with those of Aurich and Steiner reported in \cite{aurich-steiner:89}. Instead of giving a table of all the eigenvalues, we prefer to plot the staircase function $N(\lambda)=\sharp\{ \lambda_n | \lambda_n\leq \lambda\}$ for comparison with Weyl's law. Weyl's law is, in its simplest version, a statement on the asymptotic growth of the eigenvalues of the Laplace-Beltrami operator on bounded domains. If $\Omega$ is a given bounded domain of $\R^2$, then the staircase function has the following asymptotic behaviour: 
$N(\lambda)=\frac{| \Omega|}{4\pi}\lambda + o(\lambda)$ as $\lambda \rightarrow \infty$. We recall that in the case of the hyperbolic octagon, one has $|\Omega|=4\pi$ and hence $N(\lambda)\sim \lambda$ as $\lambda \rightarrow \infty$. As can be seen in figure \ref{fig:staircase} the asymptotic law describes the staircase well down to the smallest eigenvalues, which confirms the validity of our numerical results.

\begin{figure}[htp]
\centering
\includegraphics[width=0.7\textwidth]{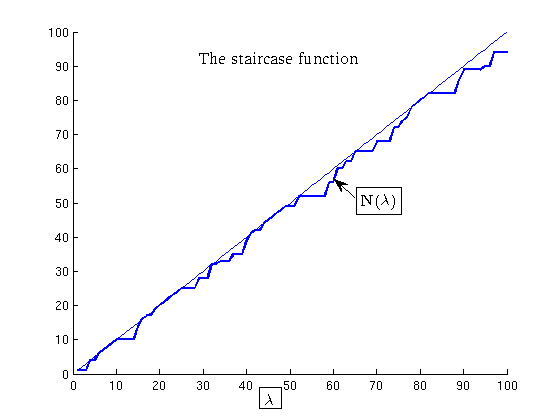}
\caption{The staircase function $N(\lambda)$, in dark, is shown in comparison with Weyl's law $N(\lambda)\sim \lambda$ as $\lambda \rightarrow \infty$.}
\label{fig:staircase}
\end{figure}

We show in figure \ref{fig:D8} two H-planforms, with $\widetilde{D}_8$ and $\widetilde{D}_{8\kappa}$ isotropy respectively. These two H-planforms belong to irreductible representations of dimension 2: $\chi_5$ for \ref{fig:eig73} and $\chi_6$ for \ref{fig:eig10} and \ref{fig:eig2}.

\begin{figure}[htp]
\centering
\subfigure[$\chi_5$ : $\widetilde{D}_8$, the corrseponding eigenvalue is $\lambda=73.7323$.]{
\label{fig:eig73}
\includegraphics[width=0.4\textwidth]{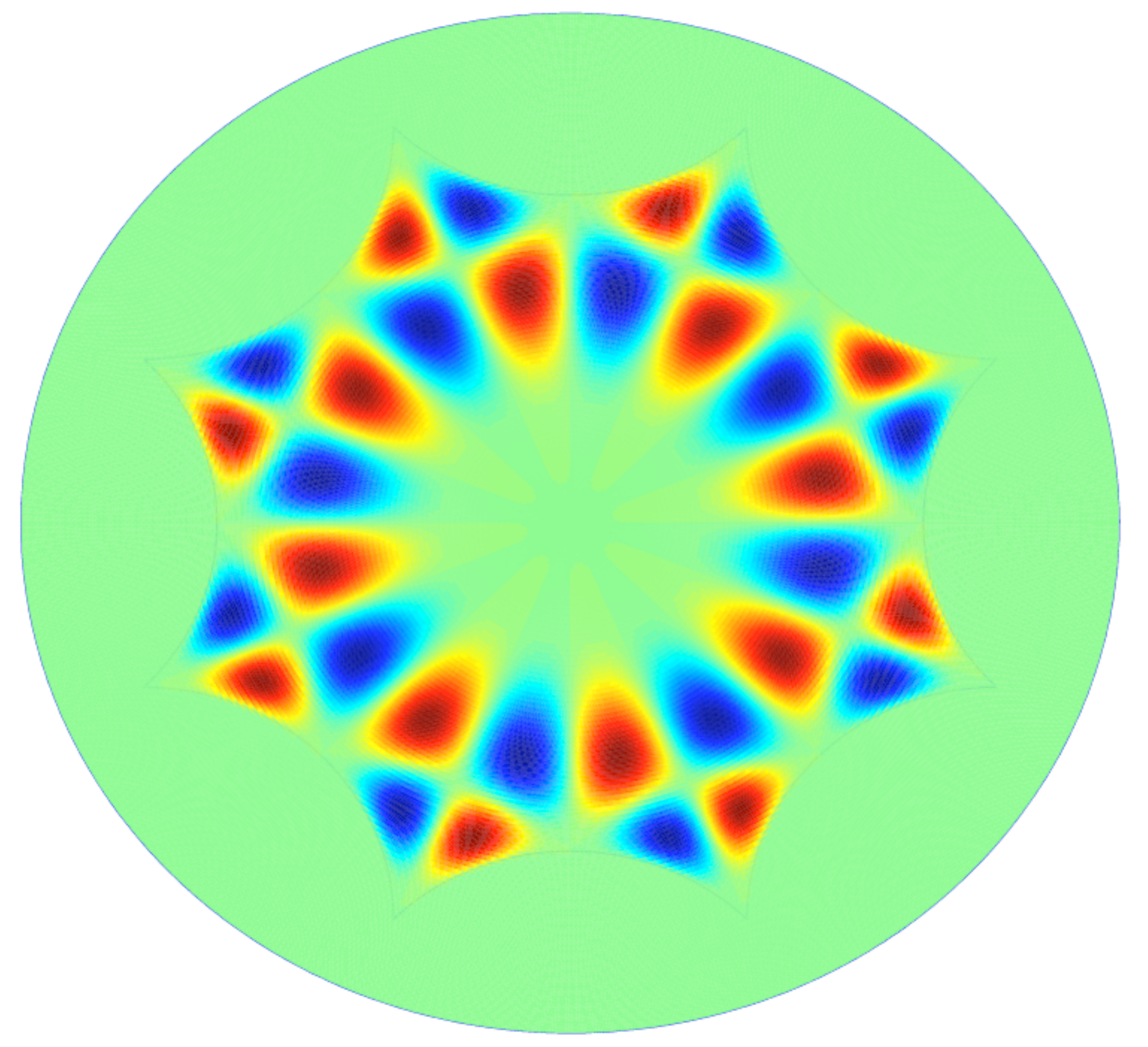}}
\hspace{.3in}
\subfigure[$\chi_6$ : $\widetilde{D}_{8\kappa}$, the corrseponding eigenvalue is $\lambda=8.2501$.]{
\label{fig:eig10}
\includegraphics[width=0.4\textwidth]{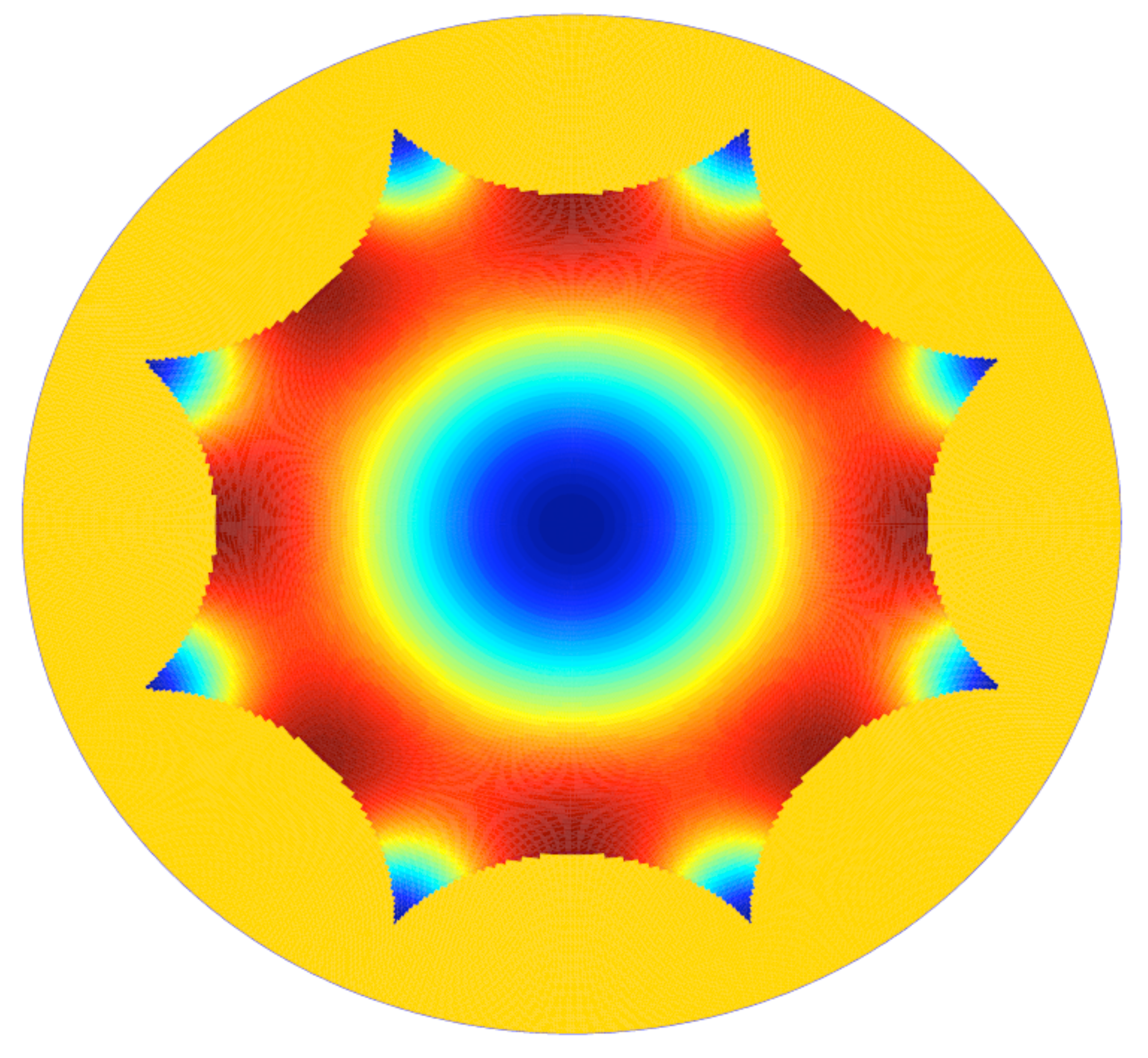}}\\
\caption{Two H-planforms with their corresponding eigenvalue associated to the two irreductible representations of dimension 2, see text.}
\label{fig:D8}
\end{figure}

We finally present in figure \ref{fig:D4} three H-planforms, with $C_{8\kappa}$, $\widetilde{D}_{4\kappa}$ and $\widetilde{D}_{4\kappa'}$ isotropy. These three H-Planforms belong to irreductible representations of dimension 3: $\chi_8$ for \ref{fig:eig2}, $\chi_9$ for \ref{fig:eig28} and $\chi_{10}$ for \ref{fig:eig15}.\\

\begin{figure}[htp]
\centering
\subfigure[$\chi_8$ : $C_{8\kappa}$, the corrseponding eigenvalue is $\lambda=3.8432$.]{
\label{fig:eig2}
\includegraphics[width=0.4\textwidth]{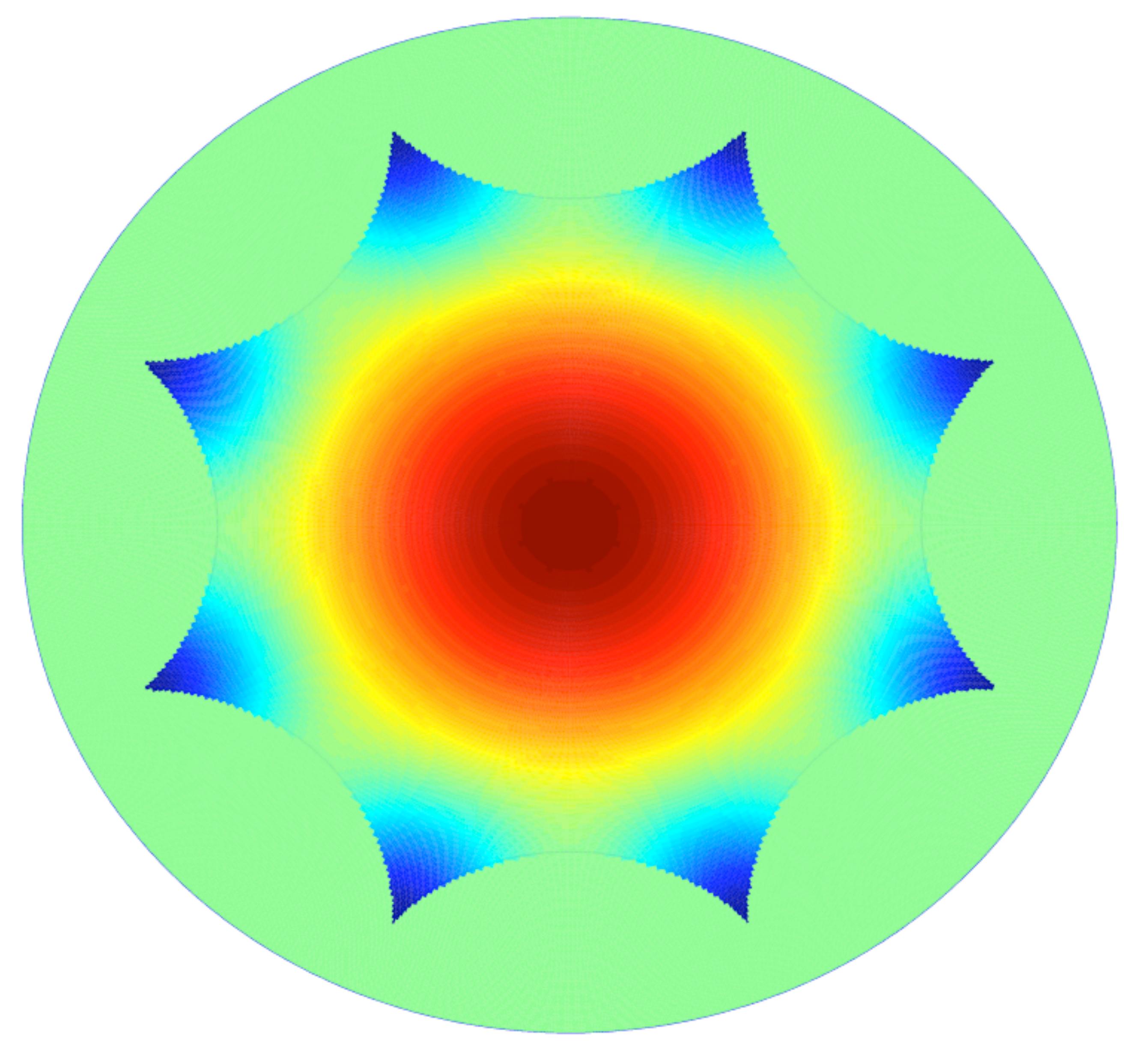}}
\hspace{.3in}
\subfigure[$\chi_9$ : $\widetilde{D}_{4\kappa}$, the corresponding eigenvalue is $\lambda=28.0888$.]{
\label{fig:eig28}
\includegraphics[width=0.37\textwidth]{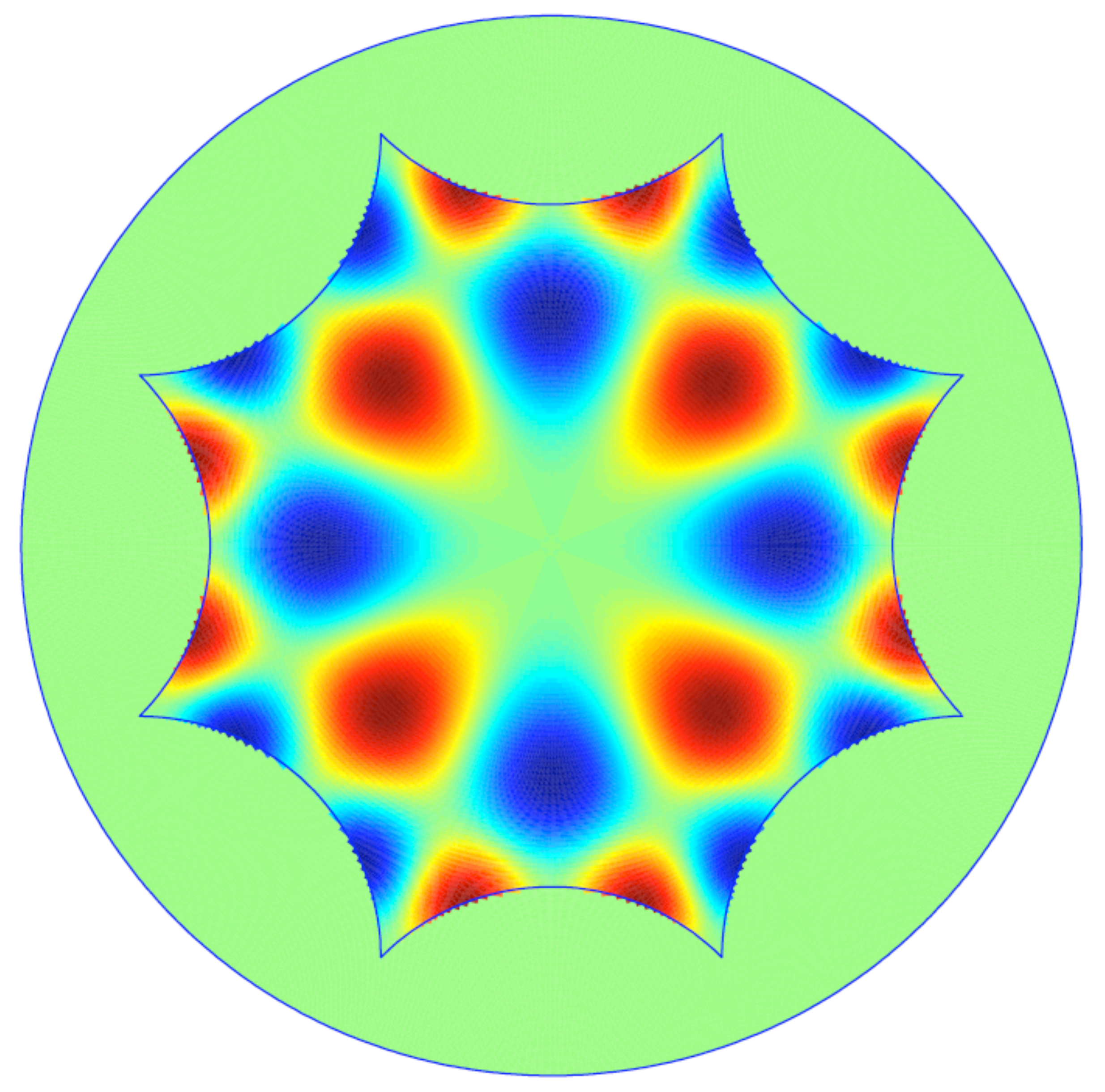}}\\
\subfigure[$\chi_{10}$ : $\widetilde{D}_{4\kappa'}$, the corrseponding eigenvalue is $\lambda=15.0518$.]{
\label{fig:eig15}
\includegraphics[width=0.4\textwidth]{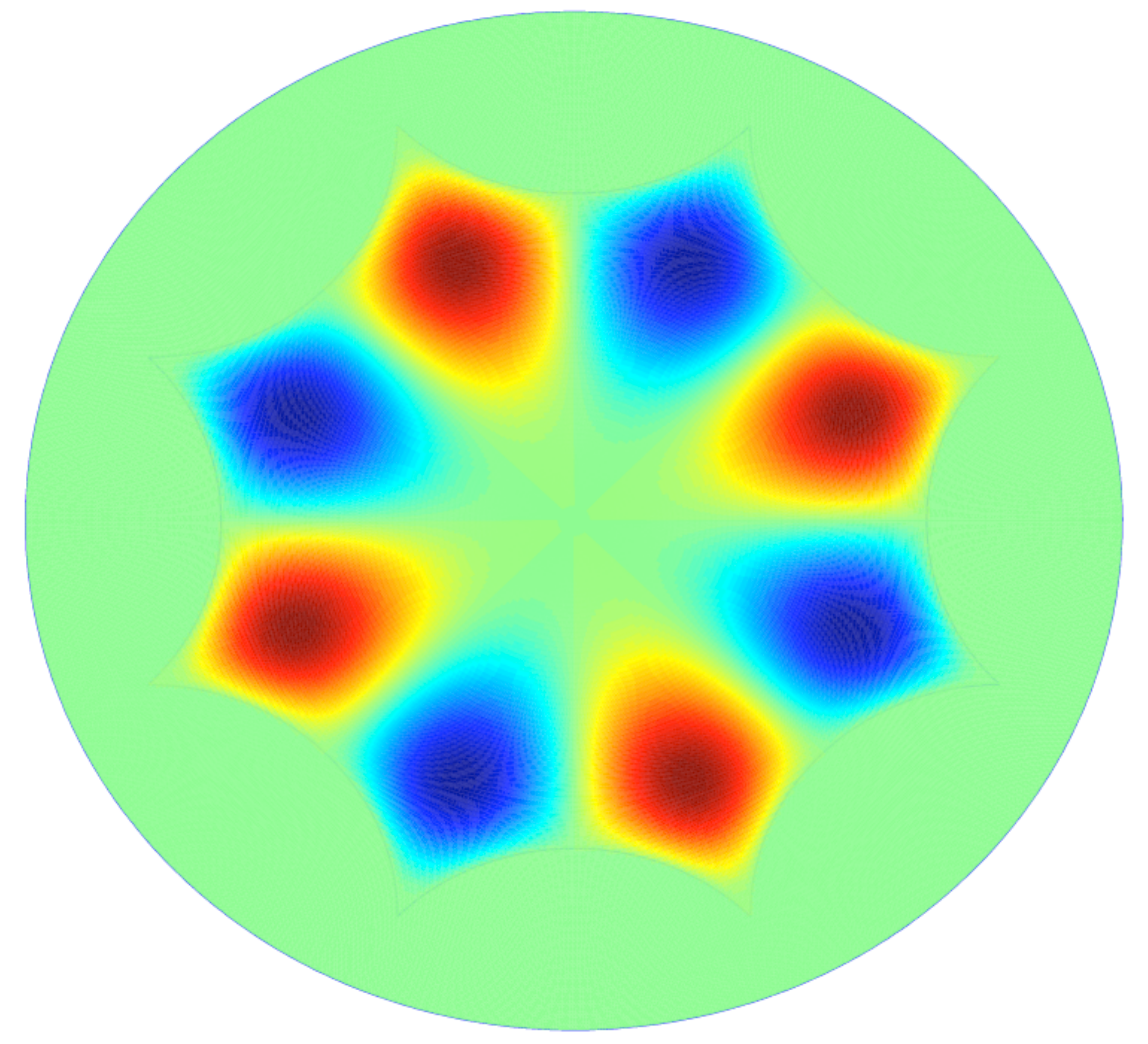}}
\caption{Three H-planforms with their corresponding eigenvalue associated to three irreductible representations of dimension 3, see text.}
\label{fig:D4}
\end{figure}

In figures \ref{fig:dim1},\ref{fig:D8} and \ref{fig:D4}, we have plotted, for convenience, the corresponding H-planforms in the octagon only. Nevertheless, H-planforms are periodic in the Poincar\'e disc, as stated before, and in figure \ref{fig:eigall33}, we plot the H-planform with $G_{0\kappa'}$ isotropy type of figure \ref{fig:chi3}. We recall that the octagonal lattice group $\Gamma$ is generated by the four boosts $g_j$ of subsection \ref{subsection:octlattice}. Then, once the H-planform is calculated, we report it periodically in the whole Poincar\'e disc by the actions of there four boosts and obtain figure \ref{fig:eigall33}. Note that it is arduous to tesselate the entire disc and this is why there remains some untesselated areas in the figure.\\

\begin{figure}[htp]
\centering
\includegraphics[width=0.4\textwidth]{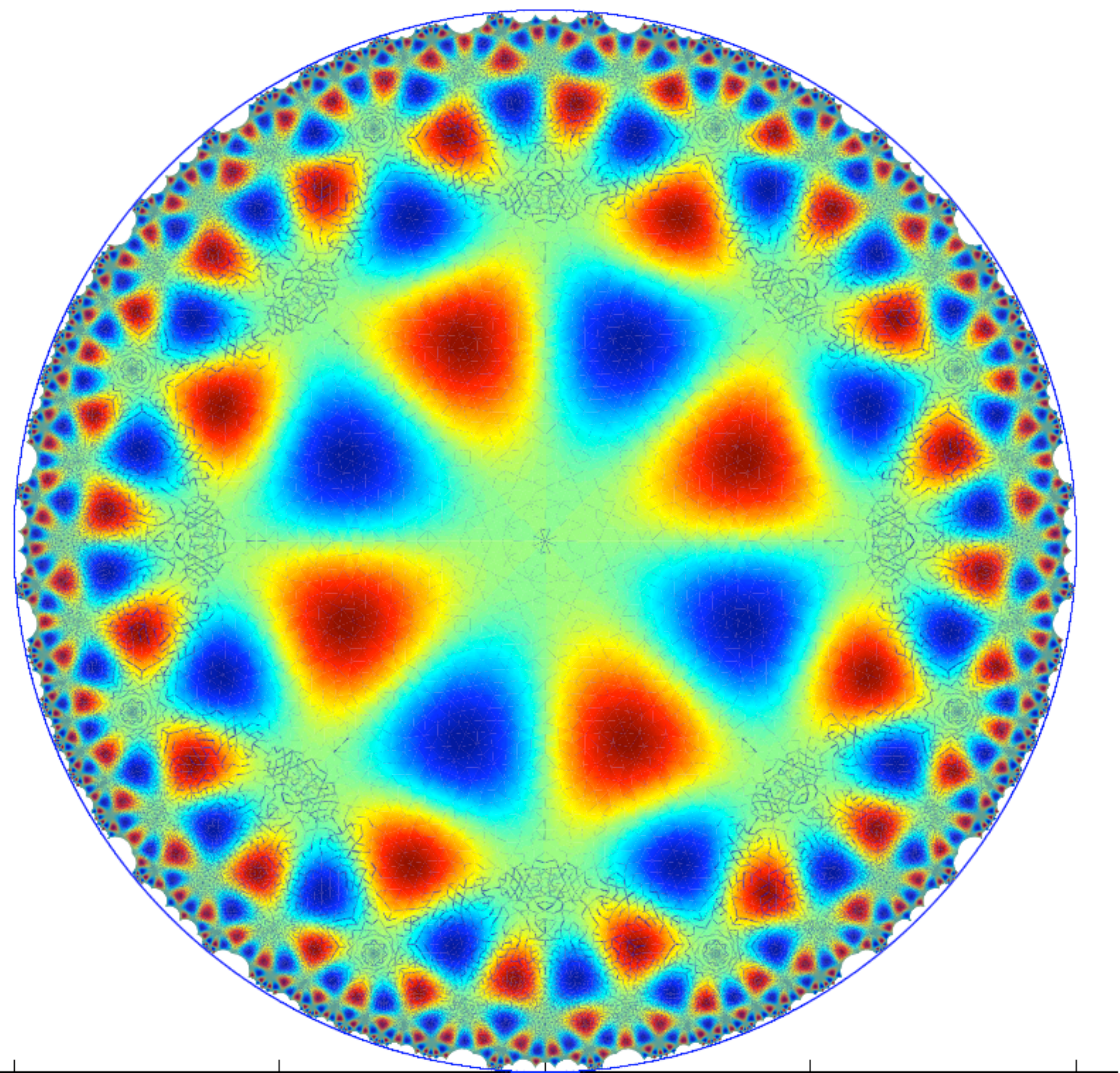}
\caption{Extension of the $G_{0\kappa'}$ H-planform on the Poincar\'e disk.}
\label{fig:eigall33}
\end{figure}

{\em Remark :} The finite element method relies on a variational principle and thus yields upper bounds for the eigenvalues. Note that it is not well-suited to compute high eigenvalues. Thus, if one wants to compute, for example, the first 125 eigenvalues, then the finite element method provides a very good approximation to the true eigenvalues. If however one wants to reach the 2000th eigenvalue by the finite element method we need to use matrices of size at least $10^6\times 10^6$ which is untractable for desktop or laptop computers and requires going to more powerful architectures.
This is why the direct boundary element method should be preferred for such computations (see  \cite{aurich-steiner:93}).


\section{Conclusion}

In this paper, we have analysed the bifurcation of periodic patterns for neural field equations describing the state of a system defined on the space of structure tensors, when these equations are further invariant under the isometries in this space. We have made use of the concept of periodic lattice in $\D$ (Poincar\'e disc) to further reduce the problem to one on a compact Riemann surface $\D/\Gamma$, where $\Gamma$ is a cocompact, torsion-free Fuchsian group. Successfully, we have applied the machinery of equivariant bifurcation theory in the case of an octagonal periodic pattern, where we have been able to classify all possible H-planforms satisfying the hypotheses of the Equivariant Branching Lemma. In the last section, we have described a method to compute these patterns and illustrated it with a selection of images of octagonal H-planforms.

There are several questions which are raised by this analysis.
\begin{itemize}
\item[(i)] The first one is that of the interpretation of these H-planforms for the modelling of the visual cortex.  At this moment we believe that they could be involved in the process of defining texture tuning curves in a way that would resemble the definition of orientation tuning curves in  the related ring model of orientation selectivity \cite{hansel-sompolinsky:97,shriki-hansel-etal:03,ermentrout:98,dayan-abbott:01,bressloff-bressloff-etal:00,bressloff-cowan-etal:01, veltz-faugeras:10}. Another fascinating  possibility is that they could be related to neural illusions caused by the existence of several stable stationary solutions to \eqref{eq:neuralmass} when, e.g., the slope of the sigmoid at the origin becomes larger than that at which several branches of solutions bifurcate from the trivial one. These neural illusions would be functions of membrane potential values that would not correspond to the actual thalamic input and could be expressed as combinations of these H-planforms. This is of course still very speculative but very much worth investigating.
\item[(ii)] The second one is about the observability of such patterns in a natural system or under direct simulation of the evolution equations. Indeed, not only there is a high degeneracy of the bifurcation problem if one removes the assumption that all perturbations respect the periodicity of the pattern, which is also the case for patterns in Euclidean space, but in addition the fact that such patterns would be neutral modes for the bifurcation problem posed in full generality in $\D$ is non generic. We may however imagine mechanisms such as "spatial frequency locking" by which periodic patterns could become stable, hence observable.
\item[(iii)] The third one is that of a more effective computation of H-planforms for a given isotropy type. As we have seen in the last section, a desymmetrization of the domain allowed us to calculate all the H-planforms with isotropy types associated to irreductible representations of dimension 1. For irreductible representations of dimension $\geq 2$ the computation becomes more intricate as the desymmetrization method is no longer straightforward and remains misunderstood. Naturally, one solution would be to elaborate a general algorithm which, for a given isotropy type, computes systematically the associated H-planform. We think that such an algorithm would be of interests for quantum physisists and cosmologists.
\end{itemize}

These questions will be the subject of further studies. \\

\noindent
{\bf Acknowledgments}\\ 
This work was partially funded by the ERC advanced grant NerVi number 227747.

\end{document}